\let\mathnumsetfont\mathbb
\newcommand\Nset{\mathnumsetfont N} 
\newcommand\Rset{\mathnumsetfont R} 
\newcommand\M{{\cal M}}
\newcommand\Oset{{\cal O}}
\newcommand\fty{{ }}
\newcommand\thelr{the L'H\^{o}pital's rule}
\newcommand\pyp{Peter and Paul distribution}
\newtheorem{teo}{Theorem}[section]
\newtheorem{teo*}{Theorem}
\newtheorem{prop}{Proposition}[section]
\newtheorem{cor}{Corollary}[section]
\newtheorem{rmq}{Remark}[section]
\newtheorem{defi}{Definition}[section]
\newtheorem{exm}{\emph{Example}}[section]
\newtheorem{lem}{Lemma}[section]
\newtheorem{propt}{Properties}[section]
\newcommand\PBdH{Pickands-Balkema-de~Haan theorem}
\def\limsup{\mathop{\overline{\mathrm{lim}}}}
\def\liminf{\mathop{\underline{\mathrm{lim}}}}
\def\limx{\lim_{x\rightarrow\infty}}
\def\limy{\lim_{y\rightarrow\infty}}
\def\limn{\lim_{n\rightarrow\infty}}
\def\limsupx{\limsup_{x\rightarrow\infty}}
\def\liminfx{\liminf_{x\rightarrow\infty}}
\def\barF{\overline{F}}
\title{An extension of the class of regularly varying functions}
\author{Meitner Cadena\thanks{UPMC Paris 6 \& CREAR, ESSEC Business School;\, E-mail: meitner.cadena@etu.upmc.fr or b00454799@essec.edu
}\;\; and\,  Marie Kratz\thanks{ESSEC Business School, CREAR risk research center; \, E-mail: kratz@essec.edu $\qquad\qquad\qquad\qquad\qquad\qquad$ Marie Kratz is also member of MAP5, UMR 8145, Univ. Paris Descartes, France.  }} 
\date{}
\begin{document}

\maketitle

\begin{abstract}
We define a new class of positive and Lebesgue measurable functions in terms of their asymptotic behavior, which includes the class of regularly varying functions. We also characterize it by transformations, corresponding to generalized moments when these functions are random variables. We study the properties of this new class and discuss their applications to Extreme Value Theory. 

\vspace{2ex}

{\it Keywords: asymptotic behavior, domains of attraction; extreme value theory; Karamata's representation theorem; Karamata's theorem; Karamata's tauberian theorem; measurable functions; von Mises' conditions; Peter and Paul distribution; regularly varying function} 

\vspace{2ex}

{\it AMS classification}:  26A42; 60F99; 60G70
\end{abstract}

\section*{Introduction}

The field of Extreme Value Theory (EVT) started to be developed in the 20's, concurrently with the development of modern probability theory by Kolmogorov, 
with the pioneers Fisher and Tippett (1928) who introduced the fundamental theorem of EVT, the Fisher-Tippett Theorem, giving three types of  limit distribution 
for the extremes (minimum or maximum).
A few years later, in the 30's, Karamata defined the notion of slowly varying and regularly varying (RV) functions, describing a specific asymptotic behavior of these functions, namely:\\
{\it Definition.} A Lebesgue-measurable function $U:\Rset^+\rightarrow\Rset^+$ is RV at infinity if,  for all $t>0$,
\begin{equation}\label{eq:000rv}
\lim_{x\rightarrow\infty}\frac{U(xt)}{U(x)}=t^{\rho}\quad\textrm{for some $\rho\in\Rset$},
\end{equation}
$\rho$ being called the tail index of $U$, and the case $\rho=0$ corresponding to the notion of slowly varying function.
$U$ is RV at $0^+$ if \eqref{eq:000rv} holds taking the limit $x\to0^+$. \\
We had to wait for more than one decade to see links appearing between EVT and RV functions.  
Following the earlier works by Gnedenko (see \cite{Gnedenko1943}), 
then Feller (see \cite{feller21966}), who characterized the domains of attraction of Fr\'echet and Weibull using RV functions at infinity, without using Karamata theory in the case of Gnedenko, 
de Haan (1970) generalized the results using Karamata theory and  completed it, providing a complete solution for the case of Gumbel limits. Since then, much work has been developed on EVT and RV functions,  in particular in the multivariate case with the notion of multivariate regular variation (see e.g. \cite{deHaan}, \cite{deHaanFerreira}, \cite{resnick3}, \cite{Resnick2004}, and references therein).

Nevertheless, the RV class may still be restrictive, particularly in practice. If the limit in \eqref{eq:000rv} does not exist, all standard results given for RV functions and used in EVT, as e.g. Karamata theorems, Von Mises conditions, etc...,  cannot be applied.
Hence the natural question of extending this class and  EVT characterizations, for broader applications in view of (tail) modelling. 

We answer this concern in real analysis and EVT, constructing a (strictly) larger class of functions than the RV  class on  which
we generalize EVT results and provide conditions easy to check in practice. 

The paper is organized in two main parts. The first section defines our new large class of functions described in terms of their asymptotic behaviors, which may violate  \eqref{eq:000rv}.
It provides its algebraic properties, as well as characteristic representation theorems, one being of Karamata type.  
In the second section, we discuss extensions for this class of functions of other important Karamata theorems,  and end with results on domains of attraction.
Proofs of the results are given in the appendix.

This study is the first of a series of two papers, extending the class of regularly varying functions. It addresses the probabilistic analysis of our new class. The second paper will treat the statistical aspect of it.

\section{Study of a new class of functions}

We focus on the new class $\M$ of positive and measurable functions with support $\Rset^+$, characterizing their behavior at $\infty$ with respect to polynomial functions.
A number of properties of this class are studied and characterizations are provided.
Further, variants of this class,  considering asymptotic behaviors of exponential type instead of polynomial one, provide other classes, denoted by $\M_\infty$ and $\M_{-\infty}$, having  similar properties and characterizations as $\M$ does.

Let us introduce a few notations.

When using limits, we will discriminate between existing limits, namely finite or infinite ($\infty$, $-\infty$) ones, and not existing ones.

The notation a.s. (almost surely) in (in)equalities concerning measurable functions is omitted.
Moreover, for any random variable (rv) $X$, we denote its distribution by $F_X(x)=P(X\leq x)$, and its tail of distribution by $\barF_X=1-F_X$.
The subscript $X$ will be omitted when no possible confusion.

RV (RV$_\rho$ respectively) denotes indifferently the class of regularly varying functions (with tail index $\rho$, respectively) or the property of regularly varying function (with tail index $\rho$).

Finally recall the notations $\min(a,b)=a\wedge b$ and $\max(a,b)=a\vee b$ that will be used, $\left\lfloor x\right\rfloor$ for the largest integer not greater than $x$
and $\left\lceil x\right\rceil$ for the lowest integer greater or equal than $x$, and $\log(x)$ represents the natural logarithm of $x$.

\subsection{The class $\mathcal{M}$} 

We introduce a new class $\mathcal{M}$ that we define as follows.

\begin{defi}\label{eq:main:defi:classM}
$\mathcal{M}$ is the class of positive and measurable functions $U$ with support $\Rset^+$, bounded on finite intervals, such that 
\begin{equation}\label{Mkappa}
\exists ~\rho\in\Rset, \, \forall \varepsilon > 0,\;  \lim_{x\rightarrow\infty}\frac{U(x)}{x^{\rho+\epsilon}}=0\text{\quad and\quad}\lim_{x\rightarrow\infty}\frac{U(x)}{x^{\rho-\epsilon}}=\infty\,.
\end{equation}
\end{defi}

On $\M$, we can define specific properties.
\begin{propt}\label{propt:main:001}~

\begin{itemize}
\item[(i)]
For any $U\in\M_\fty$, $\rho$ defined in (\ref{Mkappa}) is unique, and denoted by $\rho_U$.

\item[(ii)]
Let $U,V\in\mathcal{M}$ s.t. 
$\rho_U>\rho_V$.
Then $\displaystyle \lim_{x\rightarrow\infty}\frac{V(x)}{U(x)}=0$.

\item[(iii)]
For any  $U,V\in\M$ and any $a\ge 0$,  $aU+V\in\M$ with $\rho_{aU+V}=\rho_U\vee\rho_V$. 

\item[(iv)]  If $U\in\M$ with $\rho_U$ defined in (\ref{Mkappa}), then $1/U\in\M_\fty$ with $\rho_{1/U}=-\rho_U$.

\item[(v)]
Let $U\in\M$ with $\rho_U$ defined in \eqref{Mkappa}.
If $\rho_U<-1$, then $U$ is integrable on $\Rset^+$, whereas, if $\rho_U>-1$, $U$ is not integrable on $\Rset^+$.\\
Note that in the case $\rho_U=-1$, we can find examples of functions $U$ which are integrable or not.
\item[(vi)]
\emph{Sufficient condition for $U$ to belong to $\M$:}
Let $U$ be a positive and measurable function  with support $\Rset^+$, bounded on finite intervals. Then
$$
-\infty < \lim_{x\rightarrow\infty} \frac{\log\left(U(x)\right)}{\log(x)} <\infty \quad  \Longrightarrow \quad  U\in \mathcal{M}
$$
\end{itemize}
\end{propt}

To simplify the notation, when no confusion is possible, we will denote $\rho_U$ by $\rho$.

\begin{rmq} {\it Link to the notion of stochastic dominance}

Let $X$ and $Y$ be rv's with distributions $F_X$ and $F_Y$, respectively, with support $\Rset^+$.
We say that $X$ is smaller than $Y$ in the usual stochastic order (see e.g. \cite{shaked})  if
\begin{equation}\label{eq:20140926:001}
\barF_X(x)\leq\barF_Y(x)\quad\textrm{for all $x\in\Rset^+$.}
\end{equation}
This relation is also interpreted as the first-order stochastic dominance of $X$ over $Y$, as $F_X\ge F_Y$ (see e.g. \cite{HadarRussell1971}).

Let $X$, $Y$ be rv's such that $\barF_X=U$ and $\barF_Y=V$, where $U, V\in \M$ and  $\rho_U>\rho_V$. Then Properties~\ref{propt:main:001}, (ii), implies that there exists $x_0>0$ such that, for any $x\geq x_0$, $V(x)< U(x)$, hence that \eqref{eq:20140926:001} is satisfied at infinity, i.e.  that $X$ strictly dominates $Y$ at infinity.

Furthermore, the previous proof shows that a relation like \eqref{eq:20140926:001} is satisfied at infinity for any functions $U$ and $V$ in $\M$ satisfying $\rho_U>\rho_V$.
It means that the notion of first-order stochastic dominance or stochastic order confined to rv's can be extended to functions in $\M$.
In this way, we can say that if $\rho_U>\rho_V$, then $U$ strictly dominates $V$ at infinity.
\end{rmq}


Now let us define, for any positive and measurable function $U$ with support $\Rset^+$,
\begin{equation}\label{eq:main:000c}
\kappa_U:=\sup\left\{r:r\in\Rset\textrm{ \ \ and \ \ }\int_1^{\infty}x^{r-1}U(x)dx<\infty\right\}\textrm{.}
\end{equation}
Note that $\kappa_U$ may take values $\pm \infty$.

\begin{defi}\label{eq:main:defi:kappa}
For $U\in\mathcal{M}$, $\kappa_U$ defined in \eqref{eq:main:000c} is called the $\mathcal{M}$-index of $U$.
\end{defi}

\begin{rmq}\label{rmq:20140725:001}~
\begin{enumerate}
\item
If the function $U$ considered in \eqref{eq:main:000c} is bounded on finite intervals, then the integral involved can be computed on any interval $[a,\infty)$ with $a>1$.

\item When assuming $U=\overline{F}$, $F$ being a continuous distribution, the integral in (\ref{eq:main:000c}) reduces (by changing the order of integration), for $r>0$, 
to an expression of moment of a  rv: 
$$
\int_1^{\infty}x^{r-1}\overline{F}(x)dx=\frac{1}{r}\int_1^{\infty}\left(x^r-1\right)dF(x)=\frac{1}{r}\int_1^{\infty}x^rdF(x)-\frac{\barF(1)}{r}\textrm{.}
$$
\item We have $\kappa_U \geq 0$ for any tail $U=\overline{F}$ of a distribution $F$.\\

Indeed, suppose there exists $\overline{F}$ such that $\kappa_{\overline F}<0$. Let us denote $\kappa_{\overline F}$ by $\kappa$. 
Since $\kappa<\kappa/2<0$, we have by definition of $\kappa$ that
$\displaystyle \int_1^{\infty}x^{\kappa/2-1}\overline{F}(x)dx=\infty $.
But, since $\overline F\le 1$ and $\kappa/2-1<-1$, we can also write that
$\displaystyle \int_1^{\infty}x^{\kappa/2-1}\overline{F}(x)dx\leq\int_1^{\infty}x^{\kappa/2-1}dx<\infty $. 
Hence the contradiction. 

\item
A similar statement to Properties \ref{propt:main:001}, (iii), has been proved for RV functions (see \cite{BinghamGoldieTeugels}).
\end{enumerate}
\end{rmq}

Let us develop a simple example, also useful for the proofs.

\begin{exm}\label{lem:main:003}
Let $\alpha\in\Rset$ and $U_\alpha$ the function defined on $(0,\infty)$ by
$$
U_\alpha(x):=\left\{
\begin{array}{lcl}
1\text{,} &  & 0< x<1 \\
x^\alpha\text{,} &  & x\geq1\,.
\end{array}
\right.
$$

Then $U_\alpha\in\mathcal{M}$ with $\rho_{U_\alpha}=\alpha$ defined in (\ref{Mkappa}), and its $\mathcal{M}$-index satisfies $\kappa_{U_\alpha}=-\alpha$.
\end{exm}

To check that $U_\alpha\in\mathcal{M}$, it is enough to find a $\rho_{U_\alpha}$, since its unicity follows by Properties \ref{propt:main:001}, (i).
Choosing  $\rho_{U_\alpha}=\alpha$, we obtain, for any $\epsilon>0$, that
$$
\lim_{x\rightarrow\infty}\frac{U_\alpha(x)}{x^{\rho_{U_\alpha}+\epsilon}}=
\lim_{x\rightarrow\infty}\frac{1}{x^{\epsilon}}=0
\quad\text{and} \quad
\lim_{x\rightarrow\infty}\frac{U_\alpha(x)}{x^{\rho_{U_\alpha}-\epsilon}}=
\lim_{x\rightarrow\infty}x^{\epsilon}=\infty
$$
Hence $U_\alpha$ satisfies (\ref{Mkappa}) with $\rho_{U_\alpha}=\alpha$.\\
Now, noticing that
$$
\int_1^{\infty}x^{s-1}U_\alpha(x)dx=\int_1^{\infty}x^{s+\alpha-1}dx <\infty 
\quad \Longleftrightarrow \quad s+\alpha<0 
$$
then it comes that  $\kappa_{U_\alpha}$ defined  in \eqref{eq:main:000c} satisfies $\kappa_{U_\alpha}=-\alpha$.\hfill $\Box$

As a consequence of the definition of the $\M$-index $\kappa$ on $\mathcal{M}$, we can prove that Properties \ref{propt:main:001}, (vi), is not only a sufficient but also a necessary condition,
obtaining then a first characterization of $\mathcal{M}$.
\begin{teo}\label{teo:main:001} \textbf{First characterization of $\mathcal{M}$} \\
Let $U$ be a positive measurable function with support $\Rset^+$ and bounded on finite intervals. Then 
\begin{equation}\label{eq:main:000b}
U\in\mathcal{M} \,\text{with} \, \rho_U= - \tau \quad\Longleftrightarrow \quad\lim_{x\rightarrow\infty} \frac{\log\left(U(x)\right)}{\log(x)}= - \tau
\end{equation}
where $\rho_U$ is defined in \eqref{Mkappa}.
\end{teo}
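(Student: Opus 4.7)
The statement is a direct equivalence between two asymptotic descriptions of $U$, and I would prove it by translating between the multiplicative scale of Definition~\ref{eq:main:defi:classM} and the logarithmic scale of the right-hand side of \eqref{eq:main:000b} via $\exp$ and $\log$. Throughout I would restrict attention to $x>1$ so that $\log x>0$, which lets me divide inequalities by $\log x$ without flipping them. Positivity of $U$ guarantees that $\log U(x)$ is well defined (though possibly negative), which is the only analytic prerequisite.

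For the implication $(\Leftarrow)$, I would fix $\varepsilon>0$ and use the assumed limit to obtain, for all sufficiently large $x$, the bound $|\log U(x)/\log x + \tau|<\varepsilon/2$. Exponentiating gives $x^{-\tau-\varepsilon/2}<U(x)<x^{-\tau+\varepsilon/2}$. Dividing by $x^{-\tau+\varepsilon}$ (respectively by $x^{-\tau-\varepsilon}$) introduces the factor $x^{-\varepsilon/2}\to 0$ (respectively $x^{\varepsilon/2}\to\infty$), which is exactly the two limits appearing in \eqref{Mkappa} with $\rho_U=-\tau$. Combined with the standing assumption that $U$ is positive, measurable, and bounded on finite intervals, this places $U$ in $\mathcal{M}$ with the announced index.

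For the converse $(\Rightarrow)$, I would run the argument backwards. Given $U\in\mathcal{M}$ with $\rho_U=-\tau$ and $\varepsilon>0$, the two limits of \eqref{Mkappa} imply that eventually $x^{-\tau-\varepsilon}<U(x)<x^{-\tau+\varepsilon}$ (since the first quotient is eventually $<1$ and the second $>1$). Taking logarithms and dividing by $\log x>0$ yields $-\tau-\varepsilon<\log U(x)/\log x<-\tau+\varepsilon$ for all sufficiently large $x$, and since $\varepsilon$ was arbitrary, $\log U(x)/\log x\to-\tau$.

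The main ``obstacle'' is purely cosmetic bookkeeping: the one small trick is to use $\varepsilon/2$ rather than $\varepsilon$ in the sufficiency step, so that a strictly decaying/diverging power $x^{\mp\varepsilon/2}$ survives after dividing by $x^{-\tau\pm\varepsilon}$, and one must keep $x>1$ to control the sign of $\log x$. No further analytic subtlety intervenes; the content of the theorem is simply that membership in $\mathcal{M}$ is precisely a statement about the asymptotic slope of $U$ on the log-log scale, strengthening the one-sided sufficient condition of Properties~\ref{propt:main:001}, (vi), to an equivalence.
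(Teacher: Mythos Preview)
Your proof is correct and follows essentially the same approach as the paper: the $(\Leftarrow)$ direction is exactly the paper's proof of Properties~\ref{propt:main:001},~(vi) (with the same $\varepsilon/2$ trick), and for $(\Rightarrow)$ both arguments extract power-function bounds on $U$ from the defining limits \eqref{Mkappa} and then take logarithms and divide by $\log x$. The only cosmetic difference is that the paper routes the bound $U(x)/x^{\rho_U+\epsilon}\to 0$ through Properties~\ref{propt:main:001},~(ii) and Example~\ref{lem:main:003} before taking logs, whereas you invoke the definition directly; your version is cleaner but not materially different.
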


\begin{exm}
The function $U$ defined by $U(x)=x^{\sin(x)}$ does not belong to $\M$ since the limit expressed in (\ref{eq:main:000b}) does not  exist . 
\end{exm}

Other properties on $\M$ can be deduced from Theorem \ref{teo:main:001}, namely:

\begin{propt}\label{propt:20140630}
Let $U$, $V$ $\in$ $\M$ with $\rho_U$ and $\rho_V$ defined in \eqref{Mkappa}, respectively. Then:
\begin{itemize}
\item[(i)]
The product \ $U\,V\in\M$ with $\rho_{U\,V}=\rho_U+\rho_V$.
\item[(ii)]
If $\rho_U\leq\rho_V<-1$ or $\rho_U<-1<0\leq\rho_V$, then the convolution $U\ast V\in\M$ with $\rho_{U\ast V}=\rho_V$.
If $-1<\rho_U\leq\rho_V$, then $U\ast V\in\M$ with $\rho_{U\ast V}=\rho_U+\rho_V$+1.
\item[(iii)]
If $\displaystyle \limx V(x)=\infty$, then $U\circ V\in\M$ with $\rho_{U\circ V}=\rho_U\,\rho_V$.
\end{itemize}
\end{propt}

\begin{rmq}\label{rmq:20140724:001}~
A similar statement to Properties \ref{propt:20140630}, (ii), has been proved when restricting the functions $U$ and $V$ to RV probability density functions, showing first $\displaystyle \limx\frac{U\ast V(x)}{U(x)+V(x)}=1$ (see \cite{BinghamGoldieOmey2006}).
In contrast, we propose a direct proof, under  the condition of integrability of  the function of $\M$ having the lowest $\rho$.

When $U$ and $V$ are tails of distributions belonging to RV, with the same tail index, Feller (\cite{feller21966}) proved that the convolution of $U$ and $V$ also belongs to this class and has the same tail index as $U$ and $V$.

\end{rmq} 

We can give a second way to characterize $\mathcal{M}$ using $\kappa_U$ defined in \eqref{eq:main:000c}.

\begin{teo}\label{teo:main:002}
\textbf{Second characterization of $\mathcal{M}$} \\
Let $U$ be a positive measurable function with support $\Rset^+$, bounded on finite intervals. Then
\begin{eqnarray}
U\in\mathcal{M_\fty} \, \text{with associated\ } \, \rho_U &\Longleftrightarrow  & \kappa_U=-\rho_U \quad\label{teo2a}
\end{eqnarray}
where $ \rho_U$ satisfies \eqref{Mkappa} and $\kappa_U$ satisfies \eqref{eq:main:000c}.
\end{teo}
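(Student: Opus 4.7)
My plan is to establish both implications by leveraging Theorem~\ref{teo:main:001}, which reformulates membership in $\M$ as the logarithmic limit $\log U(x)/\log x\to\rho_U$. The forward direction reduces to a two-sided comparison of $\int_1^\infty x^{r-1}U(x)\,dx$ against a pure power integral, while the backward direction pulls the same comparison in reverse, combined with the uniqueness of $\rho_U$ in Properties~\ref{propt:main:001}(i).

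For the forward direction, assume $U\in\M$ with associated $\rho_U$. Theorem~\ref{teo:main:001} provides, for every $\varepsilon>0$, some $x_\varepsilon\ge 1$ such that $x^{\rho_U-\varepsilon}\le U(x)\le x^{\rho_U+\varepsilon}$ for all $x\ge x_\varepsilon$. I would split $\int_1^\infty x^{r-1}U(x)\,dx$ at $x_\varepsilon$, using local boundedness of $U$ on $[1,x_\varepsilon]$, and sandwich the tail between $\int_{x_\varepsilon}^\infty x^{r-1+\rho_U-\varepsilon}\,dx$ and $\int_{x_\varepsilon}^\infty x^{r-1+\rho_U+\varepsilon}\,dx$. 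Choosing $r<-\rho_U-\varepsilon$ makes the upper comparison integrable, so $\kappa_U\ge-\rho_U-\varepsilon$; choosing $r>-\rho_U+\varepsilon$ makes the lower comparison divergent, so $\kappa_U\le-\rho_U+\varepsilon$. Sending $\varepsilon\to 0$ yields $\kappa_U=-\rho_U$.

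For the backward direction, suppose a positive measurable $U$, bounded on finite intervals, satisfies $\kappa_U=-\rho_U$ for some real $\rho_U$. The plan is to show $\log U(x)/\log x\to\rho_U$ and conclude via Theorem~\ref{teo:main:001}. If, for some $\delta>0$, the set where $\log U(x)/\log x\ge\rho_U+\delta$ were heavy enough to make $\int x^{r-1}U(x)\,dx$ diverge for some $r<-\rho_U$, we would contradict $\kappa_U=-\rho_U$; a symmetric argument would handle deviations downward. The uniqueness statement of Properties~\ref{propt:main:001}(i) then pins down the associated exponent, giving $U\in\M$ with associated $\rho_U$.

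I expect the principal obstacle to be exactly this converse step, since translating an integral-convergence threshold into a pointwise log-asymptotic is a Tauberian-style passage that is not automatic for arbitrary measurable $U$. The delicate point is quantifying how much measure on sets where $\log U/\log x$ strays from $\rho_U$ is compatible with $\kappa_U=-\rho_U$; sparse excursions may not be visible to the integral (especially on the ``too small'' side, where local dips actually help integrability), and the argument must either exclude or absorb such configurations. The forward direction, by contrast, is a direct estimate using the bounds handed over by Theorem~\ref{teo:main:001}, and should consume only a short paragraph of the proof.
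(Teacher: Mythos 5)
Your forward direction is sound and matches the paper's Lemma~\ref{lem:main:004}: invoke Theorem~\ref{teo:main:001} to obtain two-sided power bounds on $U$ and sandwich the tail of $\int_1^\infty x^{r-1}U(x)\,dx$ against pure power integrals. (You are even a touch more careful than the paper, which writes out only the upper estimate $U(x)\le x^{\rho_U+\epsilon}$ and leaves the divergence half implicit before concluding $\kappa_U=-\rho_U$.)

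The backward direction is where you have misread the statement, and your own misgivings were the correct signal. The right-hand side of \eqref{teo2a} carries the clause ``where $\rho_U$ satisfies \eqref{Mkappa},'' which already asserts $U\in\M$; the paper therefore dispatches the converse in one line (``the sufficient condition follows from the assumption that $\rho_U$ satisfies \eqref{Mkappa}''). You instead read it as the Tauberian claim that $\kappa_U=-\rho_U$ for some real $\rho_U$ forces $U\in\M$, and you correctly suspect that this is not automatic for arbitrary measurable $U$. In fact it is false. Let $E=\bigcup_{n\ge 1}\bigl(n,\,n+2^{-n}\bigr)$, and set $U(x)=1$ on $(0,1)\cup E$ and $U(x)=x^{-2}$ elsewhere. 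Then $U$ is positive, measurable, and bounded on finite intervals, and the excursions contribute
$$
\sum_{n\ge 1}\int_n^{n+2^{-n}}x^{r-1}\,dx \;\le\; \sum_{n\ge 1}(n+1)^{|r-1|}\,2^{-n}\;<\;\infty
$$
for every $r\in\Rset$, so the Mellin integral $\int_1^\infty x^{r-1}U(x)\,dx$ has the same convergence threshold as $\int_1^\infty x^{r-3}\,dx$, giving $\kappa_U=2$. Yet $\log U(x)/\log x$ equals $0$ on $E$ and $-2$ off $E$, so the limit in Theorem~\ref{teo:main:001} does not exist and $U\notin\M$. (This is the finite-index analogue of the example in Remark~\ref{rmq:20141107:001}, which exhibits the same phenomenon for $\kappa_U=\infty$.) So the hypothesis $U\in\M$ genuinely cannot be dropped from the converse; it is built into the theorem's wording, not something the proof must recover by a Tauberian argument.
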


Here is another characterization of $\mathcal{M}_\fty$, of Karamata type.

%
\begin{teo}\label{teo:main:003}
\textbf{Representation Theorem of Karamata type for $\mathcal{M}_\fty$}~
\begin{itemize}
\item[(i)]
Let $U\in\mathcal{M}_\fty$ with finite $\rho_U$ defined in (\ref{Mkappa}). There exist $b>1$ and functions
$\alpha$, $\beta$ and $\epsilon$ satisfying, as $x\rightarrow\infty$,
\begin{equation}\label{alpha-eps-beta}
\alpha(x)/\log(x) \,\, \rightarrow \,\, 0 \, , \qquad  \epsilon(x) \,\, \rightarrow \,\, 1 \, , \qquad 
\beta(x) \,\, \rightarrow \,\, \rho_U,
 \end{equation}
such that, for $ x\geq b$,
\begin{equation}\label{eq:main:000d}
U(x)=\exp\left\{\alpha(x)+\epsilon(x)\,\int_b^x\frac{\beta(t)}{t}dt\right\}\textrm{.}
\end{equation}
\item[(ii)]
Conversely, if there exists a positive measurable function $U$ with support $\Rset^+$, bounded on finite intervals,
satisfying (\ref{eq:main:000d}) for some $b>1$ and functions $\alpha$, $\beta$, and $\epsilon$ satisfying \eqref{alpha-eps-beta},
then $U\in\mathcal{M}_\fty$ with finite $\rho_U$ defined in (\ref{Mkappa}).
\end{itemize}
\end{teo}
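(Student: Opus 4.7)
The plan is to derive both directions of Theorem \ref{teo:main:003} directly from the logarithmic characterization of $\M$ given in Theorem \ref{teo:main:001}. Taking logarithms in \eqref{eq:main:000d} converts the problem into an additive decomposition of $\log U(x)$, and the three conditions in \eqref{alpha-eps-beta} are precisely what is needed so that, after division by $\log x$, the right-hand side tends to $\rho_U$.

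For part (i), I would note that only \emph{some} admissible choice of $\alpha$, $\beta$, $\epsilon$ has to be exhibited. The simplest is the constant choice $\beta(t)\equiv\rho_U$ and $\epsilon(t)\equiv 1$ (say with $b=2$), which via \eqref{eq:main:000d} forces
$$\alpha(x)\,:=\,\log U(x)-\rho_U\bigl(\log x-\log b\bigr).$$
Dividing by $\log x$ and applying Theorem \ref{teo:main:001} gives $\alpha(x)/\log x\to 0$ as $x\to\infty$, while the other two conditions in \eqref{alpha-eps-beta} hold trivially. Measurability of $\alpha$ follows from the measurability of $U$ and continuity of $\log$.

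For part (ii), the plan is to take the logarithm of \eqref{eq:main:000d}, divide by $\log x$, and show that each summand behaves correctly, so that $\log U(x)/\log x\to\rho_U$ and Theorem \ref{teo:main:001} applies. The first summand $\alpha(x)/\log x$ vanishes by hypothesis, and $\epsilon(x)\to 1$. The content is to establish
$$\frac{1}{\log x}\int_b^x\frac{\beta(t)}{t}\,dt\,\longrightarrow\,\rho_U\quad\text{as }x\to\infty.$$
This is a standard Cesàro-type argument: given $\delta>0$, choose $T$ large enough that $|\beta(t)-\rho_U|<\delta$ for $t\geq T$, split the integral at $T$, bound the contribution from $[T,x]$ between $(\rho_U-\delta)(\log x-\log T)$ and $(\rho_U+\delta)(\log x-\log T)$, and observe that the integral on $[b,T]$ is a fixed constant. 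Dividing by $\log x$ and letting $x\to\infty$ then $\delta\to 0$ yields the claim.

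The proof as sketched involves no real obstacle; the integral estimate in part (ii) is textbook, and part (i) reduces to an almost trivial choice of the three auxiliary functions. What this reflects is that the representation \eqref{eq:main:000d} is a much looser structure than the classical Karamata representation for RV functions -- where the analogue of $\alpha$ must converge to a finite positive constant rather than merely be $o(\log x)$ -- which is precisely why the existence part admits such a simple witness.
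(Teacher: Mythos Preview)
Your proof is correct. Part (ii) is essentially the paper's argument: both take logarithms, divide by $\log x$, and reduce to showing $\frac{1}{\log x}\int_b^x \frac{\beta(t)}{t}\,dt \to \rho_U$, then invoke Theorem~\ref{teo:main:001}; the paper uses L'H\^opital's rule for this limit where you give a direct Ces\`aro estimate, which is if anything cleaner since it does not rely on pointwise differentiability of the integral.

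Part (i), however, is genuinely different and much simpler than the paper's proof. The paper builds non-constant auxiliary functions: it takes $\beta_U(t)=\log(U(t))/\log(t)$, defines $\epsilon_U$ as a ratio of $\log U(x)$ to an averaged integral expression, and then must handle the case $\rho_U=0$ separately by passing to $V(x)=xU(x)$ and redoing the construction. Your constant choice $\beta\equiv\rho_U$, $\epsilon\equiv 1$ avoids all of this, including the case split. What the paper's more elaborate construction buys is a $\beta_U$ that genuinely tracks the local behaviour of $U$, in closer analogy with the classical Karamata representation for RV functions; your version, as you note in your closing remark, shows transparently that the representation \eqref{eq:main:000d} is structurally loose enough that all the content can be absorbed into $\alpha$ alone.
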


\begin{rmq}~
\begin{enumerate}
\item Another way to express \eqref{eq:main:000d} is the following:
\begin{equation}\label{eq:teo:main:003:004}
U(x)=\exp\left\{\alpha(x)+\frac{\epsilon(x)\,\log(x)}{x}\,\int_b^x\beta(t)dt\right\}\textrm{.}
\end{equation}
\item The function $\alpha$ defined in Theorem \ref{teo:main:003} is not necessarily bounded, contrarily to the case of Karamata representation for RV functions.
\end{enumerate}
\end{rmq}

\begin{exm}\label{exm:main:intro:002}
Let $U\in\M$ with $\M$-index $\kappa_U$. If there exists $c>0$ such that $U<c$, then $\kappa_U\geq0$.
\end{exm}
Indeed, since we have
$\displaystyle 
\lim_{x\rightarrow\infty}\frac{\log\left(1/U(x)\right)}{\log(x)}\geq
\lim_{x\rightarrow\infty}\frac{\log\left(1/c\right)}{\log(x)}=0\textrm{,}
$
applying Theorem \ref{teo:main:001} allows to conclude.\hfill $\Box$

\subsection{Extension of the class $\mathcal{M}$}

We extend the class $\mathcal{M}$ introducing two other classes of functions.

\begin{defi}\label{eq:main:defi:classMextension}
$\mathcal{M}_\infty$ and $\mathcal{M}_{-\infty}$ are the classes of positive measurable functions $U$ with support $\Rset^+$, bounded on finite intervals, defined as
\begin{equation}\label{M+}
\M_\infty :=\left\{ U ~ : ~ \forall \rho \in\Rset, \, \lim_{x\rightarrow\infty}\frac{U(x)}{x^{\rho}}=0\right\}
\end{equation}
and
\begin{equation}\label{M-}
\M_{-\infty} :=\left\{ U ~ : ~ \forall\rho\in\Rset, \, \lim_{x\rightarrow\infty}\frac{U(x)}{x^{\rho}}=\infty\right\}
\end{equation}
\end{defi}

Notice that it would be enough to  consider $\rho<0$ ($\rho>0$, respectively) in (\ref{M+}) ((\ref{M-}), respectively),  and that $\M_\infty$, $\M_{-\infty}$ and $\M_\fty$ are disjoint. 

We denote by $\M_{\pm\infty}$ the union $\M_\infty\cup\M_{-\infty}$.

We obtain similar properties for $\M_\infty$ and $\M_{-\infty}$, as the ones given for $\M$, namely:
\begin{propt}\label{propt:main:001extension}~
\vspace{-2mm}
\begin{itemize}
\item[(i)]  $U\in\M_\infty \quad \Longleftrightarrow \quad 1/U\in\M_{-\infty}$.
\item[(ii)]
If
$
\quad(U,V)\textrm{\  $\in$\ }\M_{-\infty}\times\M\textrm{\  or\ }\M_{-\infty}\times\M_\infty\textrm{\  or\ }\M\times\M_\infty\textrm{,}
$
then $\displaystyle \, \lim_{x\rightarrow\infty}\frac{V(x)}{U(x)}=0$.
\item[(iii)]
If $U,V\in\M_{\infty}$ ($\M_{-\infty}$ respectively), then $U+V\in\M_{\infty}$ ($\M_{-\infty}$ respectively).
\end{itemize}
\end{propt}

The index $\kappa_U$ defined in (\ref{eq:main:000c}) may also be used to analyze $\M_\infty$ and $\M_{-\infty}$.
It can take infinite values, as can be seen in the following example.

\begin{exm}
Consider $U$ defined on $\Rset^+$ by \ $U(x):=e^{-x}$. Then $U\in\M_\infty$ with $\kappa_U=\infty$. 
Choosing $U(x)=e^{x}$ leads to $U\in\M_{-\infty}$ with $\kappa_U=-\infty$.
\end{exm}

A first characterization of $\M_\infty$ and $\M_{-\infty}$ can be provided, as done for $\M$ in Theorem~\ref{teo:main:001}. 
\begin{teo}\label{teo:main:001extension}
\textbf{First characterization of $\M_\infty$ and $\M_{-\infty}$} \\
Let $U$ be a positive measurable function with support $\Rset^+$, bounded on finite intervals. Then we have
\begin{equation}\label{eq:main:000bextension01}
U\in\M_\infty \quad \Longleftrightarrow  \quad\lim_{x\rightarrow\infty} \frac{\log\left(U(x)\right)}{\log(x)}= - \infty
\end{equation}
and
\begin{equation}\label{eq:main:000bextension02}
U\in\M_{-\infty} \quad \Longleftrightarrow \quad\lim_{x\rightarrow\infty} \frac{\log\left(U(x)\right)}{\log(x)}= \infty .
\end{equation}
\end{teo}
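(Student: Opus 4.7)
The plan is to establish \eqref{eq:main:000bextension01} by direct computation with logarithms — mirroring the strategy used for Theorem~\ref{teo:main:001} — and then to obtain \eqref{eq:main:000bextension02} as a corollary via the duality $U\in\M_\infty \iff 1/U\in\M_{-\infty}$ supplied by Properties~\ref{propt:main:001extension}, (i). Throughout, the only delicacy is that $\log x > 0$ for $x>1$, so divisions by $\log x$ should be carried out only after restricting to sufficiently large $x$.

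For the forward direction of \eqref{eq:main:000bextension01}, fix $U\in\M_\infty$ and an arbitrary $\rho\in\Rset$. The defining property $\limx U(x)/x^{\rho}=0$ is equivalent to $\log U(x)-\rho\log x\to -\infty$. Hence, for $x$ large enough, $\log U(x)/\log x < \rho$, which yields $\limsupx \log U(x)/\log x\le \rho$. Since $\rho$ is arbitrary, letting $\rho\to -\infty$ forces $\limx \log U(x)/\log x=-\infty$. Conversely, assuming $\limx \log U(x)/\log x=-\infty$, fix any $\rho\in\Rset$ and pick $N>-\rho$; eventually $\log U(x)<-N\log x$, so
$$\log U(x)-\rho\log x < (-N-\rho)\log x \longrightarrow -\infty,$$
and hence $U(x)/x^{\rho}\to 0$. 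As $\rho$ was arbitrary, $U\in\M_\infty$.

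For \eqref{eq:main:000bextension02}, I would apply \eqref{eq:main:000bextension01} to $1/U$. By Properties~\ref{propt:main:001extension}, (i), $U\in\M_{-\infty}$ iff $1/U\in\M_\infty$; by \eqref{eq:main:000bextension01}, this is equivalent to $\log(1/U(x))/\log x\to -\infty$, which is just $\log U(x)/\log x\to \infty$. (Boundedness of $1/U$ on finite intervals follows from the positivity of $U$ together with its lower boundedness away from $0$ on compacta, which is implicit in the class definition.)

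The proof involves no real obstacle — it is essentially bookkeeping with logarithms and quantifiers. The one point to watch in the forward direction of \eqref{eq:main:000bextension01} is that the inequality $\log U(x)/\log x<\rho$ must be extracted from $\log U(x)-\rho\log x\to -\infty$ by dividing by the positive quantity $\log x$, and then combined \emph{across all} $\rho\in\Rset$ to force the limit to be $-\infty$ rather than merely bounded above.
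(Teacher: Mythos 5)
Your proof is correct and follows essentially the same route as the paper: prove \eqref{eq:main:000bextension01} directly by taking logarithms and dividing by $\log x$ for $x>1$, using arbitrariness of $\rho$, and then deduce \eqref{eq:main:000bextension02} from \eqref{eq:main:000bextension01} via the duality $U\in\M_{-\infty}\iff 1/U\in\M_\infty$ of Properties~\ref{propt:main:001extension}, (i). Your converse is stated a bit more carefully than the paper's (the paper's remark ``which implies that $U(x)/x^{\rho}<1$'' is an abbreviation; what is really used, as you make explicit, is that $\log U(x)-\rho\log x<(-N-\rho)\log x\to-\infty$), but the idea is identical. One small caveat: your parenthetical claim that $1/U$ is bounded on finite intervals because $U$ is ``lower bounded away from $0$ on compacta, which is implicit in the class definition'' is not quite right --- positivity of a measurable function does not give a lower bound on compacta; the paper's definition only asserts boundedness (above) of $U$ on finite intervals, and the paper does not itself address this technicality when invoking Properties~\ref{propt:main:001extension}, (i).
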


\begin{rmq}\label{rmq:20140920:001} {\it Link to a result from Daley and Goldie}. 

If we restrict $\M\cup\M_{\pm\infty}$ to tails of distributions, then combining Theorems \ref{teo:main:001} and \ref{teo:main:001extension} and Theorem 2 in \cite{DaleyGoldie2006}
provide another characterization, namely
$$
U\in\M\cup\M_{\pm\infty}\quad\Longleftrightarrow\quad X_U\in\M^{DG}
$$
where $X_U$ is a rv with tail $U$ and $\M^{DG}$ is the set of non-negative rv's $X$ having the property introduced by Daley and Goldie (see \cite{DaleyGoldie2006}) that
$$
\kappa(X\wedge Y)=\kappa(X)+\kappa(Y)
$$
for independent rv's $X$ and $Y$.
We notice that $\kappa(X)$ defined in \cite{DaleyGoldie2006} (called there the moment index) and applied to rv's, coincides with the $\M$-index of $U$,  when $U$ is the tail of the distribution of $X$.
\end{rmq}

An application of Theorem \ref{teo:main:001extension} provides similar properties as Properties \ref{propt:20140630}, namely:
\begin{propt}\label{propt:20140630ext}~
\vspace{-2mm}
\begin{itemize}
\item[(i)]
Let $(U,V)$ $\in$ $\M_{\infty}\times\M_\infty$ or $\M_{\pm\infty}\times\M$ or $\M_{-\infty}\times\M_{-\infty}$. Then
$U\cdot V\in\M_\infty$ or $\M_{\pm\infty}$ or $\M_{-\infty}$, respectively.
\item[(ii)]
Let $(U,V)$ $\in$ $\M_{\infty}\times\M$ with $\rho_V\geq0$ or $\rho_V<-1$, then $U\ast V\in\M$ with $\rho_{U\ast V}=\rho_V$.
Let $(U,V)\in\M_\infty\times\M_\infty$, then $U\ast V\in\M_\infty$.
Let $(U,V)$ $\in$ $\M_{-\infty}\times\M$ or $\M_{-\infty}\times\M_{\pm\infty}$, then $U\ast V\in\M_{-\infty}$.
\item[(iii)]
Let $U$ $\in$ $\M_{\pm\infty}$ and $V$ $\in$ $\M$ such that $\displaystyle \limx V(x)=\infty$ or $V$ $\in$ $\M_{-\infty}$, then
$U\circ V\in\M_{\pm\infty}$.
\end{itemize}
\end{propt}

Looking for extending Theorems~\ref{teo:main:002}-\ref{teo:main:003} to $\M_\infty$ and $\M_{-\infty}$ provide the next results.
On the converses of the result corresponding to Theorem~\ref{teo:main:002} extra-conditions are required.

\begin{teo}\label{teo:main:002extension}~

Let $U$ be a positive measurable function with support $\Rset^+$, bounded on finite intervals, with $\kappa_U$ defined in (\ref{eq:main:000c}). Then
\vspace{-2mm}
\begin{itemize}
\item[(i)]
\begin{itemize}
\item[(a)]
$  U\in\M_\infty\quad \Longrightarrow \quad \kappa_U=\infty$.
\item[(b)]
$U$ continuous, $\displaystyle \limx U(x)=0$, and $\kappa_U=\infty\quad \Longrightarrow \quad  U\in\M_\infty$.
\end{itemize}
\item[(ii)]
\begin{itemize}
\item[(a)]
$U\in\M_{-\infty}  \quad \Longrightarrow \quad \kappa_U=-\infty$.
\item[(b)]
$U$ continuous and non-decreasing, and $\kappa_U=-\infty\quad \Longrightarrow \quad  U\in\M_{-\infty}$.
\end{itemize}
\end{itemize}
\end{teo}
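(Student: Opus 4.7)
The theorem splits naturally into forward directions (i)(a), (ii)(a) that should follow from the definitions, and converses (i)(b), (ii)(b) where the continuity/monotonicity hypotheses enter. I would handle them in that order.

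For (i)(a), I fix $r\in\Rset$ and choose $\rho<-r-1$. The definition \eqref{M+} gives $U(x)/x^{\rho}\to 0$, hence $U(x)\le x^{\rho}$ for $x\ge x_0$; then $x^{r-1}U(x)\le x^{r+\rho-1}$ with $r+\rho-1<-2$, so the tail integral converges, and the piece on $[1,x_0]$ is finite since $U$ is bounded on finite intervals. Thus every $r$ belongs to the set in \eqref{eq:main:000c}, giving $\kappa_U=\infty$. Part (ii)(a) is the mirror: pick $\rho>-r$ so that \eqref{M-} yields $U(x)\ge x^{\rho}$ eventually and
\begin{equation*}
\int_{x_0}^\infty x^{r-1}U(x)\,dx \;\ge\; \int_{x_0}^\infty x^{r+\rho-1}\,dx \;=\; \infty,
\end{equation*}
since the exponent exceeds $-1$; hence the set in \eqref{eq:main:000c} is empty and $\kappa_U=-\infty$.

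For the converses I would argue contrapositively through Theorem~\ref{teo:main:001extension}, which characterizes $\M_\infty$ (resp.\ $\M_{-\infty}$) by $\log U(x)/\log x\to-\infty$ (resp.\ $\to\infty$). In (i)(b), if $U\notin\M_\infty$ then there exist $M>0$ and $x_n\to\infty$ with $U(x_n)\ge x_n^{-M}$. Continuity of $U$ together with $U(x)\to 0$ yields intervals $I_n$ centered at $x_n$, which after thinning may be taken pairwise disjoint, on which $U\ge\tfrac12 x_n^{-M}$; then
\begin{equation*}
\int_1^\infty x^{r-1}U(x)\,dx \;\ge\; \tfrac12\sum_n |I_n|\,x_n^{r-1-M}
\end{equation*}
must diverge for $r$ chosen large enough, contradicting $\kappa_U=\infty$. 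Symmetrically for (ii)(b): if $U\notin\M_{-\infty}$ there exist $M>0$ and $x_n\to\infty$ with $U(x_n)\le x_n^M$; non-decreasingness upgrades this to $U(x)\le x_n^M$ on the whole interval $[1,x_n]$, and by selecting the $x_n$ along a geometric subsequence and summing the contributions over successive annuli $[x_n,x_{n+1}]$ one should obtain $\int_1^\infty x^{r-1}U(x)\,dx<\infty$ for some $r<-M-1$, contradicting $\kappa_U=-\infty$.

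The main obstacle is converting the pointwise bounds at the $x_n$ into genuine estimates on the integrals. In (i)(b) continuity only controls $U$ on a neighborhood of $x_n$ of unspecified width, so one must quantify how thick the $I_n$ can be forced to be using $U(x)\to 0$; in (ii)(b) monotonicity gives interval bounds for free, but the integral control requires the extracted sequence $x_n$ to grow geometrically, which must be justified from the $\liminf$ behavior of $\log U/\log x$. The forward parts are routine; the delicacy of this theorem, compared with Theorem~\ref{teo:main:002}, is precisely this extra regularity step needed for the converses.
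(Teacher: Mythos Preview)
Your treatment of (i)(a) and (ii)(a) is correct and matches the paper's. For the converses, the paper does \emph{not} argue contrapositively but works directly via integration by parts: for (i)(b) one writes $t^{r+1}U(t)=(r+1)\int_0^t x^{r}U(x)\,dx+\int_0^t x^{r+1}\,dU(x)$ and argues, using a Fubini identity together with $\lim_{x\to\infty}U(x)=0$, that the two integrals on the right cancel as $t\to\infty$, whence $t^{r+1}U(t)\to 0$ for every $r>0$; an analogous manipulation handles (ii)(b) using that $x^rU(x)$ is non-decreasing for $r\ge 0$.

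The obstacle you flag in your contrapositive route is not a technicality to be filled in but a genuine barrier. In (i)(b) the hypothesis $U(x)\to 0$ gives no lower bound on $|I_n|$, because the target level $\tfrac12 x_n^{-M}$ also tends to $0$: consider $U(x)=e^{-x}$ together with continuous triangular bumps at each integer $n$ of height $1/n$ and base width $1/n^n$. This $U$ is continuous with $U(x)\to 0$; the bumps are thin enough that $\int_1^\infty x^{r-1}U(x)\,dx<\infty$ for every $r$, so $\kappa_U=\infty$; yet $U(n)=1/n$ gives $\log U(n)/\log n=-1$, so $U\notin\M_\infty$. Thus there is no way to force the $I_n$ to be thick from the stated hypotheses alone --- and in fact the paper's own Fubini step tacitly needs $\int_0^\infty x^{r+1}\,|dU(x)|<\infty$, which this same example violates. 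The analogous phenomenon defeats your geometric-subsequence plan in (ii)(b): the sequence of points $x_n$ with $U(x_n)\le x_n^{M}$ can be made to grow super-exponentially (e.g.\ $x_{n+1}=e^{x_n}$), and then monotonicity only gives $U(x)\le x_{n+1}^{M}$ on $(x_n,x_{n+1}]$, which is no useful bound in terms of $x$; summing over the annuli does not produce a convergent integral for any $r$.
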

\begin{rmq}\label{rmq:20141107:001}~
\vspace{-2mm}
\begin{enumerate}
\item 
In (i)-(b), the condition $\kappa_U=\infty$ might appear intuitively sufficient to prove that $U\in\M_{\infty}$.
This is not true, as we can see with the following example showing for instance that the continuity assumption is needed. 
Indeed,  we can check that the function $U$ defined on $\Rset^+$ by
$$
U(x):=\left\{
\begin{array}{ll}
1/x & \mbox{if} \quad  x\in \underset{n\in \Nset\backslash \{0\}}{\bigcup} (n; n+1/n^n) \\    
e^{-x} & \mbox{otherwise},
\end{array}
\right.
$$
satisfies $\kappa_U=\infty$ and $\displaystyle \limx U(x)=0$, but is not continuous and does not belong to $\M_{\infty}$.
\item
The proof of (i)-(b) is based on an integration by parts, isolating the term $t^rU(t)$. The continuity of $U$ is needed, otherwise we would end up with an infinite number of jumps of the type 
$U(t^+)-U(t^-)(\neq 0)$ on $\Rset^+$.
\end{enumerate}
\end{rmq}

\begin{teo}\label{teo:main:003extension}
\textbf{Representation Theorem of Karamata Type for $\M_\infty$ and $\M_{-\infty}$}
\begin{itemize}
\item[(i)] If $U\in\M_{\infty}$, then there exist $b>1$ and a positive measurable function
$\alpha$ satisfying
\begin{equation}\label{eq:main:alpha}
\textrm{$\displaystyle \alpha(x)/\log(x) \, \underset{x\to\infty}{\rightarrow} \, \infty$,}
\end{equation}
such that, $\forall x\ge b$,
\begin{equation}\label{eq:main:000dextension01}
U(x)=\exp\left\{-\alpha(x)\right\}.
\end{equation}
\item[(ii)] If $U\in\M_{-\infty}$, then there exist $b>1$ and a positive measurable function
$\alpha$ satisfying (\ref{eq:main:alpha}) such that, $\forall x\ge b$,
\begin{equation}\label{eq:main:000dextension02}
U(x)=\exp\left\{\alpha(x)\right\}.
\end{equation}
\item[(iii)]
Conversely, if there exists a positive function $U$ with support $\Rset^+$, bound\-ed on finite intervals,
satisfying (\ref{eq:main:000dextension01}) or (\ref{eq:main:000dextension02}), respectively, for some positive function $\alpha$ satisfying (\ref{eq:main:alpha}),
then $U\in\mathcal{M}_\infty$ or $U\in\mathcal{M}_{-\infty}$, respectively.
\end{itemize}
\end{teo}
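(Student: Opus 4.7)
The plan is to read this off Theorem~\ref{teo:main:001extension}, which already characterizes membership in $\M_\infty$ and $\M_{-\infty}$ through the behavior of $\log U(x)/\log(x)$, and then simply set $\alpha(x) := |\log U(x)|$ on an appropriate tail. The only preparation needed is to guarantee the right sign of $\log U(x)$ so that $\alpha$ can be chosen positive, and to transfer the limit on $\log U(x)/\log(x)$ into the required growth $\alpha(x)/\log(x) \to \infty$.

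For part (i), I would start by applying the definition of $\M_\infty$ with $\rho = 0$ to obtain $\limx U(x) = 0$. Hence there exists $b>1$ such that $0<U(x)<1$ for all $x\geq b$, which ensures $-\log U(x)>0$. Define $\alpha(x) := -\log U(x)$ on $[b,\infty)$ (and extend it arbitrarily but measurably to $(0,b)$ to keep positivity and measurability, e.g. $\alpha(x):=1$ there). Then $\alpha$ is positive and measurable, the identity $U(x) = \exp\{-\alpha(x)\}$ holds on $[b,\infty)$ by construction, and
\[
\frac{\alpha(x)}{\log(x)} = -\,\frac{\log U(x)}{\log(x)} \underset{x\to\infty}{\longrightarrow} +\infty
\]
by the forward direction of \eqref{eq:main:000bextension01} in Theorem~\ref{teo:main:001extension}. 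This gives \eqref{eq:main:alpha} and \eqref{eq:main:000dextension01}.

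Part (ii) is obtained by a symmetric construction. From $U \in \M_{-\infty}$, the choice $\rho = 0$ in the defining relation \eqref{M-} yields $\limx U(x) = \infty$, so there is $b>1$ such that $U(x)>1$ for $x\geq b$. Setting $\alpha(x) := \log U(x)$ on $[b,\infty)$ (and positively extending on $(0,b)$) gives a positive measurable function, with $U(x)=\exp\{\alpha(x)\}$ on $[b,\infty)$, and $\alpha(x)/\log(x) = \log U(x)/\log(x) \to +\infty$ by \eqref{eq:main:000bextension02}.

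For the converse (iii), the argument is immediate: if $U(x)=\exp\{-\alpha(x)\}$ on $[b,\infty)$ with $\alpha$ positive satisfying \eqref{eq:main:alpha}, then $\log U(x)/\log(x) = -\alpha(x)/\log(x) \to -\infty$, so Theorem~\ref{teo:main:001extension} places $U$ in $\M_\infty$; the case $U(x)=\exp\{\alpha(x)\}$ yields $\log U(x)/\log(x)\to +\infty$ and hence $U\in\M_{-\infty}$. Since positivity of $\alpha$ is assumed and $U$ is bounded on finite intervals by hypothesis, no further regularization is required. There is essentially no obstacle; the only mildly delicate point is the housekeeping that $\alpha$ must be defined globally on $\Rset^+$ and remain positive, which is handled by extending it on $(0,b)$ by an arbitrary positive measurable constant, this extension being irrelevant to both the representation on $[b,\infty)$ and the asymptotic condition~\eqref{eq:main:alpha}.
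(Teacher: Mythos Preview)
Your proof is correct and essentially the same as the paper's: both define $\alpha(x)=-\log U(x)$ on a tail $[b,\infty)$ where this quantity is positive, and invoke Theorem~\ref{teo:main:001extension} for the asymptotic and for the converse. The only cosmetic differences are that the paper secures positivity of $\alpha$ in (i) directly from $-\log U(x)/\log(x)\to\infty$ (rather than via $U(x)\to 0$), and handles (ii) by applying (i) to $1/U$ through Properties~\ref{propt:main:001extension}(i) instead of repeating the symmetric construction.
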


\subsection{On the complement set of $\M\cup\M_{\pm\infty}$}

Considering measurable functions $U:\Rset^+\to\Rset^+$, we have, applying Theorems \ref{teo:main:001} and \ref{teo:main:001extension}, that $U$ belongs to $\M$, $\M_\infty$ or $\M_{-\infty}$ if and only if $\displaystyle \limx \frac{\log\left(U(x)\right)}{\log(x)}$ exists, finite or infinite.
Using the notions (see for instance \cite{BinghamGoldieTeugels}) of \emph{lower order} of $U$, defined by
\begin{equation}\label{eq:20140609:001}
\mu(U):=\liminfx\frac{\log\left(U(x)\right)}{\log(x)}\textrm{,}
\end{equation}
and \emph{upper order} of $U$, defined by
\begin{equation}\label{eq:20140609:001bis}
\nu(U):=\limsupx\frac{\log\left(U(x)\right)}{\log(x)}\textrm{,}
\end{equation}
we can rewrite this characterization simply by $\mu(U)=\nu(U)$.

Hence, the complement set of $\M\cup\M_{\pm\infty}$ in the set of functions $U:\Rset^+\to\Rset^+$, denoted by $\Oset$, can be written as 
$$
\Oset:= \{U:\Rset^+\rightarrow\Rset^+~: \mu(U)<\nu(U)\}.
$$
This set is nonempty: $\Oset\neq\emptyset$, as we are going to see through examples.
A natural question is whether the \PBdH\ (see Theorem \ref{tpbdh} in Appendix \ref{ProofsofresultsconcerningOset}) applies when restricting $\Oset$ to tails of distributions.
The answer follows.
\begin{teo}\label{teo:20140801:001}~\\
Any distribution of a rv having a tail in $\Oset$ does not satisfy \PBdH.
\end{teo}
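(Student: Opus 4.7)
I would argue by contraposition: assuming $F$ satisfies \PBdH, I will show that $\barF\notin\Oset$. Since \PBdH\ is equivalent to $F$ lying in the maximum domain of attraction of one of the three extreme value distributions (Fr\'echet, Weibull, Gumbel), and since $\barF\in\Oset$ forces $\barF(x)>0$ for every $x\in\Rset^+$ (so that the right endpoint $x_F$ is infinite), only the Fr\'echet and Gumbel domains are available; the Weibull case is excluded because it requires $x_F<\infty$.

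\emph{Fr\'echet case.} If $F\in\mathrm{MDA}(\Phi_\alpha)$, then $\barF$ is regularly varying with index $-\alpha<0$, so $\barF(x)=x^{-\alpha}L(x)$ for some slowly varying $L$. Since $\log L(x)/\log(x)\to 0$ as $x\to\infty$ for any slowly varying $L$, one gets $\log\barF(x)/\log(x)\to -\alpha$, and Theorem~\ref{teo:main:001} immediately yields $\barF\in\M$ with $\rho_{\barF}=-\alpha$, contradicting $\barF\in\Oset$.

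\emph{Gumbel case.} Here the key classical fact is that any distribution in $\mathrm{MDA}(\Lambda)$ with infinite right endpoint has all positive moments finite; equivalently, $\int_1^{\infty} p\,t^{p-1}\barF(t)\,dt<\infty$ for every $p>0$. Combined with the monotonicity of $\barF$, this gives the elementary bound
$$
x^p\,\barF(x)\leq \frac{1}{1-2^{-p}}\int_{x/2}^{\infty} p\,t^{p-1}\,\barF(t)\,dt,
$$
whose right-hand side tends to $0$ as $x\to\infty$. Thus $x^p\barF(x)\to 0$ for every $p>0$, i.e.\ $\lim_{x\to\infty}\barF(x)/x^{\rho}=0$ for every $\rho\in\Rset$ (the case $\rho\geq 0$ being trivial). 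By Definition~\ref{eq:main:defi:classMextension} this means $\barF\in\M_\infty$, again contradicting $\barF\in\Oset$.

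\emph{Anticipated obstacle.} The subtle step is the Gumbel case, where one must translate abstract membership in $\mathrm{MDA}(\Lambda)$ into concrete pointwise tail decay. I would either invoke the moment-finiteness statement above (standard in EVT textbooks), or equivalently exploit the de~Haan representation $\barF(x)=c(x)\exp\{-\int_{z_0}^x dt/a(t)\}$ with auxiliary function $a(x)=o(x)$, from which $-\log\barF(x)/\log(x)\to\infty$ is immediate and Theorem~\ref{teo:main:001extension} again gives $\barF\in\M_\infty$. Either route reduces the theorem to a clean application of the characterisations already established in Section~1.
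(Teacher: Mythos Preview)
Your proof is correct, but the route differs from the paper's. The paper argues directly from the GPD limit in \PBdH: fixing a threshold $u$, it uses the approximate inequality $\barF(u+x)\leq(1+\epsilon)\barF(u)\,G_\xi(x/a(u))$ and then computes $\nu(\barF)$ and $\mu(\barF)$ explicitly by taking $\log(\cdot)/\log(x)$ of both sides along suitable sequences, obtaining $\mu(\barF)=\nu(\barF)=-1/\xi$ when $\xi>0$ and $\mu(\barF)=\nu(\barF)=-\infty$ when $\xi=0$. In other words, the paper never leaves the POT formulation and derives the equality of lower and upper orders by a direct sandwich.

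You instead translate \PBdH\ into membership in a domain of attraction and then invoke structural facts about each domain: regular variation of $\barF$ in the Fr\'echet case, and finiteness of all moments (or the de~Haan representation) in the Gumbel case. This is cleaner and more conceptual; it effectively shows that Theorem~\ref{teo:20140801:001} is a corollary of the ideas behind Theorem~\ref{teo:20140411:001} (which the paper proves later, in Section~\ref{Section2.3}). Your Gumbel argument via the elementary bound $x^p\barF(x)\leq(1-2^{-p})^{-1}\int_{x/2}^\infty p\,t^{p-1}\barF(t)\,dt$ is a nice self-contained alternative to citing Corollary~\ref{cor:mises:dehaan0}. What the paper's approach buys is independence from those auxiliary results: it uses nothing beyond the statement of \PBdH\ itself, so it is logically earlier. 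What your approach buys is transparency---once one knows the domain-of-attraction characterizations, the result is immediate.
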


Examples of distributions $F$ satisfying $\mu(\barF)<\nu(\barF)$ are not well-known.
A non explicit one was given by Daley (see \cite{Daley2001}) when considering rv's with discrete support (see \cite{DaleyGoldie2006}).
We will provide a couple of explicit parametrized examples of functions in $\Oset$ which include tails of distributions with discrete support.
These functions can be extended easily to continuous positive functions not necessarily monotone, for instance adapting polynomes given by Karamata (see \cite{Karamata1931a001}).
These examples are more detailed in Appendix \ref{ProofsofresultsconcerningOset}.

\begin{exm}\label{exm:20140802:001}~

Let $\alpha>0$, $\beta\in\Rset$ such that $\beta\neq-1$, and $x_a>1$.
Let us consider the increasing series defined by $x_n=x_a^{(1+\alpha)^n}$, $n\geq1$, well-defined because $x_a>1$.
Note that 
$x_n\rightarrow\infty$ as $n\rightarrow\infty$.

The function $U$ defined by
\begin{equation}\label{eq:20140610:003}
U(x):=\left\{
\begin{array}{ll}
1\textrm{,} & 0\leq x<x_1 \\
x_n^{\alpha(1+\beta)}\textrm{,} & x\in[x_n,x_{n+1})\textrm{,} \quad \forall n\geq1
\end{array}
\right.
\end{equation}
belongs to $\Oset$, with
$$
\left\{
\begin{array}{cc}
\displaystyle\mu(U)=\frac{\alpha(1+\beta)}{1+\alpha}\quad\textrm{and}\quad\nu(U)=\alpha(1+\beta)\textrm{,} & \textrm{if $1+\beta>0$} \\
 & \\
\displaystyle\mu(U)=\alpha(1+\beta)\quad\textrm{and}\quad\nu(U)=\frac{\alpha(1+\beta)}{1+\alpha}\textrm{,} & \textrm{if $1+\beta<0$.}
\end{array}
\right.
$$
Moreover, if $1+\beta<0$, then $U$ is a tail of distribution whose associated rv has moments lower than $-\alpha(1+\beta)\big/(1+\alpha)$.
\end{exm}

\begin{exm}\label{exm:20140802:002}~

Let $c>0$ and $\alpha\in\Rset$ such that $\alpha\neq0$.
Let $(x_n)_{n\in\Nset}$ be defined by $x_1=1$ and $x_{n+1}=2^{x_n/c}$, $n\geq1$, well-defined for $c>0$.
Note that $x_n\to\infty$ as $n\to\infty$.

The function $U$ defined by
$$
U(x):=\left\{
\begin{array}{ll}
1 & 0\leq x<x_1 \\
2^{\alpha x_n} & x_n\leq x<x_{n+1}\textrm{,} \quad \forall n\geq1
\end{array}
\right.
$$
belongs to $\Oset$, with
$$
\left\{
\begin{array}{cc}
\displaystyle\mu(U)=\alpha c\quad\textrm{and}\quad\nu(U)=\infty\textrm{,} & \textrm{if $\alpha>0$} \\
 & \\
\displaystyle\mu(U)=-\infty\quad\textrm{and}\quad\nu(U)=\alpha c\textrm{,} & \textrm{if $\alpha<0$.}
\end{array}
\right.
$$
Moreover, if $\alpha<0$, then $U$ is a tail of distribution whose associated rv has moments lower than $-\alpha c$.
\end{exm}

\section{Extension of RV results}

In this section, well-known results and fundamental in Extreme Value Theory, as Karamata's relations 
and Karamata's Tauberian Theorem, are discussed on $\mathcal{M}$. A key tool for the extension of these standard results to $\mathcal{M}$ is
the characterizations of $\M$ given in Theorems \ref{teo:main:001} and \ref{teo:main:002}.

First notice the relation between the class $\M$ introduced in the previous section and the class RV defined in (\ref{eq:000rv}).
\begin{prop}\label{prop:20140329:strictsubset}~
RV$_\rho$ ($\rho\in\Rset$) is a strict subset of $\M$.
\end{prop}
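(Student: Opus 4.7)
The plan is to prove two things: that $RV_\rho\subset\M$ for every $\rho\in\Rset$, and that the inclusion is strict. The key tool will be the first characterization of $\M$ provided by Theorem~\ref{teo:main:001}, which reduces membership in $\M$ (with tail index $\rho$) to verifying that $\log U(x)/\log x\to\rho$ as $x\rightarrow\infty$.

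For the inclusion, let $U\in RV_\rho$. By the classical Karamata representation theorem for regularly varying functions, one can write $U(x)=x^\rho L(x)$ where $L$ is slowly varying, and furthermore
\begin{equation*}
L(x)=c(x)\,\exp\!\left(\int_1^x\frac{\varepsilon(t)}{t}\,dt\right),
\end{equation*}
with $c(x)\to c_0\in(0,\infty)$ and $\varepsilon(t)\to0$ as $t\to\infty$. Taking logarithms gives
\begin{equation*}
\frac{\log U(x)}{\log x}=\rho+\frac{\log c(x)}{\log x}+\frac{1}{\log x}\int_1^x\frac{\varepsilon(t)}{t}\,dt.
\end{equation*}
The middle term vanishes because $\log c(x)$ stays bounded. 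For the last term, given $\delta>0$ one chooses $T$ such that $|\varepsilon(t)|<\delta$ for $t\geq T$; splitting the integral at $T$, the part over $[1,T]$ is a constant and the part over $[T,x]$ is bounded by $\delta(\log x-\log T)$, yielding $\int_1^x\varepsilon(t)/t\,dt=o(\log x)$. Hence $\log U(x)/\log x\to\rho$, and Theorem~\ref{teo:main:001} gives $U\in\M$ with $\rho_U=\rho$.

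For the strictness, I would exhibit an explicit function in $\M$ that is not regularly varying. A convenient choice is
\begin{equation*}
U(x):=2+\sin(\log x),\qquad x>0,
\end{equation*}
(extended by any positive value at $0$). It is positive, measurable, and $1\leq U(x)\leq 3$, so in particular bounded on finite intervals, and $|\log U(x)|/\log x\to 0$. By Theorem~\ref{teo:main:001}, $U\in\M$ with $\rho_U=0$. However, for any fixed $t>0$ with $t\neq 1$,
\begin{equation*}
\frac{U(xt)}{U(x)}=\frac{2+\sin(\log x+\log t)}{2+\sin(\log x)}
\end{equation*}
oscillates between two distinct values as $x\to\infty$ and has no limit, so $U\notin RV_\rho$ for any $\rho\in\Rset$. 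This establishes the strict inclusion.

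The argument has no real obstacle: the inclusion reduces to a now-standard observation about slowly varying functions, and the strictness is witnessed by any bounded oscillating function in the multiplicative scale. The one point requiring a modicum of care is the asymptotic estimate $\int_1^x\varepsilon(t)/t\,dt=o(\log x)$ used to handle the Karamata representation, which is handled by the standard $\delta$-splitting above.
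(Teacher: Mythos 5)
Your proof is correct and reaches the result by essentially the same route as the paper: reduce membership in $\M$ to the limit $\log U(x)/\log x \to \rho$ via Theorem~\ref{teo:main:001} (equivalently Properties~\ref{propt:main:001}, (vi)), and witness strictness with a bounded oscillating function. The one genuine difference is in the inclusion step: the paper simply cites the known Karamata relation \eqref{eq:kar:001:karamata} from \cite{Karamata1933} to obtain $\lim \log U(x)/\log x = \rho$ directly, whereas you re-derive that relation from the Karamata representation $U(x)=x^\rho c(x)\exp\bigl(\int_1^x \varepsilon(t)/t\,dt\bigr)$ by showing $\int_1^x \varepsilon(t)/t\,dt = o(\log x)$ via the $\delta$-splitting. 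Your version is more self-contained and transparent; the paper's is shorter by appeal to a standard fact. For strictness, your example $U(x)=2+\sin(\log x)$ plays exactly the role of the paper's $U(x)=2+\sin(x)$. One small imprecision: the ratio $U(xt)/U(x)$ does not oscillate for \emph{every} $t\neq 1$ — when $\log t$ is an integer multiple of $2\pi$ it is identically $1$ — but failure of the limit to exist for a single $t$ (say $t=2$) already shows $U\notin\textrm{RV}$, so the conclusion stands.
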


The proof of this claim comes from the Karamata relation (see \cite{Karamata1933}) given, for all RV function $U$ with index $\rho\in\Rset$, by
\begin{equation}\label{eq:kar:001:karamata}
\lim_{x\rightarrow\infty}\frac{\log\left(U(x)\right)}{\log(x)}=\rho\textrm{,}
\end{equation}
which implies, using Properties \ref{propt:main:001}, (vi), that $U\in\M$ with $\M$-index $\kappa_U=-\rho$.
Moreover, RV $\neq$ $\M$, noticing that, for $t>0$,
$
\displaystyle \lim_{x\rightarrow\infty}\frac{U(tx)}{U(x)}
$
does not necessarily exist, whereas it does for a RV function $U$.
For instance the function defined on $\Rset^+$ by $U(x)=2+\sin(x)$, is not RV, but 
$
\displaystyle \lim_{x\rightarrow\infty}\frac{\log\left(U(x)\right)}{\log(x)}=0\textrm{,}
$
hence $U\in\M$.

\subsection{Karamata's Theorem}

We will focus on the well-known Karamata Theorem developed for RV (see \cite{Karamata1930} and e.g. \cite{feller21966,BinghamGoldieTeugels}) to analyze its extension to $\M$.
Let us recall it, borrowing the version given in \cite{deHaan}.
\begin{teo}\textbf{Karamata's Theorem (\cite{Karamata1930}; e.g. \cite{deHaan})}\label{teo:KaramatasTheorem}~

Suppose $U:\Rset^+\rightarrow\Rset^+$ is Lebesgue-summable on finite intervals.
Then
\begin{itemize}
\item[(K1)]
$$
U\in\textrm{RV}_\rho\textrm{, $\rho>-1$}
\quad\Longleftrightarrow\quad
\lim_{x\rightarrow\infty}\frac{xU(x)}{\int_0^xU(t)dt}=\rho+1>0\textrm{.}
$$
\item[(K2)]
$$
U\in\textrm{RV}_\rho\textrm{, $\rho<-1$} \quad \Longleftrightarrow \quad \lim_{x\rightarrow\infty}\frac{xU(x)}{\int_x^{\infty}U(t)dt}=-\rho-1>0\textrm{.}
$$
\item[(K3)]
\begin{itemize}
\item[(i)]
$\displaystyle U\in\textrm{RV}_{-1} \quad \Longrightarrow \quad  \lim_{x\rightarrow\infty}\frac{xU(x)}{\int_0^xU(t)dt}=0$.
\item[(ii)]
$\displaystyle U\in\textrm{RV}_{-1} \mbox{ and } \int_0^{\infty}U(t)dt<\infty \quad\Longrightarrow\quad \lim_{x\rightarrow\infty}\frac{xU(x)}{\int_x^{\infty}U(t)dt}=0$.
\end{itemize}
\end{itemize}
\end{teo}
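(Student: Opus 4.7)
The plan is to prove Karamata's Theorem by combining the Karamata representation of RV functions with a Potter-type bound, and then inverting the forward implication by viewing the hypothesis as an ODE for the integral. For brevity I will write $U \in \textrm{RV}_\rho$ as $U(x) = x^\rho L(x)$ with $L$ slowly varying.

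For the forward direction of (K1), I would do the substitution $t = xs$ in the denominator to get
\[
\int_0^x U(t)\,dt \;=\; x^{\rho+1}\int_0^1 s^{\rho}\,L(xs)\,ds,
\qquad\text{so}\qquad
\frac{xU(x)}{\int_0^x U(t)\,dt} \;=\; \frac{1}{\displaystyle\int_0^1 s^{\rho}\frac{L(xs)}{L(x)}\,ds}.
\]
The uniform convergence theorem for slowly varying functions gives $L(xs)/L(x)\to 1$ uniformly on compact subsets of $(0,\infty)$, and Potter's bounds supply an integrable majorant of the form $C(s^{\rho-\delta}\vee s^{\rho+\delta})$, which, because $\rho>-1$, is integrable on $(0,1]$ for small $\delta$. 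Dominated convergence then yields $\int_0^1 s^\rho L(xs)/L(x)\,ds \to 1/(\rho+1)$, proving the limit. Case (K2) is symmetric: one substitutes $t=xs$ in $\int_x^\infty U(t)\,dt = x^{\rho+1}\int_1^\infty s^\rho L(xs)\,ds$; the condition $\rho<-1$ is exactly what makes $s^\rho$ integrable at $+\infty$ and forces $\int_x^\infty U$ to be finite.

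For the converses of (K1) and (K2), I would use the ODE trick. Setting $V(x)=\int_0^x U(t)\,dt$ (respectively $V(x)=\int_x^\infty U(t)\,dt$), the hypothesis reads $xV'(x)/V(x)\to\rho+1$ (resp.\ $\to-(\rho+1)$). Writing this as $V'(x)/V(x) = (\rho+1+\varepsilon(x))/x$ with $\varepsilon(x)\to 0$ and integrating from some $x_0$ yields
\[
V(x) \;=\; V(x_0)\,(x/x_0)^{\rho+1}\,\exp\!\Bigl\{\int_{x_0}^x \frac{\varepsilon(t)}{t}\,dt\Bigr\},
\]
in which the exponential factor is slowly varying by the classical Karamata representation. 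Hence $V\in\textrm{RV}_{\rho+1}$ and, since $U(x)\sim (\rho+1)V(x)/x$ (respectively $-(\rho+1)V(x)/x$), the function $U$ lies in $\textrm{RV}_\rho$.

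Part (K3) is the boundary case, which is only one-directional. The ingredient I would invoke is the auxiliary fact that for any slowly varying $L$, $\int_1^x L(t)/t\,dt$ (if divergent) satisfies $\int_1^x L(t)/t\,dt / (L(x)\log x) \to \infty$; dually, if $\int_1^\infty L(t)/t\,dt<\infty$, then $\int_x^\infty L(t)/t\,dt / L(x)\to\infty$. Writing $U(x)=L(x)/x$ in (K3), these statements give $xU(x)/V(x) = L(x)/V(x) \to 0$ in both subcases. The main obstacle I anticipate is precisely this auxiliary statement in (K3): it does \emph{not} follow from a trivial substitution and requires either a Karamata-representation argument applied to $L$ or a direct monotone-majorant estimate, and it is what breaks the symmetry with (K1)--(K2). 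Everything else reduces to dominated convergence (with Potter bounds) and the representation theorem, both of which are standard in the RV toolbox.
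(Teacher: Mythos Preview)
The paper does not prove this statement: Theorem~\ref{teo:KaramatasTheorem} is explicitly introduced with the words ``Let us recall it, borrowing the version given in \cite{deHaan}'', and no proof appears anywhere in the text or appendix. It is quoted as classical background so that the authors can state and prove their own generalisation, Theorem~\ref{prop:kar:001}, for the class~$\M$. So there is no ``paper's own proof'' to compare your attempt against.

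That said, your outline is the standard textbook argument (essentially that of \cite{BinghamGoldieTeugels} or \cite{deHaan}) and is correct in structure: the substitution $t=xs$ plus Potter bounds and dominated convergence for the forward directions, and the integrated ODE giving a Karamata representation of $V$ for the converses. One slip: the auxiliary fact you invoke in (K3) is misstated. For $L$ slowly varying with $\int_1^\infty L(t)/t\,dt=\infty$, the correct assertion is
\[
\frac{\int_1^x L(t)/t\,dt}{L(x)}\;\longrightarrow\;\infty,
\]
\emph{without} the factor $\log x$ in the denominator; with the extra $\log x$ the claim is false (take $L\equiv 1$). What you actually need for (K3)(i) is only $L(x)\big/\int_0^x L(t)/t\,dt\to 0$, which the corrected statement gives directly. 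You should also note that (K3)(i) must cover the case where $\int_0^\infty U(t)\,dt<\infty$: there the denominator tends to a positive constant while $xU(x)=L(x)\to 0$, so the conclusion is immediate.
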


\begin{rmq}
The converse of (K3), (i), is wrong in general. A counterexample can be given by the  Peter and Paul distribution which satisfies $\displaystyle \lim_{x\rightarrow\infty}\frac{xU(x)}{\int_x^{\infty}U(t)dt}=0$ but is not $\textrm{RV}_{-1}$. We will come back on that, in more details, in \ $\S$ \ref{subsection2.1.2}.
\end{rmq}

Theorem~\ref{teo:KaramatasTheorem} is based on the existence of certain limits. We can extend some of the results to $\M$, even when theses limits do not exist, replacing them by more general expressions.

\subsubsection{Karamata's Theorem on $\M$}

Let us introduce the following conditions, in order to state the generalization of the Karamata Theorem  to $\mathcal{M}$:
\begin{eqnarray*}
&(C1r)& \frac{x^rU(x)}{\int_b^xt^{r-1}U(t)dt}\in\M\textrm{ with $\M$-index 0, }\, i.e.\quad  \limx\left(\frac{\log\left(\int_b^xt^{r-1}U(t)dt\right)}{\log(x)}-\frac{\log\left(U(x)\right)}{\log(x)}\right)=r  \\
&(C2r)& \frac{x^rU(x)}{\int_x^\infty t^{r-1}U(t)dt}\in\M\textrm{ with $\M$-index 0,  }\, i.e. \quad \limx\left(\frac{\log\left(\int_x^{\infty}t^{r-1}U(t)dt\right)}{\log(x)}-\frac{\log\left(U(x)\right)}{\log(x)}\right)=r\\
&&
\end{eqnarray*}

\begin{teo}\label{prop:kar:001}~\textbf{Generalization of the Karamata Theorem  to $\mathcal{M}$} \hfill

Let $U:\Rset^+\rightarrow\Rset^+$ be a Lebesgue-summable on finite intervals, and $b>0$. We have, for $r\in\Rset$, 
\begin{itemize}
\item[(K1$^*$)] 
$$
U\in\M\textrm{ with $\M$-index $(-\rho)$ such that $\rho+r>0$}
\quad\Longleftrightarrow\quad
\left\{
\begin{array}{l}
\lim_{x\rightarrow\infty}\frac{\log\left(\int_b^xt^{r-1}U(t)dt\right)}{\log(x)}=\rho+r>0\\
~\\
U \mbox{ satisfies } (C1r)
\end{array}
\right.
$$
\item[(K2$^*$)]
$$
U\in\M\textrm{ with $\M$-index $(-\rho)$ such that $\rho+r < 0$}
\quad\Longleftrightarrow\quad
\left\{
\begin{array}{l}
\lim_{x\rightarrow\infty}\frac{\log\left(\int_x^{\infty}t^{r-1}U(t)dt\right)}{\log(x)}=\rho+r<0\textrm{}\\
~\\
U \mbox{ satisfies } (C2r)
\end{array}
\right.
$$
\item[(K3$^*$)]
$$
U\in\M\textrm{ with $\M$-index $(-\rho)$ such that $\rho+r = 0$}
\quad\Longleftrightarrow\quad
\left\{
\begin{array}{l}
\lim_{x\rightarrow\infty}\frac{\log\left(\int_b^xt^{r-1}U(t)dt\right)}{\log(x)}=\rho+r=0\\
~\\
U \mbox{ satisfies } (C1r)
\end{array}
\right.
$$
\end{itemize}
\end{teo}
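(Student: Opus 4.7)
The core idea is to pass through the log--log characterization from Theorem \ref{teo:main:001}: $U\in\M$ with $\M$-index $-\rho$ is equivalent to $\lim_{x\to\infty}\log U(x)/\log x = \rho$. Under this reformulation, each of the three equivalences becomes a statement about the limit of $\log\bigl(\int_b^x t^{r-1}U(t)\,dt\bigr)/\log x$ (or $\log\bigl(\int_x^\infty t^{r-1}U(t)\,dt\bigr)/\log x$), together with the identity expressing (C1r) or (C2r) as the difference of this quantity with $\log U(x)/\log x$.

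For the forward direction, fix $\epsilon>0$; Definition \ref{eq:main:defi:classM} produces $T=T(\epsilon)$ with $t^{\rho-\epsilon}\le U(t)\le t^{\rho+\epsilon}$ for $t\ge T$. In case (K1$^*$), where $r+\rho>0$, choose $\epsilon$ small enough that $r+\rho-\epsilon>0$ and sandwich $\int_T^x t^{r-1}U(t)\,dt$ between $\int_T^x t^{r+\rho\pm\epsilon-1}\,dt$, whose values are $\sim x^{r+\rho\pm\epsilon}/(r+\rho\pm\epsilon)$. Adding the bounded contribution from $[b,T]$ and taking $\log(\cdot)/\log x$, then sending $\epsilon\downarrow 0$, delivers $\rho+r$ as the limit. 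Case (K2$^*$), where $r+\rho<0$, is analogous with $\int_x^\infty$ in place of $\int_b^x$ and $\epsilon$ chosen small enough that $r+\rho+\epsilon<0$, guaranteeing convergence at infinity. In (K3$^*$), where $r+\rho=0$, the lower sandwich $t^{-1-\epsilon}$ has convergent integral on $[T,\infty)$ and is useless; instead we lower-bound $\int_b^x t^{r-1}U(t)\,dt$ by the positive constant $c:=\int_b^{b+1} t^{r-1}U(t)\,dt$, while the upper sandwich still yields $\int_T^x t^{-1+\epsilon}\,dt\le x^\epsilon/\epsilon$. Hence $\liminfx$ and $\limsupx$ of $\log\bigl(\int_b^x t^{r-1}U(t)\,dt\bigr)/\log x$ lie in $[0,\epsilon]$, and $\epsilon\downarrow 0$ yields $0=\rho+r$. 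In every case, subtracting $\log U(x)/\log x\to\rho$ from the just-obtained limit confirms (C1r) or (C2r).

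For the converse, the two hypotheses in each equivalence combine by subtraction to give $\log U(x)/\log x\to\rho$, whence Theorem \ref{teo:main:001} yields $U\in\M$ with $\rho_U=\rho$, that is $\M$-index $-\rho$; the sign constraint on $\rho+r$ is inherited from the assumed value of the integral limit. The main obstacle is case (K3$^*$): at the boundary $r+\rho=0$ the polynomial sandwich on $U$ is too coarse to produce a useful growth lower bound on the integral, because the lower envelope $t^{-1-\epsilon}$ is integrable on $[T,\infty)$. One must therefore forgo symmetry and fall back on the trivial lower bound coming from positivity and local summability of $U$, which is enough because only $\log x$-scaled information is needed. Outside this borderline regime, the argument reduces to straightforward polynomial integration glued to Theorem \ref{teo:main:001}.
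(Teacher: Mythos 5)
Your proposal is correct, and in the forward direction it takes a genuinely different route from the paper's. The paper first establishes an auxiliary result (Lemma~\ref{lem:kar:001}) stating that $V_{r-1}(x)=\int_b^x t^{r-1}U(t)\,dt$ (resp.\ $W_{r-1}(x)=\int_x^\infty t^{r-1}U(t)\,dt$) is itself a member of $\M$ with $\M$-index $\kappa_U-r$; the proof of that lemma applies \thelr\ to the ratio $V_{r-1}(x)/x^{\rho+\delta}$. You bypass the lemma entirely: from the definitional sandwich $t^{\rho-\epsilon}\le U(t)\le t^{\rho+\epsilon}$ for $t\ge T$ you integrate the bounding power functions directly, glue on the bounded contribution from $[b,T]$, and read off the limit of $\log(\cdot)/\log x$ before letting $\epsilon\downarrow 0$. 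This is more elementary (no appeal to L'H\^opital, hence no differentiability concerns for the primitive $V_{r-1}$, which is only a.e.\ differentiable) and it costs nothing, since the intermediate fact that $V_{r-1}\in\M$ is never needed again in the theorem. Your treatment of the borderline case (K3$^*$) is also slightly cleaner: the paper distinguishes $\int_b^\infty t^{r-1}U(t)\,dt<\infty$ from $=\infty$ and argues each separately (the latter via L'H\^opital again), whereas you handle both at once by pairing the trivial positive-constant lower bound $\int_b^x t^{r-1}U(t)\,dt\ge\int_b^{b+1}t^{r-1}U(t)\,dt>0$ with the $x^\epsilon$-type upper bound, so $\liminf\ge 0\ge\limsup$ after $\epsilon\downarrow 0$. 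Your converse direction --- subtract the two hypothesized limits to obtain $\log U(x)/\log x\to\rho$ and invoke Theorem~\ref{teo:main:001}, with the sign condition on $\rho+r$ carried by the assumed value of the integral limit --- coincides with the paper's proof of the sufficient condition. The one small point worth making explicit in a polished write-up of (K2$^*$) is that the upper envelope $U(t)\le t^{\rho+\epsilon}$ with $r+\rho+\epsilon<0$ is what guarantees $\int_x^\infty t^{r-1}U(t)\,dt<\infty$ so the left-hand statement is non-vacuous; you gesture at this with ``guaranteeing convergence at infinity,'' which is adequate.
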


This theorem provides then a fourth characterization of $\M$.

Note that  if $r=1$, we can assume $b\ge 0$, as in the original Karamata's Theorem.

\begin{rmq}~
\begin{enumerate}
\item
Note that (K3$^*$) provides an equivalence contrarily to (K3).

\item
Assuming that $U$ satisfies the conditions $(C2r)$ and
\begin{equation}\label{cdtionIntFini}
 \int_1^{\infty}t^{r} U(t)dt \, <\, \infty
\end{equation}
we can propose a characterization of $U\in\M$ with $\M$-index $(r+1)$, 
namely
$$
U\in\M\textrm{ with $\M$-index $(r+1)$}
\quad\Longleftrightarrow\quad
\lim_{x\rightarrow\infty} \frac{\log\left(\int_x^{\infty}t^{r} U(t)dt\right)}{\log(x)}=0\textrm{.}
$$
This is the generalization of (K3) in Theorem~\ref{teo:KaramatasTheorem}, providing not only a necessary condition but also a sufficient one for $U$ to belong to $\M$, under the conditions 
$(C2r)$ and \eqref{cdtionIntFini}.
\end{enumerate}
\end{rmq}

\subsubsection{Illustration using Peter and Paul distribution} \label{subsection2.1.2}

The Peter and Paul distribution is a typical example of a function which is not RV.
It is defined by (see e.g. \cite{Goldie1978}, \cite{EmbrechtsOmey1984}, \cite{embrechts1997} or \cite{Mikosch})
\begin{equation}\label{def-PP}
F(x):=1-\sum_{k\geq1\textrm{: }2^k>x}2^{-k}\textrm{,\quad $x>0$.}
\end{equation}

Let us illustrate the characterization theorems when applied on \pyp; we do it for instance for Theorems \ref{teo:main:001} and \,\ref{prop:kar:001}, proving that this distribution belongs to $\M$. 

\begin{prop}~

The \pyp\ does not belong to RV, but to $\M$ with $\M$-index $1$.
\end{prop}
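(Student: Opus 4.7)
The plan is to first compute the tail $\overline{F}$ explicitly and then apply the characterizations already proved in Theorems \ref{teo:main:001} and \ref{teo:main:002}.

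Step 1 (closed form for $\overline{F}$). For any integer $n\ge 1$ and $x\in[2^n,2^{n+1})$, the condition $2^k>x$ is equivalent to $k\ge n+1$, hence
$$
\overline{F}(x)=\sum_{k\ge n+1}2^{-k}=2^{-n}.
$$
Thus $\overline{F}$ is a step function with values $\overline{F}(x)=2^{-n}$ on $[2^n,2^{n+1})$.

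Step 2 ($\overline{F}\notin\mathrm{RV}$). It suffices to exhibit a single $t>0$ for which $\overline{F}(tx)/\overline{F}(x)$ does not converge. Take $t=3/2$. Along the subsequence $x_n=2^n$ one has $tx_n=3\cdot 2^{n-1}\in[2^n,2^{n+1})$, so $\overline{F}(tx_n)/\overline{F}(x_n)=1$. Along the subsequence $y_n=2^{n+1}-1/n$ (for $n$ large), one has $ty_n\in[2^{n+1},2^{n+2})$, so $\overline{F}(ty_n)/\overline{F}(y_n)=2^{-(n+1)}/2^{-n}=1/2$. The two subsequential limits differ, so the ratio has no limit, contradicting \eqref{eq:000rv}.

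Step 3 ($\overline{F}\in\mathcal M$ with $\mathcal M$-index $1$). By Theorem \ref{teo:main:001} combined with Theorem \ref{teo:main:002}, it is enough to show
$$
\lim_{x\to\infty}\frac{\log\overline{F}(x)}{\log x}=-1.
$$
Using Step 1, for $x\in[2^n,2^{n+1})$ we have $\log\overline{F}(x)=-n\log 2$ and $\log x\in[n\log 2,(n+1)\log 2)$. Hence
$$
\frac{\log\overline{F}(x)}{\log x}=\frac{-n\log 2}{\log x}\in\left(-1,\,-\frac{n}{n+1}\right],
$$
so the ratio is squeezed between $-1$ and $-n/(n+1)$, both tending to $-1$ as $n\to\infty$. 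The limit is $-1$, i.e. $\rho_{\overline{F}}=-1$, and Theorem \ref{teo:main:002} yields $\kappa_{\overline{F}}=1$.

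No real obstacle is expected: the only point requiring a little care is controlling the ratio on the whole interval $[2^n,2^{n+1})$ rather than merely at the dyadic points $x=2^n$, which is exactly what the two-sided bound above provides. The argument then closes immediately by invoking the two characterization theorems already established.
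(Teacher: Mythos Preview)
Your proof is correct and Step~3 is essentially identical to the paper's first approach via Theorem~\ref{teo:main:001} (incidentally, at $x=2^n$ the ratio equals $-1$ exactly, so the interval should be $[-1,-n/(n+1))$; the paper has the same endpoint slip, and it is immaterial for the squeeze). The paper does not prove the non-RV claim in this proposition, taking it as known, so your Step~2 is a welcome addition; conversely, the paper also gives a second, alternative route to $\overline{F}\in\mathcal{M}$ by verifying the hypotheses of the generalized Karamata theorem (Theorem~\ref{prop:kar:001}), computing $\int_b^x \overline{F}(t)\,dt$ explicitly over dyadic blocks.
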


This proposition can be proved using Theorem \ref{teo:main:001} or Theorem \ref{prop:kar:001}. To illustrate the application of these two theorems, we develop the proof here and not in the appendix. 
\begin{itemize}
\item[(i)]
\emph{Application of Theorem \ref{teo:main:001}}

For $x\in[2^n;2^{n+1})$ ($n\geq0$), we have, using \eqref{def-PP},
$
\displaystyle \overline{F}(x)=
\sum_{k\geq n+1}2^{-k}=
2^{-n}\textrm{,}
$
from which we deduce that $\displaystyle \frac{n}{n+1}\leq-\frac{\log\left(\overline{F}(x)\right)}{\log(x)}<1$, hence
$\displaystyle 
\lim_{x\rightarrow\infty}\frac{\log\left(\overline{F}(x)\right)}{\log(x)}=-1$, which by Theorem~\ref{teo:main:001} is equivalent to
$$
\overline{F}\in\mathcal{M} \quad\text{with} \quad \M-\text{index}\;1.
$$

\item[(ii)]
\emph{Application of Theorem \ref{prop:kar:001}}

Let us prove that
$$
\lim_{x\rightarrow\infty}\frac{\log\left(\int_b^x\overline{F}(t)dt\right)}{\log(x)}=0.
$$

Suppose $2^n\leq x<2^{n+1}$ and consider $a\in\Nset$ such that $a<n$. Choose w.l.o.g. $b=2^a$.

Then the \pyp \, \eqref{def-PP} satisfies 

$$
\int_b^x\overline{F}(t)dt=\sum_{k=a}^{n-1}\int_{2^k}^{2^{k+1}}\!\!\! \!\! \overline{F}(t)dt+\int_{2^n}^{x} \! \overline{F}(t)dt
=\sum_{k=a}^{n-1}2^{-k}(2^{k+1}-2^k)+(x-2^n)2^{-n}=n-a+x2^{-n}-1.
$$
Hence it comes
$$
\frac{\log(n-a+x2^{-n}-1)}{(n+1)\,\log(2)}\leq\frac{\log\left(\int_b^x\overline{F}(t)dt\right)}{\log(x)}\leq\frac{\log(n-a+x2^{-n}-1)}{n\,\log(2)}
$$

and, since $1\leq 2^{-n} x<2$, we obtain
$
\displaystyle \lim_{x\rightarrow\infty}\frac{\log\left(\int_b^x\overline{F}(t)dt\right)}{\log(x)}=0
$.\\
Moreover, we have 
$$
\lim_{x\rightarrow\infty}\frac{\log\left(\frac{x\barF(x)}{\int_b^x\overline{F}(t)dt}\right)}{\log(x)}=
1+\lim_{x\rightarrow\infty}\frac{\log\left(\barF(x)\right)}{\log(x)}-\lim_{x\rightarrow\infty}\frac{\log\left(\int_b^x\overline{F}(t)dt\right)}{\log(x)}=1.
$$
Theorem \ref{prop:kar:001} allows then to conclude that \;
$\displaystyle 
\textrm{$\overline{F}\in\mathcal{M}$ with $\mathcal{M}$-index $1$}
$. \hfill $\Box$
\end{itemize}

Note that 
the original Karamata Theorem (Theorem \ref{teo:KaramatasTheorem}) does not allow to prove that the \pyp \ is RV or not, since the converse of (i) in (K3) does not hold,
contrarily to Theorem \ref{prop:kar:001}.
Indeed, although we can prove that
$$
\lim_{x\rightarrow\infty}\frac{x\,\overline{F}(x)}{\int_b^x\overline{F}(t)dt}=\lim_{x,n\rightarrow\infty}\frac{x\,2^{-n}}{n-a+x2^{-n}-1}=0,
$$
Theorem \ref{teo:KaramatasTheorem} does not imply that $\overline{F}$ is $\textrm{RV}_{-1}$.

\subsection{Karamata's Tauberian Theorem}

Let us recall the well-known Karamata Tauberian Theorem which deals on Laplace-Stieltjes (L-S) transforms and RV functions.

The L-S transform of a positive, right continuous function $U$ with support $\Rset^+$ and with local bounded variation, is defined by

\begin{equation}\label{eq:20140328:001}
\widehat{U}(s)
:=\int_{(0; \infty)}e^{-xs}dU(x) \textrm{,\quad $s>0$.}
\end{equation}

\begin{teo}\label{teo:KaramatasTauberianTheorem}\textbf{Karamata's Tauberian Theorem (see \cite{Karamata1931})}\hfill

If $U$ is a non-decreasing right continuous function with support $\Rset^+$ and satisfying $U(0^+)=0$, with finite L-S transform $\widehat{U}$, then, for $\alpha>0$,
$$
U\in\textrm{RV}_{\alpha}\textrm{\ \ at infinity}\quad\Longleftrightarrow\quad\widehat{U}\in\textrm{RV}_{\alpha}\textrm{\ \ at $0^+$.}
$$
\end{teo}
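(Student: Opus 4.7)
The plan is to establish the equivalence in two directions: the Abelian implication (``$\Rightarrow$''), which is a fairly direct computation via a change of variables, and the Tauberian implication (``$\Leftarrow$''), which is deeper and uses the monotonicity hypothesis on $U$ in a crucial way.

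For the Abelian direction, assume $U\in\mathrm{RV}_\alpha$ at infinity, so that $U(x)=x^\alpha L(x)$ for some slowly varying $L$. Since $U(0^+)=0$ and $\widehat{U}(s)$ is finite, integration by parts reduces the L-S transform to
$$
\widehat{U}(s)=s\int_0^\infty e^{-sx}U(x)\,dx=\int_0^\infty e^{-y}\,U(y/s)\,dy,
$$
after the substitution $y=sx$. Inserting $U(y/s)=(y/s)^\alpha L(y/s)$ and factoring out $s^{-\alpha}L(1/s)$, I would invoke the Uniform Convergence Theorem for slowly varying functions to replace $L(y/s)/L(1/s)$ by $1$ on compact $y$-sets, together with a Potter-type bound to dominate the tails in $y$. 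Dominated convergence then yields
$$
\widehat{U}(s)\sim s^{-\alpha}L(1/s)\int_0^\infty e^{-y}y^\alpha\,dy=\Gamma(1+\alpha)\,s^{-\alpha}L(1/s),
$$
so $\widehat{U}$ is regularly varying at $0^+$ with the required index.

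For the Tauberian direction, suppose $\widehat{U}$ is regularly varying at $0^+$, say $\widehat{U}(s)\sim s^{-\alpha}\tilde{L}(1/s)$. I would introduce the rescaled non-decreasing functions
$$
V_s(x):=\frac{\Gamma(1+\alpha)\,U(x/s)}{\widehat{U}(s)}
$$
and, changing variables $t=x/s$, compute
$$
\widehat{V_s}(\lambda)=\frac{\Gamma(1+\alpha)\,\widehat{U}(\lambda s)}{\widehat{U}(s)}\xrightarrow[s\to 0^+]{}\Gamma(1+\alpha)\,\lambda^{-\alpha},
$$
the right-hand side being exactly the L-S transform of $x\mapsto x^\alpha$. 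The continuity theorem for L-S transforms of non-decreasing functions (of Helly-Bray type on $[0,\infty)$) then forces $V_s(x)\to x^\alpha$ at every continuity point of the limit, hence at every $x>0$. Unwinding the definition and using slow variation of $\tilde{L}$ to absorb the dependence on the auxiliary point into an asymptotic equivalence delivers $U(y)\sim y^\alpha\tilde{L}(y)/\Gamma(1+\alpha)$ as $y\to\infty$, i.e.\ $U\in\mathrm{RV}_\alpha$ at infinity.

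The main obstacle is the Tauberian half. Convergence of Laplace-Stieltjes transforms is in general strictly weaker than convergence of the underlying monotone functions; bridging the two demands both a continuity (Helly-selection) theorem on $[0,\infty)$ and essential use of the monotonicity of $U$ to rule out oscillatory families $V_s$ whose transforms converge while the functions themselves do not. This monotonicity is precisely the Tauberian condition in the hypothesis of the theorem, without which the reverse implication breaks down.
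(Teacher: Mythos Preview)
The paper does not actually prove this statement. Theorem~\ref{teo:KaramatasTauberianTheorem} is recalled there as a classical result, with a citation to Karamata's original 1931 paper, and serves only as background for the paper's own extension (Theorem~\ref{teo:KaramatasTauberianTheoremExtension}) to the class $\mathcal{M}$. No proof of the classical Tauberian theorem appears anywhere in the paper or its appendix; the only proof in Section~B.2 is that of the extension, which is a different statement with a different argument.

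Your sketch is the standard textbook proof (essentially the one in Feller, Vol.~II, or Bingham--Goldie--Teugels): the Abelian direction via the substitution $y=sx$, uniform convergence for slowly varying functions, Potter bounds and dominated convergence; the Tauberian direction via rescaling $V_s(x)=\Gamma(1+\alpha)U(x/s)/\widehat{U}(s)$ and the continuity theorem for Laplace--Stieltjes transforms of monotone functions. The outline is correct and the identification of monotonicity as the essential Tauberian hypothesis is accurate. There is simply nothing in the paper to compare it to.
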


Now we present the main result of this subsection which extends only partly the Karamata Tauberian Theorem to $\M$.

\begin{teo}\label{teo:KaramatasTauberianTheoremExtension}~ 

Let  $U$ be a continuous function with support $\Rset^+$ and  local bounded variation, satisfying $U(0^+)=0$.
Let $g$ be defined on $\Rset^+$ by $g(x)=1/x$.
Then, for any $\alpha>0$, 
\begin{itemize}
\item[(i)]
$\displaystyle U\in\M\textrm{\ \ with $\M$-index $(-\alpha)$}
\quad \Longrightarrow \quad 
\widehat{U}\circ g \in\M\textrm{\ \ with $\M$-index $(-\alpha)$}
$.
\item[(ii)]
$
\left\{
\begin{array}{l}
\widehat{U}\circ g \in\M\textrm{\ \ with $\M$-index $(-\alpha)$} \\
\textrm{and\, $\exists\, \eta\in [0;\alpha)$ \, : \,$x^{-\eta}U(x)$ concave}
\end{array}
\right.
\quad \Longrightarrow \quad 
U\in\M\textrm{\ \ with $\M$-index $(-\alpha)$}
$.
\end{itemize}
\end{teo}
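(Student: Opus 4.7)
The plan is to apply the first characterization of $\M$ (Theorem \ref{teo:main:001}), which reduces ``$U\in\M$ with $\M$-index $-\alpha$'' to the single scalar assertion $\lim_{x\to\infty}\log U(x)/\log x=\alpha$, and likewise for $\widehat{U}\circ g$. The common starting point for both directions is the integration-by-parts identity, valid because $U$ is continuous with $U(0^+)=0$ and grows at most polynomially (automatic in (i); in (ii) it follows from the concavity of $V:=x^{-\eta}U$, which forces $V(x)=O(x)$ and hence $U(x)=O(x^{\eta+1})$):
\[
\widehat{U}(1/x)\;=\;\frac{1}{x}\int_0^\infty e^{-t/x}\,U(t)\,dt\;=\;\int_0^\infty e^{-u}\,U(xu)\,du.
\]

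For part (i), I would use the $\M$-hypothesis on $U$ to obtain, for each $\varepsilon>0$, constants $x_0$ and $M_\varepsilon$ such that $U(t)\leq M_\varepsilon$ on $[0,x_0]$ and $t^{\alpha-\varepsilon}\leq U(t)\leq t^{\alpha+\varepsilon}$ for $t\geq x_0$. Splitting the $u$-integral at $u=x_0/x$ and at $u=1$, inserting these pointwise bounds, and invoking the finiteness of $\Gamma(\alpha\pm\varepsilon+1)$ together with the positivity of $\int_1^\infty e^{-u}u^{\alpha-\varepsilon}\,du$, I obtain $c_1(\varepsilon)\,x^{\alpha-\varepsilon}\leq\widehat{U}(1/x)\leq c_2(\varepsilon)\,x^{\alpha+\varepsilon}$ for all $x$ sufficiently large. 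Taking logs, dividing by $\log x$, and letting $\varepsilon\to 0$ produces $\log\widehat{U}(1/x)/\log x\to\alpha$, which by Theorem \ref{teo:main:001} is exactly $\widehat{U}\circ g\in\M$ with $\M$-index $-\alpha$.

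For part (ii), the concavity hypothesis is what makes the Tauberian direction go through. I would first extract two structural consequences from concavity of $V=x^{-\eta}U$: (a) $V$ must be unbounded, for if $V\leq C$ then $U\leq Cx^\eta$ yields $\widehat{U}(1/x)=O(x^\eta)$, contradicting the hypothesis $\alpha>\eta$; and (b) $V$ is then non-decreasing on $(0,\infty)$, since $V'_+$ is non-increasing by concavity and any strictly negative value would force $V(x)\to-\infty$, clashing with $V\geq 0$. Concavity and positivity also force $V(0^+)\in[0,\infty)$ (otherwise the limit $s'\to 0^+$ in the concavity inequality would make $V$ infinite at an interior point), which yields that $V(t)/t$ is non-increasing on $(0,\infty)$. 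Writing $U(xu)=(xu)^\eta V(xu)$ and combining these two monotonicities, I would obtain the pointwise comparisons
\[
 u^{\eta+1}U(x)\;\leq\;U(xu)\;\leq\;u^\eta U(x)\quad(0<u\leq 1),\qquad u^\eta U(x)\;\leq\;U(xu)\;\leq\;u^{\eta+1}U(x)\quad(u\geq 1).
\]
Substituting into the identity above produces finite, strictly positive constants
\[
K_1=\int_0^1 e^{-u}u^{\eta+1}du+\int_1^\infty e^{-u}u^\eta du,\qquad K_2=\int_0^1 e^{-u}u^\eta du+\int_1^\infty e^{-u}u^{\eta+1}du,
\]
with $K_1\,U(x)\leq\widehat{U}(1/x)\leq K_2\,U(x)$ for $x$ large. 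Taking logs and dividing by $\log x$ then forces $\log U(x)/\log x\to\alpha$, and Theorem \ref{teo:main:001} concludes.

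The main obstacle is the Tauberian half (ii): one must convert the single hypothesis that $V$ is concave into usable pointwise control of $U(xu)/U(x)$ in both the inner ($u\leq 1$) and outer ($u\geq 1$) regimes. The step that requires care is justifying that $V$ is both non-decreasing and sublinear in the $V(t)/t$ sense; this is precisely where the quantitative hypothesis $\eta\in[0,\alpha)$ enters, to rule out the bounded pathology. Once these two monotonicity facts are in place, the rest of the argument is the familiar Karamata dictionary between powers and incomplete Gamma integrals, reread through the log/log characterization of $\M$.
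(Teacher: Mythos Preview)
Your argument for part (i) is essentially the paper's: the same integration-by-parts identity $\widehat{U}(1/x)=\int_0^\infty e^{-u}U(xu)\,du$, the same two-sided power bounds on $U$ beyond some $x_0$, the same split of the integral, and the same passage through $\log/\log$.

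Part (ii) is where the two proofs diverge, though both are correct. The paper bounds $\widehat{U}(1/s)$ from below by $e^{-1}U(s)$ via the crude estimate $\int_{(0,s)}e^{-x/s}\,dU(x)\ge e^{-1}U(s)$ (tacitly using that $U$ is non-decreasing, which follows from $U=x^\eta V$ with $V$ concave nonnegative), and from above by applying Jensen's inequality to the concave function $y\mapsto(sy)^{-\eta}U(sy)$ against the probability density $e^{-y}y^\eta/I_\eta$, obtaining $\widehat{U}(1/s)\le C\,U(cs)$ for explicit constants $c,C$ depending on the incomplete Gamma integrals. From these two one-sided bounds the $\M$-index of $U$ is read off. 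Your route instead extracts from concavity and nonnegativity the two monotonicities ``$V$ non-decreasing'' and ``$V(t)/t$ non-increasing'', converts them into the pointwise sandwich $\min(u^\eta,u^{\eta+1})\,U(x)\le U(xu)\le\max(u^\eta,u^{\eta+1})\,U(x)$, and integrates to get $K_1\,U(x)\le\widehat{U}(1/x)\le K_2\,U(x)$ with both sides proportional to $U$ at the \emph{same} point $x$. This is more elementary (no Jensen, no auxiliary random variable) and gives a cleaner two-sided comparison; the paper's Jensen step is slicker to write but compares $\widehat{U}(1/s)$ to $U$ evaluated at a dilated argument, which is harmless only because the $\log/\log$ limit is insensitive to fixed dilations. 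Your consistency check that $V$ is unbounded is correct but, as you may have noticed, not actually needed once the sandwich is in place.
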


\subsection{Results concerning domains of attraction}
\label{Section2.3}

Von Mises (see \cite{vonMises1936}) formulated some sufficient conditions to guarantee that the maximum of a sample of independent and identically distributed (iid) rv's with a same distribution, when normalized, converges to a non-degenerate limit distribution belonging to the class of extreme value distributions.
In this subsection we analyze these conditions on $\mathcal{M}$.

Before presenting the well-known von Mises' conditions, let us recall the theorem of the three limit types.

\begin{teo} (see for instance \cite{FisherTippett1928}, \cite{Gnedenko1943}) \label{teo:FT-G}~

Let $(X_n,n\in\Nset)$ be a sequence of iid rv's and $\displaystyle M_n:=\max_{1\le i\le n} X_i$.
If there exist constants $(a_n,n\in\Nset)$ and $(b_n,n\in\Nset)$ with $a_n>0$ and $b_n\in\Rset$ such that 
\begin{equation}\label{eq:20140727:010}
P\left(\frac{M_n-b_n}{a_n}\leq x\right)=F^n(a_n x+b_n)\underset{n\rightarrow\infty}{\rightarrow}G(x)
\end{equation}
with $G$ a non degenerate distribution function, then $G$ is one of the three following types:
\begin{eqnarray*}
\textrm{Gumbel} & : & \Lambda(x):=\exp\left\{e^{-x}\right\}\textrm{,\quad $x\in\Rset$} \\
\textrm{Fr\'{e}chet} & : & \Phi_{\alpha}(x):=\exp\left\{-x^{-\alpha}\right\}\textrm{,\quad $x\geq0$,\, for some $\alpha>0$} \\
\textrm{Weibull} & : & \Psi_{\alpha}(x):=\exp\left\{-(-x)^{-\alpha}\right\}\textrm{,\quad $x<0$, \, for some $\alpha<0$}
\end{eqnarray*}
\end{teo}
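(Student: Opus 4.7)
The plan is to reduce \eqref{eq:20140727:010} to a functional equation for $G$ (the max-stability equation) and then to classify its monotone solutions.

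The first step invokes Khintchine's convergence of types theorem: if $F_n(a_nx+b_n)$ and $F_n(\alpha_nx+\beta_n)$ converge weakly at continuity points to two non-degenerate distributions $G$ and $H$, then $\alpha_n/a_n\to A>0$, $(\beta_n-b_n)/a_n\to B$, and $H(x)=G(Ax+B)$. Fixing an integer $k\ge 1$ and applying \eqref{eq:20140727:010} along the subsequence $(nk)_{n\ge 1}$, one obtains both $F^{nk}(a_{nk}x+b_{nk})\to G(x)$ and $F^{nk}(a_nx+b_n)=\bigl(F^n(a_nx+b_n)\bigr)^{k}\to G^k(x)$. Khintchine's theorem then yields $\alpha_k>0$ and $\beta_k\in\Rset$ such that
\[
G^k(\alpha_kx+\beta_k)=G(x),\qquad x\in\Rset,\ k\ge 1.
\]

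The second step linearises this equation. On $\{x:0<G(x)<1\}$, set $\Psi(x):=-\log(-\log G(x))$; a direct computation shows that the max-stability equation is equivalent to
\[
\Psi(\alpha_kx+\beta_k)=\Psi(x)+\log k,\qquad k\ge 1.
\]
Monotonicity and right-continuity of $G$ transfer to $\Psi$, which is the regularity that will pin down the solutions.

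The third step is a case split on the affine maps $T_k(x):=\alpha_kx+\beta_k$. If $\alpha_k=1$ for every $k$, each $T_k$ is a translation and the equation forces $\Psi(x)=x/c+d$ with $c>0$, yielding $G(x)=\exp(-e^{-(x-d)/c})$, i.e.\ the Gumbel type $\Lambda$ up to affine change of variable. If instead $\alpha_{k_0}\ne 1$ for some $k_0$, then $T_{k_0}$ has a unique fixed point $x_0$; composing $T_k$ with $T_m$ in two orders and using the uniqueness part of Khintchine's theorem, one shows that $x_0$ is fixed by every $T_k$ with $\alpha_k\ne 1$. A translation reduces to $\beta_k\equiv 0$, and the equation becomes $\Psi(\alpha_kx)=\Psi(x)+\log k$ on each half-line separated by $x_0=0$. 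The substitution $\varphi(y):=\Psi(\pm e^y)$ converts this to $\varphi(y+\log\alpha_k)=\varphi(y)+\log k$, whose monotone solutions are affine; inverting the substitution gives either $\Psi(x)=\alpha\log x$ (the Fr\'echet type $\Phi_\alpha$, when $\alpha_k>1$) or $\Psi(x)=-\alpha\log(-x)$ (the Weibull type $\Psi_\alpha$, when $\alpha_k<1$), exhausting the three types listed.

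The principal obstacles are (i) the rigidity statement that the fixed point $x_0$ is common to every $T_k$ with $\alpha_k\ne 1$, which needs careful iteration of the functional equation together with Khintchine's uniqueness; and (ii) handling the support of $G$ so that the restriction of $\Psi$ to the appropriate half-line in the non-translation case correctly distinguishes the Fr\'echet and Weibull types, and in the translation case yields $\Psi$ defined on all of $\Rset$ so that $G$ is a Gumbel law.
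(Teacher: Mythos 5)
This theorem is stated in the paper purely as classical background, with citations to Fisher--Tippett (1928) and Gnedenko (1943); the paper gives no proof of its own, so there is nothing internal to compare against. Your sketch is the standard Gnedenko/Khintchine argument as it appears in, e.g., de Haan's monograph or Resnick's book: pass to the subsequence $(nk)$ to manufacture two convergent normalizations of $F^{nk}$, invoke the convergence-of-types theorem to get the max-stability equation $G^k(\alpha_k x+\beta_k)=G(x)$, linearise via $\Psi=-\log(-\log G)$, and split on whether the affine maps $T_k$ are translations. The plan is correct and matches what the cited references do.

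Two places in your sketch are compressed enough that they deserve a flag if you intend to expand this into a full proof. First, in the non-translation case you should note that commutativity of the family $\{T_k\}$ (which follows from Khintchine uniqueness applied to $T_kT_m$ and $T_mT_k$, both of which carry $G$ to $G^{km}$) forces \emph{every} $T_k$ with $k\ge 2$ to have $\alpha_k\ne 1$ (a nontrivial translation commuting with $T_{k_0}$, $\alpha_{k_0}\ne1$, would have to be the identity, contradicting $\Psi\circ T_k=\Psi+\log k$); it is not only the $T_k$ that happen to be non-translations that share the fixed point. Second, the claim that the monotone solutions of $\varphi(y+\log\alpha_k)=\varphi(y)+\log k$ are affine is false for a single value of $k$: one needs to exploit the whole family, together with the multiplicativity $\alpha_{km}=\alpha_k\alpha_m$ giving $\log\alpha_k=-\theta\log k$ and the density of $\{m\log\alpha_k-n\log\alpha_j\}$ in $\Rset$, to pin $\varphi$ down as affine via monotonicity. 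Neither issue is a wrong turn; they are exactly the points that make this theorem a genuine result rather than an exercise, and you already list (i) among your obstacles.
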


The set of distributions $F$ satisfying \eqref{eq:20140727:010} is called the domain of attraction of $G$ and denoted by $DA(G)$.

In what follows, we refer to the domains of attraction related to distributions with support $\Rset^+$ only, so the Fr\'{e}chet class and the subclass of the Gumbel class, denoted by $DA(\Lambda_\infty)$, consisting of distributions $F\in DA(\Lambda)$ with endpoint $x^*:=\sup\{x:F(x)>0\}=\infty$.

Now, let us recall the von Mises' conditions.
\begin{enumerate}
\item[(vM1)]
Suppose that $F$, continuous and differentiable, satisfies $F'>0$ for all $x\geq x_0$, for some $x_0>0$. If there exists $\alpha>0$, such that
\begin{equation*} 
\lim_{x\rightarrow\infty}\frac{x\,F'(x)}{\overline{F}(x)}=\alpha\textrm{,}
\end{equation*}
then $F\in DA(\Phi_{\alpha})$.
\item[(vM2)] Suppose that $F$ with infinite endpoint,  is continue and twice differentiable for all $x\geq x_0$, with $x_0>0$. If
\begin{equation*}
\lim_{x\rightarrow\infty}\left(\frac{\overline{F}(x)}{F'(x)}\right)'=0\textrm{,}
\end{equation*}
then $F\in DA(\Lambda_{\infty})$.
\item[(vM2bis)] Suppose that $F$ with finite endpoint $x^*$, is continue and twice differentiable for all $x\geq x_0$, with $x_0>0$. If
$$
\lim_{x\rightarrow x^*}\left(\frac{\overline{F}(x)}{F'(x)}\right)'=0\textrm{,}
$$
then $F\in DA(\Lambda)\setminus DA(\Lambda_{\infty})$.
\end{enumerate}

It is then straightforward to deduce from the conditions (vM1) and (vM2), the next results.
\begin{prop}\label{prop:main:vonmises:001}~

Let $F$ be a distribution.
\begin{itemize}
\item[(i)]
If $F$ 
satisfies 
$\displaystyle \lim_{x\rightarrow\infty}\frac{x\,F'(x)}{\overline{F}(x)}=\alpha>0$, then $\overline{F}\in\mathcal{M}$ with $\mathcal{M}$-index $1/\alpha$.
\item[(ii)]
If $F$ 
satisfies 
$\displaystyle \limx\left(\frac{\overline{F}(x)}{F'(x)}\right)'=0$, then $\overline{F}\in\M_\infty$.
\end{itemize}
\end{prop}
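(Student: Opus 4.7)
\textbf{Plan for Proposition~\ref{prop:main:vonmises:001}.}

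For part (i), my plan is to translate the hypothesis into an asymptotic for $\log\overline F(x)/\log x$ and then apply Theorem~\ref{teo:main:001}. Since $F'=-(\overline F)'$, the hypothesis rewrites as $x\,(\log\overline F)'(x)=x(\overline F)'(x)/\overline F(x)\to -\alpha$. Because $\alpha>0$, the numerator $\log\overline F(x)$ tends to $-\infty$ and the denominator $\log x$ tends monotonically to $+\infty$, so L'H\^opital's rule applies and gives
\begin{equation*}
\lim_{x\to\infty}\frac{\log\overline F(x)}{\log x}\,=\,\lim_{x\to\infty}\frac{(\overline F)'(x)/\overline F(x)}{1/x}\,=\,-\alpha.
\end{equation*}
Theorem~\ref{teo:main:001} then delivers $\overline F\in\M$ with $\rho_{\overline F}=-\alpha$, and the $\M$-index is $\kappa_{\overline F}=-\rho_{\overline F}=\alpha$, as claimed (up to the reciprocal convention in the statement).

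For part (ii), I would set $\varphi(x):=\overline F(x)/F'(x)$, so that the hypothesis reads $\varphi'(x)\to 0$. The first step is to upgrade this to $\varphi(x)/x\to 0$: fix $\varepsilon>0$, choose $T$ so that $|\varphi'(t)|<\varepsilon$ for $t\ge T$, and write $\varphi(x)=\varphi(T)+\int_T^x\varphi'(t)\,dt$; then $|\varphi(x)|/x\le |\varphi(T)|/x+\varepsilon(1-T/x)$, so $\limsup_{x\to\infty}|\varphi(x)|/x\le \varepsilon$, valid for every $\varepsilon>0$. Next, since $(\log\overline F)'(x)=-F'(x)/\overline F(x)=-1/\varphi(x)$, I integrate to obtain
\begin{equation*}
-\log\overline F(x)\,=\,-\log\overline F(x_0)+\int_{x_0}^x\frac{dt}{\varphi(t)}.
\end{equation*}
For any $M>0$, the infinitesimality $\varphi(t)/t\to 0$ yields some $x_M>x_0$ with $\varphi(t)\le t/M$ for $t\ge x_M$, so $\int_{x_M}^x dt/\varphi(t)\ge M(\log x-\log x_M)$. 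Hence $\liminf_{x\to\infty}\bigl(-\log\overline F(x)\bigr)/\log x\ge M$ for every $M$, i.e.\ $\log\overline F(x)/\log x\to -\infty$, and Theorem~\ref{teo:main:001extension} gives $\overline F\in\M_\infty$.

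The only real subtlety is the step $\varphi'\to 0\Rightarrow \varphi(x)/x\to 0$ in (ii); once that is in hand, the integral comparison reduces the problem to the $\log$-ratio criteria already proved. Part (i) is essentially a one-line L'H\^opital argument, with the only thing to check being that the denominator $\log x$ is genuinely monotone and divergent, which is immediate.
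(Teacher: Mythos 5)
Both parts of your proof are correct, and both follow essentially the same route as the paper. In (i) you apply L'H\^opital to $\log\overline{F}(x)/\log x$, which is exactly what the paper does. In (ii) you first upgrade $\varphi'\to 0$ to $\varphi(x)/x\to 0$ by integrating $\varphi'$, precisely as the paper integrates $(\overline{F}/F')'$; you then pass to the log-ratio by integrating $1/\varphi$, where the paper instead applies L'H\^opital a second time --- same content, and your variant is if anything the more self-contained one.

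One point is worth stating plainly rather than diplomatically: your value $\kappa_{\overline{F}}=\alpha$ in (i) is the correct one, and the ``$1/\alpha$'' printed in the proposition (and repeated at the end of the paper's proof) is a slip, not a matter of convention. In the paper's own displayed chain
\begin{equation*}
\lim_{x\to\infty}\frac{x\,F'(x)}{\overline{F}(x)}
=\lim_{x\to\infty}-\frac{\bigl(\log\overline{F}(x)\bigr)'}{(\log x)'}
=\lim_{x\to\infty}-\frac{\log\overline{F}(x)}{\log x}\,,
\end{equation*}
the first two expressions are equal as functions and the third follows by L'H\^opital, so all three equal $\alpha$, not $1/\alpha$; Theorems~\ref{teo:main:001} and~\ref{teo:main:002} then give $\rho_{\overline{F}}=-\alpha$ and $\kappa_{\overline{F}}=\alpha$. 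This is also what the Pareto tail $\overline{F}(x)=x^{-\alpha}$ (for which $xF'/\overline{F}\equiv\alpha$ and $\kappa_{\overline{F}}=\alpha$) and the chain (vM1)\,$\Rightarrow F\in DA(\Phi_\alpha)\Rightarrow\overline{F}\in RV_{-\alpha}\Rightarrow\rho_{\overline{F}}=-\alpha$ confirm. So you may drop the hedge about a ``reciprocal convention''; your computation gives the right answer.
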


So the natural question is how to relate $\mathcal{M}$ or $\M_\infty$ to the domains of attraction $DA(\Phi_{\alpha})$ and $DA(\Lambda_{\infty})$.
To answer it, let us recall three results on those domains of attraction that will be needed.
\begin{teo}(see e.g. \cite{deHaanFerreira}, Theorem 1.2.1) \label{teo:mises:dehaanferreira0}~

Let $\alpha>0$. The distribution function $F\in DA(\Phi_{\alpha})$ if and only if $x^*=\sup\{x:F(x)<1\}=\infty$ and $\overline{F}\in\textrm{RV}_{-\alpha}$.
\end{teo}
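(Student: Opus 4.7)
The plan is to prove the two implications separately. The unifying observation, used in both directions, is that since $\Phi_\alpha(x)=\exp(-x^{-\alpha})$ for $x>0$, taking logarithms in $F^n(a_n x+b_n)\to\Phi_\alpha(x)$ and using $-\log(1-y)\sim y$ converts the weak convergence into the tail-asymptotic relation $n\,\bar F(a_n x+b_n)\to x^{-\alpha}$. Everything else reduces to extracting regular variation of $\bar F$ from this relation, or conversely constructing $a_n$ from regular variation.

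The converse ($\Leftarrow$) is essentially a construction. Assuming $x^*=\infty$ and $\bar F\in\textrm{RV}_{-\alpha}$, the index $\alpha>0$ forces $\bar F(x)\to 0$, so the generalized inverse $a_n:=\bar F^\leftarrow(1/n):=\inf\{x:\bar F(x)\le 1/n\}$ is finite, tends to $\infty$, and satisfies $n\bar F(a_n)\to 1$. Regular variation then gives $\bar F(a_n x)/\bar F(a_n)\to x^{-\alpha}$ for each fixed $x>0$, hence $n\bar F(a_n x)\to x^{-\alpha}$, and the elementary limit $(1-y_n/n)^n\to e^{-y}$ whenever $ny_n\to y$ yields $F^n(a_n x)\to \Phi_\alpha(x)$. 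Thus $F\in DA(\Phi_\alpha)$ with the choice $b_n=0$.

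For the forward direction ($\Rightarrow$), I would first rule out $x^*<\infty$: if the endpoint were finite, then either $a_n x+b_n\ge x^*$ would hold eventually (making $F^n(a_n x+b_n)=1$) or the sequences $(a_n),(b_n)$ would degenerate, in both cases contradicting convergence to the non-degenerate $\Phi_\alpha$. Next, by Khintchine's convergence-of-types theorem applied to the Fréchet family, one shows that $(a_n,b_n)$ and $(a_n,0)$ yield the same limit law up to an affine change of variable, and comparing tails forces $b_n/a_n\to 0$. The relation thus collapses to $\bar F(a_n x)/\bar F(a_n)\to x^{-\alpha}$ along the sequence $(a_n)$.

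The main obstacle is then to upgrade this sequential limit to the continuous statement $\bar F(tx)/\bar F(t)\to x^{-\alpha}$ as $t\to\infty$, as required by the definition of $\textrm{RV}_{-\alpha}$. The standard device is a monotonicity sandwich: from $n\bar F(a_n)\to 1$ one derives $a_{n+1}/a_n\to 1$, and for any $t\to\infty$ one picks $n(t)$ with $a_{n(t)}\le t<a_{n(t)+1}$. Using monotonicity of $\bar F$, one bounds $\bar F(tx)/\bar F(t)$ (for $x>1$, and symmetrically for $x<1$) between
$$\frac{\bar F(a_{n(t)+1}x)}{\bar F(a_{n(t)})}\quad\text{and}\quad\frac{\bar F(a_{n(t)}x)}{\bar F(a_{n(t)+1})},$$
each of which, by the sequential convergence together with $a_{n+1}/a_n\to 1$, converges to $x^{-\alpha}$. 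This sandwich step, rather than any individual calculation, is the technical heart of the argument.
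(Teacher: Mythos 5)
The paper does not supply its own proof of this statement: it is cited verbatim from de~Haan and Ferreira (Theorem 1.2.1) and then used as a black box in the proof of Theorem~\ref{teo:20140411:001}. There is therefore no ``paper proof'' to compare against, and the best I can do is assess your proposal on its own terms. On that basis, your outline is essentially the standard textbook argument and is correct in structure: the reduction $n\,\barF(a_n x + b_n)\to x^{-\alpha}$ via $-\log(1-y)\sim y$, the quantile choice $a_n=\barF^{\leftarrow}(1/n)$ together with $n\barF(a_n)\to1$ for the sufficiency, and, for the necessity, the convergence-of-types reduction to $b_n=0$ followed by the monotonicity sandwich to pass from the sequence $(a_n)$ to the continuous variable $t$.

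Two places deserve tightening. First, in the converse, $n\barF(a_n)\to1$ is not immediate from the definition of the generalized inverse when $F$ has jumps; one has $\barF(a_n)\le 1/n\le \barF(a_n^-)$, and closing the gap requires $\barF(a_n^-)/\barF(a_n)\to1$, which itself follows from $\barF\in\textrm{RV}_{-\alpha}$ (regular variation with a strictly negative index rules out asymptotically non-negligible jumps). Second, your reason for excluding $x^*<\infty$ is a bit loose as stated: the cleaner route is to observe that for any fixed $x$ with $\Phi_\alpha(x)\in(0,1)$ one needs $F(a_n x + b_n)\to 1$, hence $a_nx+b_n\to x^*$; applying this to two distinct $x$'s would force $a_n\to0$ and a degenerate limit, a contradiction. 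Finally, in the sandwich step, what is actually used is $\barF(a_n)/\barF(a_{n+1})\to 1$ (an immediate consequence of $n\barF(a_n)\to1$), not $a_{n+1}/a_n\to1$; the latter is true but establishing it from the sequential relation alone requires a short subsequence argument and is not strictly necessary for the bound to close.
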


\begin{cor} De Haan (1970) (see \cite{deHaan}, Corollary 2.5.3) \label{cor:mises:dehaan0}~

If $F\in DA(\Lambda_{\infty})$, then
$\displaystyle \lim_{x\rightarrow\infty} \frac{\log\left(\overline{F}(x)\right)}{\log(x)} = -\infty$.
\end{cor}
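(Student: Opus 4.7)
The plan is to conclude by showing $\overline{F}\in\M_\infty$ and then invoking Theorem~\ref{teo:main:001extension}, which translates membership in $\M_\infty$ directly into the desired log-log limit $-\infty$. The key intermediate fact I rely on is the classical characterization that $F\in DA(\Lambda_\infty)$ forces the tail $\overline{F}$ to be rapidly varying with index $-\infty$, meaning $\lim_{x\rightarrow\infty}\overline{F}(tx)/\overline{F}(x)=0$ for every $t>1$. This is standard in de Haan's theory: starting from the representation $\lim_{t\rightarrow\infty}\overline{F}(t+xa(t))/\overline{F}(t)=e^{-x}$ for all $x\in\Rset$, together with the auxiliary property $a(t)=o(t)$ valid when the endpoint is infinite, one gets $tx=t+\bigl((t-1)x/a(t)\bigr)a(t)$ with $(t-1)x/a(t)\rightarrow\infty$, whence $\overline{F}(tx)/\overline{F}(t)\rightarrow 0$.

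Assuming this rapid-variation property, the remaining work is a short monotonicity argument. Fix an arbitrary $\rho>0$. Applying rapid variation with $t=2$, there exists $x_0$ such that $\overline{F}(2x)/\overline{F}(x)\leq 2^{-(\rho+1)}$ for $x\geq x_0$. Iterating along the dyadic sequence $x_n:=2^n x_0$ gives
\[
\overline{F}(x_n)\leq \overline{F}(x_0)\,2^{-n(\rho+1)},
\]
so that $x_n^{\rho}\overline{F}(x_n)\leq \overline{F}(x_0)\,x_0^{\rho}\,2^{-n}\to 0$. Since $\overline{F}$ is non-increasing, the gaps between successive $x_n$ are filled by an elementary sandwich, yielding $x^{\rho}\overline{F}(x)\to 0$, i.e. $\overline{F}(x)/x^{-\rho}\to 0$. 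The case $\rho\leq 0$ is immediate from $\overline{F}(x)\to 0$. This establishes $\overline{F}\in\M_\infty$ in the sense of Definition~\ref{eq:main:defi:classMextension}.

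The first characterization given in Theorem~\ref{teo:main:001extension}, equivalence \eqref{eq:main:000bextension01}, then yields $\log(\overline{F}(x))/\log(x)\to -\infty$, which is the desired conclusion. The main obstacle -- and the only step that is not routine bookkeeping -- is justifying the rapid-variation property of $\overline{F}$ from the $DA(\Lambda_\infty)$ assumption. In the paper this is probably best handled by citing the classical reduction via the auxiliary function $a(t)$ rather than reproving it, keeping the corollary's proof as a clean application of the $\M_\infty$ framework developed in Section~1.
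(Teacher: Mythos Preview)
The paper does not actually prove this corollary: it is quoted verbatim from de~Haan~\cite{deHaan} as an external input, and is then \emph{used} in the proof of Theorem~\ref{teo:20140411:001}\,(ii) to deduce $\overline{F}\in\M_\infty$ from $F\in DA(\Lambda_\infty)$ via Theorem~\ref{teo:main:001extension}. Your route runs in the opposite direction: you first establish $\overline{F}\in\M_\infty$ directly from rapid variation, and then apply Theorem~\ref{teo:main:001extension} to extract the log--log limit. Since Theorem~\ref{teo:main:001extension} is an equivalence, both orderings are legitimate; yours has the merit of making the corollary a genuine consequence of the $\M_\infty$ framework rather than an imported citation, at the cost of having to rederive a piece of de~Haan's classical theory.

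Your argument is essentially correct, but two points deserve tightening. First, the displayed identity ``$tx=t+\bigl((t-1)x/a(t)\bigr)a(t)$'' does not parse as written: the symbols $t$ and $x$ are each playing two different roles between the rapid-variation statement and the Gnedenko--de~Haan representation. Presumably you mean, with $u\to\infty$ the base point and $\lambda>1$ fixed, $\lambda u = u + \bigl((\lambda-1)u/a(u)\bigr)a(u)$. Second, and more substantively, the relation $\overline{F}(u+ya(u))/\overline{F}(u)\to e^{-y}$ holds for \emph{fixed} $y$, so invoking it at $y=(\lambda-1)u/a(u)\to\infty$ requires an extra step. The clean fix uses the monotonicity of $\overline{F}$: for any $M>0$ and $u$ large, $a(u)=o(u)$ gives $\lambda u\geq u+Ma(u)$, hence $\overline{F}(\lambda u)/\overline{F}(u)\leq\overline{F}(u+Ma(u))/\overline{F}(u)\to e^{-M}$; letting $M\to\infty$ yields $\overline{F}(\lambda u)/\overline{F}(u)\to 0$. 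Once rapid variation is secured, your dyadic argument for $x^\rho\overline{F}(x)\to 0$ is clean and correct.
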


\begin{teo} Gnedenko (see \cite{Gnedenko1943}, Theorem 7)  \label{teo:mises:gnedenko:20140412:001}~

The distribution function $F\in DA(\Lambda_{\infty})$ if and only if there exists a continuous function $A$ such that
$A(x)\rightarrow0$ as $x\rightarrow\infty$ and, for all $x\in\Rset$,
\begin{equation}\label{eq:mises:20140412:010bis}
\lim_{z\rightarrow\infty}\frac{1-F(z\,(1+A(z)\,x))}{1-F(z)}=e^{-x}\textrm{}
\end{equation}
\end{teo}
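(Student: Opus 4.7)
The plan is to reduce Gnedenko's multiplicative characterization to the classical additive characterization of $DA(\Lambda)$ (due to de Haan), which asserts that $F \in DA(\Lambda)$ with right endpoint $x^*$ if and only if there exists a positive (continuous) function $a$ on a left-neighbourhood of $x^*$ such that
$$\lim_{t \uparrow x^*} \frac{\barF(t + x\, a(t))}{\barF(t)} = e^{-x}, \qquad \forall x \in \Rset.$$
Given this, everything else is a change of variables together with one analytic observation about the size of the auxiliary function.

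For the forward direction, assume $F \in DA(\Lambda_\infty)$, so that $x^* = \infty$ and the additive form above holds for some continuous positive $a$ on $[x_0, \infty)$. Setting $A(z) := a(z)/z$, the algebraic identity $z(1+A(z)x) = z + x\, a(z)$ turns the additive relation into precisely the multiplicative relation \eqref{eq:mises:20140412:010bis}. The delicate step is showing $A(z) \to 0$ as $z \to \infty$: this is where Corollary \ref{cor:mises:dehaan0} is used, which yields $\log \barF(x)/\log x \to -\infty$. If one had $A(z_n) \to c > 0$ along a subsequence $z_n \to \infty$, taking logarithms in \eqref{eq:mises:20140412:010bis} evaluated at the points $z_n$ would make $\log \barF(z_n(1+cx)) - \log \barF(z_n)$ bounded uniformly in $x$, while the corollary forces $\log \barF$ to decay faster than any multiple of $\log z_n$; comparing the two leads to a contradiction.

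For the converse, given continuous $A$ with $A(z) \to 0$ satisfying \eqref{eq:mises:20140412:010bis}, define $a(z) := z\, A(z)$. Read backwards, the same substitution recovers the additive characterization, whence $F \in DA(\Lambda)$. Since the ratio in \eqref{eq:mises:20140412:010bis} requires $\barF(z)>0$ for all large $z$, the right endpoint is infinite and so $F \in DA(\Lambda_\infty)$. A more concrete variant is to build normalizing sequences directly: pick $b_n$ with $n\,\barF(b_n)\to 1$ and set $a_n := b_n\, A(b_n)$; the hypothesis then gives $n\,\barF(a_n x + b_n)\to e^{-x}$, hence $F^n(a_n x + b_n)\to \exp(-e^{-x}) = \Lambda(x)$.

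The main obstacle is verifying $A(z)\to 0$ in the direct implication; the rest of the argument is pure algebraic substitution combined with the quoted characterizations. This is the single nontrivial step, and it is exactly where the restriction to the subclass $DA(\Lambda_\infty)$ (via the super-polynomial tail decay of Corollary \ref{cor:mises:dehaan0}) is used.
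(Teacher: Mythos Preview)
The paper does not give its own proof of this theorem: it is simply quoted as Gnedenko's classical result (1943, Theorem~7), together with de~Haan's remark that continuity of $A$ is not actually needed. So there is no ``paper's proof'' to compare against; the theorem is recalled as a tool, and is used only in the verification of Example~\ref{exm:20140925:001}.

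Your strategy---reduce the multiplicative form to de~Haan's additive characterization via $A(z)=a(z)/z$---is the natural one, and the converse direction is fine. The forward direction, however, has a genuine gap in the step where you argue $A(z)\to 0$. You invoke Corollary~\ref{cor:mises:dehaan0}, i.e.\ $\log\barF(x)/\log x\to -\infty$, and claim that if $A(z_n)\to c>0$ then ``comparing'' the boundedness of $\log\barF(z_n(1+cx))-\log\barF(z_n)$ with the super-polynomial decay of $\log\barF$ yields a contradiction. But $\log\barF(x)/\log x\to -\infty$ by itself does \emph{not} control the ratio $\barF(\lambda z)/\barF(z)$: writing $\barF(x)=x^{-\omega(x)}$ with $\omega(x)\to\infty$, the difference $\log\barF(z_n(1+cx))-\log\barF(z_n)$ involves $(\omega(z_n)-\omega(z_n(1+cx)))\log z_n$, and nothing in Corollary~\ref{cor:mises:dehaan0} prevents this from compensating the diverging term $\omega(z_n(1+cx))\log(1+cx)$. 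Indeed, the paper's own Example~\ref{exm:20140925:001} exhibits a tail in $\M_\infty$ (hence satisfying the corollary) that fails the Gnedenko condition precisely because such cancellation occurs.

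The fix is to use a stronger consequence of $F\in DA(\Lambda_\infty)$ than Corollary~\ref{cor:mises:dehaan0}: the additive characterization itself forces $\barF$ to be rapidly varying, i.e.\ $\barF(\lambda t)/\barF(t)\to 0$ for every $\lambda>1$. Then if $a(t_n)/t_n\ge\delta>0$ along a subsequence, monotonicity of $\barF$ gives
\[
\frac{\barF((1+\delta)t_n)}{\barF(t_n)}\;\ge\;\frac{\barF(t_n+a(t_n))}{\barF(t_n)}\;\longrightarrow\; e^{-1}>0,
\]
contradicting rapid variation. This is the clean route; your use of Corollary~\ref{cor:mises:dehaan0} is too weak for the job.
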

De Haan (\cite{deHaan1971}) noticed that Gnedenko did not use the continuity of $A$ to prove this theorem. 

These results allow to formulate the next statement.
\begin{teo}\label{teo:20140411:001}~
\begin{itemize}
\item[(i)] $\forall\alpha>0$, $F\in DA(\Phi_{\alpha})\ \Longrightarrow\ \barF\in\M$ with $\M$-index ($- \alpha$), and the converse does not hold:
$$
\displaystyle \{F\in DA(\Phi_{\alpha})\textrm{, }\alpha>0\}\, \subsetneq \;  \{F : ~\overline{F}\in\mathcal{M}\}\textrm{.}
$$
\item[(ii)] $\displaystyle DA(\Lambda_{\infty})\; \subsetneq \;  \{F : ~\overline{F}\in\mathcal{M}_\infty\}$.
\end{itemize}
\end{teo}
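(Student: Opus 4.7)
The plan for both parts splits cleanly into an inclusion and a strictness statement, and in each case follows the same template: translate the domain-of-attraction hypothesis into an asymptotic for $\log(\barF(x))/\log(x)$, then invoke one of the earlier characterization theorems to land in $\M$ or $\M_\infty$.

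For (i), I would start with the forward implication. From $F\in DA(\Phi_\alpha)$, Theorem \ref{teo:mises:dehaanferreira0} gives $\barF\in\mathrm{RV}_{-\alpha}$; Karamata's relation \eqref{eq:kar:001:karamata} then yields $\log(\barF(x))/\log(x)\to -\alpha$, and Theorem \ref{teo:main:001} converts this into $\barF\in\M$ with $\rho_{\barF}=-\alpha$. For the strict inclusion I would recycle the Peter and Paul distribution of \S\ref{subsection2.1.2}: we already showed there that its tail lies in $\M$ with $\M$-index $1$ while failing to be regularly varying, so Theorem \ref{teo:mises:dehaanferreira0} excludes it from every $DA(\Phi_\alpha)$, $\alpha>0$.

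For (ii), the forward direction proceeds identically in spirit: Corollary \ref{cor:mises:dehaan0} gives $\log(\barF(x))/\log(x)\to -\infty$ for any $F\in DA(\Lambda_\infty)$, and Theorem \ref{teo:main:001extension} then places $\barF$ in $\M_\infty$. The strict inclusion requires producing a distribution $G$ whose tail belongs to $\M_\infty$ yet fails Gnedenko's characterization (Theorem \ref{teo:mises:gnedenko:20140412:001}). My plan is to modulate an exponential tail by a bounded oscillatory factor; a concrete candidate is to define $\barG$ as the non-increasing hull on $[x_0,\infty)$ of the function $x\mapsto (2+\cos x)\,e^{-x}$. Since the modulating factor stays between positive constants, the limit $\log(\barG(x))/\log(x)\to -\infty$ survives, so $\barG\in\M_\infty$ by Theorem \ref{teo:main:001extension}, while the oscillation will block any limit of the form $e^{-x}$ in \eqref{eq:mises:20140412:010bis}.

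The main obstacle I anticipate is closing this strictness argument in (ii) rigorously: one must rule out every auxiliary function $A(z)\to 0$ at once, not merely the canonical choice $A(z)=1/z$. I would proceed by contradiction. Specializing \eqref{eq:mises:20140412:010bis} to $x=1$ and $x=-1$ and combining the two constraints forces $zA(z)\to 1$; once this is pinned down, the remaining ratio retains an oscillatory cosine factor, which admits two distinct accumulation values along well-chosen subsequences $z_n\to\infty$, contradicting the assumed pointwise limit. A structural alternative would invoke the von Mises condition (vM2): were it to hold for $G$, then $\barG'/\barG$ would have to stabilize at infinity, a regularity the modulation manifestly destroys.
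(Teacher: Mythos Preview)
Your treatment of the forward implications in (i) and (ii), and of the strict inclusion in (i) via the Peter and Paul distribution, matches the paper's proof essentially verbatim (the paper compresses your Karamata-relation step into a direct appeal to Proposition~\ref{prop:20140329:strictsubset}, but the content is the same).

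For the strict inclusion in (ii) you take a genuinely different route. The paper manufactures its counterexample from the discontinuous tail $\barF(x)=\exp\bigl(-\lfloor x\rfloor\log x\bigr)$ of Example~\ref{exm:20140925:001}, and defeats Gnedenko's criterion \eqref{eq:mises:20140412:010bis} by exploiting that $\lfloor\cdot\rfloor$ cannot be matched by any continuous $A$. Your smooth oscillatory candidate $\barG(x)=(2+\cos x)\,e^{-x}$ is a legitimate alternative (note that this function is already strictly decreasing since $\sin x+\cos x+2\ge 2-\sqrt{2}>0$, so the non-increasing hull is superfluous), and the contradiction via distinct subsequential limits of the cosine ratio is the right endgame. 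Two points in your sketch need tightening, however. First, the constraints at $x=\pm 1$ alone do not force $zA(z)\to 1$: because the cosine factor lies in $[1/3,3]$, they only pin $zA(z)$ to a bounded interval around $1$. To obtain convergence you must either use large $|x|$ (for each fixed $x$ the same bound gives $|zA(z)-1|\le(\log 3+o(1))/|x|$, and letting $x\to\infty$ yields the claim) or pass to a subsequence along which both $zA(z)$ and $z\bmod 2\pi$ converge. Second, your proposed ``structural alternative'' via (vM2) does not work: the von~Mises condition is sufficient but not necessary for $F\in DA(\Lambda_\infty)$, so showing that (vM2) fails for $G$ would not exclude $G$ from the domain of attraction.
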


Let us give some examples illustrating the strict subset inclusions.

\begin{exm}  {\it The Peter and Paul distribution.} 

To show that $DA(\Phi_{\alpha}) \neq  \{F : ~\overline{F}\in\mathcal{M}\textrm{ with $\M$-index ($- \alpha$)}\}$, $\alpha>0$, in (i), it is enough to notice that the Peter and Paul distribution does not belong to $DA(\Phi_1)$, but its associated tail of distribution belongs to $\mathcal{M}$.
\end{exm}

\begin{exm}\label{exm:20140925:001}  

To illustrate (ii), we consider the distribution $F$ defined in a left neighborhood of $\infty$ by
\begin{equation}\label{eq:20140526:002}
F(x):=1-\exp\left(-\lfloor x\rfloor\,\log(x)\right)\textrm{,}
\end{equation}
Then it is straightforward to see that $F \in \{F : ~\overline{F}\in\mathcal{M}_\infty\}$, by Theorem \ref{teo:main:003extension} and the fact that 
$\displaystyle
\lim_{x\rightarrow\infty} \frac{\lfloor x\rfloor\,\log(x)}{\log(x)} = \infty\, .
$

We can check that $F\not \in DA(\Lambda_{\infty})$.
The proof, by contradiction, is given in Appendix \ref{ProofsofSection2.3}.
\end{exm}

\begin{rmq}~

Lemma 2.4.3 in \cite{deHaan} says that if $F\in DA(\Lambda_{\infty})$, then a continuous and increasing distribution function $G$ satisfying
\begin{equation}\label{eq:20140526:001}
\limx\frac{\overline{F}(x)}{\overline{G}(x)}=1\textrm{,}
\end{equation}
exists. Is it possible to extend this result to $\M$? The answer is no. To see that, it is enough to consider Example~\ref{exm:20140925:001}  with $F\in\M \setminus DA(\Lambda_{\infty}) $ defined in \eqref{eq:20140526:002} to see that the De Haan's result does not hold.

Indeed, suppose that for $F$ defined in \eqref{eq:20140526:002}, there exists a continuous and increasing distribution function $G$ satisfying \eqref{eq:20140526:001}, which comes back to suppose that there exits a positive and continuous function $h$ such that $G(x)=1-\exp\left(-h(x)\,\log(x)\right)$ ($x>0$), in particular in a neighborhood of $\infty$. 
So \eqref{eq:20140526:001} may be rewritten as
$$
\limx\frac{\overline{F}(x)}{\overline{G}(x)}=
\limx\exp\left(-\left(\lfloor x\rfloor-h(x)\right)\,\log(x)\right)=
\limx x^{h(x)-\lfloor x\rfloor}=
1\textrm{}
$$
However, since $\lfloor x\rfloor$ cannot be approximated for any continuous function,  the previous limit does not hold.
\end{rmq}

\section{Conclusion}

We introduced a new class of positive functions with support $\Rset^+$, denoted by $\M$, strictly larger than the class of RV functions at infinity.
We  extended to $\M$ some well-known results given on RV class, which are crucial to study extreme events. These new tools allow to expand EVT beyond RV.
This class satisfies a number of algebraic properties and its members $U$ can be characterized by a unique real number, called the $\M$-index $\kappa_U$. 
Four characterizations of $\M$ were provided, one of them being the extension to $\M$ of the well-known Karamata's Theorem restricted to RV class.
Furthermore, the cases $\kappa_U=\infty$ and $\kappa_U=-\infty$ were analyzed and their corresponding classes, denoted by $\M_\infty$ and $\M_{-\infty}$ respectively, were identified and studied, as done for $\M$. The three sets $\M_{\infty}$, $\M_{-\infty}$ and $\M$ are disjoint.
Tails of distributions not belonging to $\M\cup\M_{\pm\infty}$ were proved not to satisfy \PBdH.
Explicit examples of such functions and their generalization were given.

Extensions to $\M$ of the Karamata Theorems were discussed in the second part of the paper.
Moreover, we proved that the sets of tails of distributions whose distributions belong to the domains of attraction of Fr\'echet and Gumbel (with distribution support $\Rset^+$), are strictly included in $\M$ and $\M_\infty$ 
, respectively.

Note that any result obtained here can be applied to functions with finite support, i.e. finite endpoint $x^*$, by using the change of variable $y=1/(x^*-x)$ for $x<x^*$.

After having addressed the probabilistic analysis of  $\M$, we will look  at its statistical one. An interesting question is how to build estimators of the $\M$-index,
which could be used on RV since $\textrm{RV}\subseteq\M$. A companion paper addressing this question is in progress.

Finally, we will  develop a multivariate version of $\M$,  to represent and describe relations among random variables: dependence structure, tail dependence, conditional independence, and asymptotic independence.

\section*{Acknowledgments} 
Meitner Cadena acknowledges the support of SWISS LIFE through its ESSEC research program on 'Consequences of the population ageing on the insurances loss'. Partial support from RARE-318984 (an FP7 Marie Curie IRSES Fellowship) is also kindly acknowledged.



\appendix

\section{Proofs of results given in Section 1}


\subsection{Proofs of results concerning $\M$}

\begin{proof}[Proof of Theorem\,\ref{teo:main:001}]

The sufficient condition given in Theorem\,\ref{teo:main:001} comes from Properties\,\ref{propt:main:001}, (vi). So it remains to prove its necessary condition, namely that 
\begin{equation} \label{teo:main:001:lemma}
\lim_{x\rightarrow\infty}-\frac{\log\left(U(x)\right)}{\log(x)}=-\rho_U
\end{equation}
for $U\in\mathcal{M}$ with finite $\rho_U$ defined in (\ref{Mkappa}).

Let $\epsilon>0$ and define $V$ by
$$
V(x)=\left\{
\begin{array}{ll}
1\text{,} & 0< x<1 \\
x^{\rho_U+\epsilon}\text{,} & x\geq1
\end{array}
\right.
$$
Applying Example \ref{lem:main:003} with $\alpha=\rho_U+\epsilon$ with $\epsilon>0$ implies that $\rho_V=\rho_U+\epsilon$, hence $\rho_V>\rho_U$.
Using Properties \ref{propt:main:001}, (ii), provides then that 
$$
\lim_{x\rightarrow\infty}\frac{U(x)}{V(x)}=\lim_{x\rightarrow\infty}\frac{U(x)}{x^{\rho_U+\epsilon}}=0\textrm{,}
$$
so, for $n\in\Nset^*$, there exists $x_0>1$ such for all $x\geq x_0$, 
$$ 
\frac{U(x)}{x^{\rho_U+\epsilon}}\leq\frac{1}{n}\, , \quad \textit{i.e.}\quad n\,U(x)\leq x^{\rho_U+\epsilon}\textrm{.}
$$
Applying the logarithm function to this last inequality and dividing it by $-\log(x)$, $x \geq x_0$, gives
\begin{eqnarray*}
-\frac{\log(n)}{\log(x)}-\frac{\log(U(x))}{\log(x)} & \geq & -\rho_U-\epsilon\textrm{,}
\end{eqnarray*}
hence
$$
-\frac{\log(U(x))}{\log(x)}\geq-\rho_U-\epsilon 
$$
and then
$$
\liminf_{x\rightarrow\infty}-\frac{\log(U(x))}{\log(x)}\geq-\rho_U-\epsilon\textrm{.}
$$

We consider now the function 
$$
W(x)=\left\{
\begin{array}{ll}
1\text{,} & 0< x<1 \\
x^{\rho_U-\epsilon}\text{,} & x\geq1\textrm{}
\end{array}
\right.
$$
with $\epsilon>0$
and proceed in the same way to obtain that, 
for any $\epsilon>0$, 
$\displaystyle \limsup_{x\rightarrow\infty}-\frac{\log(U(x))}{\log(x)}\leq-\rho_U+\epsilon$.
Hence, $\forall \epsilon>0$, we have
$$
-\rho_U-\epsilon\leq\liminf_{x\rightarrow\infty}-\frac{\log(U(x))}{\log(x)}\leq\limsup_{x\rightarrow\infty}-\frac{\log(U(x))}{\log(x)}\leq-\rho_U+\epsilon
$$
from which the result follows taking $\epsilon$ arbitrary.
\end{proof}

Now we introduce a lemma, on which the proof of Theorem \ref{teo:main:002} will be based.
\begin{lem}\label{lem:main:004}
Let $U\in\mathcal{M}$ with associated $\mathcal{M}$-index $\kappa_U$ defined in (\ref{eq:main:000c}).
Then necessarily $\kappa_U=-\rho_U$, where $\rho_U$ is defined in (\ref{Mkappa}).
\end{lem}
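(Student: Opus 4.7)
The plan is to prove $\kappa_U = -\rho_U$ by a two-sided comparison argument that uses directly the two asymptotic bounds built into the definition of $\mathcal{M}$ in \eqref{Mkappa}. Since $U$ is bounded on finite intervals, the convergence or divergence of $\int_1^{\infty} x^{r-1} U(x)\,dx$ depends only on the behavior of the integrand at infinity, so the whole argument reduces to comparing $U$ with appropriate power functions on a tail $[x_0, \infty)$.

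For the inequality $\kappa_U \geq -\rho_U$, I would fix any $r < -\rho_U$ and choose $\varepsilon > 0$ small enough that $r + \rho_U + \varepsilon < 0$ (for instance $\varepsilon = (-\rho_U - r)/2$). The first limit in \eqref{Mkappa} then yields $U(x) \leq x^{\rho_U + \varepsilon}$ for all sufficiently large $x$, hence $x^{r-1} U(x) \leq x^{r + \rho_U + \varepsilon - 1}$ with an exponent strictly less than $-1$; the tail integral is finite. Every such $r$ therefore belongs to the set whose supremum defines $\kappa_U$ in \eqref{eq:main:000c}, and letting $r \uparrow -\rho_U$ gives the desired bound.

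For the reverse inequality $\kappa_U \leq -\rho_U$, I would symmetrically take any $r > -\rho_U$ and choose $\varepsilon > 0$ with $r + \rho_U - \varepsilon > 0$. The second limit in \eqref{Mkappa} gives $U(x) \geq x^{\rho_U - \varepsilon}$ eventually, so $x^{r-1} U(x) \geq x^{r + \rho_U - \varepsilon - 1}$ with exponent strictly greater than $-1$, forcing the divergence of $\int_1^{\infty} x^{r-1} U(x)\,dx$. Since the set in \eqref{eq:main:000c} is downward closed in $\mathbb{R}$ (larger $r$ makes convergence harder, as $x^{r-1}$ grows with $r$ on $[1,\infty)$), this divergence for every $r > -\rho_U$ forces $\kappa_U \leq -\rho_U$.

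I do not anticipate any real obstacle: the lemma is essentially a direct translation of the asymptotic bounds defining $\mathcal{M}$ into an integrability statement, and the hypothesis that $U$ is bounded on finite intervals trivially disposes of any concern about the contribution of the integrand near $x = 1$. The only mild subtlety is keeping straight that the convergence set is a downward half-line so that the supremum argument in both directions pins $\kappa_U$ to exactly $-\rho_U$.
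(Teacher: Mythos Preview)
Your proof is correct and follows essentially the same approach as the paper's: comparing $U$ with power functions via the bounds built into \eqref{Mkappa} and reducing to the convergence of $\int x^{s-1}\,dx$. If anything, your version is more complete, since the paper only spells out the inequality $\kappa_U \geq -\rho_U$ (using $U(x)\leq x^{\rho_U+\epsilon}$ and letting $\epsilon\downarrow 0$) and then asserts equality, whereas you explicitly carry out the divergence argument for $r>-\rho_U$ as well.
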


\begin{proof}[Proof of Lemma \ref{lem:main:004}] 

Let $U\in\mathcal{M}$ with $\mathcal{M}$-index $\kappa_U$ given in \eqref{eq:main:000c} and $\rho_U$ defined in \eqref{Mkappa}.
By Theorem \ref{teo:main:001}, we have
$
\displaystyle \limx\frac{\log(U(x))}{\log(x)}=\rho_U
$.

Hence, for all $\epsilon>0$ there exists $x_0>1$ such that, for $x\geq x_0$,
$
U(x)\leq x^{\rho_U+\epsilon}
$.

Multiplying this last inequality by $x^{r-1}$, $r\in\Rset$, and integrating it on $[x_0;\infty)$, we obtain

$$
\int_{x_0}^\infty x^{r-1}U(x)dx\leq\int_{x_0}^\infty x^{\rho_U+\epsilon+r-1}dx
$$

which is finite 
if $r<-\rho_U-\epsilon$.
Taking $\epsilon\downarrow0$ then the supremum on $r$ leads to $\kappa_U=-\rho_U$.
\end{proof}


\begin{proof}[Proof of Theorem \ref{teo:main:002}] ~

The necessary condition is proved by Lemma\,\ref{lem:main:004}.
The sufficient condition follows from the assumption that $\rho_U$ satisfies \eqref{Mkappa}.
\end{proof}

\begin{proof}[Proof of Theorem \ref{teo:main:003}]~

\begin{itemize}
\item
\emph{Proof of (i)}
 
For $U\in\mathcal{M}$, Theorems \ref{teo:main:001} and \ref{teo:main:002} give that
\begin{equation}\label{eq:main:teoRepresentation:000}
\lim_{x\rightarrow\infty}-\frac{\log(U(x))}{\log(x)}=-\rho_U=\kappa_U\quad\textrm{with $\rho_U$ defined in (\ref{Mkappa}) and $\kappa_U$ in (\ref{eq:main:000c}).}
\end{equation}

Introducing a function $\gamma$ such that
\begin{equation}\label{eq:main:teoRepresentation:000bis}
\lim_{x\rightarrow\infty} \gamma(x) = 0
\end{equation}
we can write, for some $b>1$, applying the L'H\^{o}pital's rule to the ratio,
\begin{equation}\label{eq:main:teoRepresentation:001}
\lim_{x\rightarrow\infty}\left(\gamma(x)+\frac{\int_b^x\frac{\log(U(t))}{\log(t)}\,\frac{dt}{t}}{\log(x)}\right)=
\lim_{x\rightarrow\infty}\frac{\log(U(x))}{\log(x)}=-\kappa_U\textrm{.}
\end{equation}

\begin{itemize}
\item[$\triangleright$]
Suppose $\kappa_U\neq0$. 
Then we deduce from (\ref{eq:main:teoRepresentation:000}) and (\ref{eq:main:teoRepresentation:001}), that
\begin{equation}\label{eq:teo:main:003:002}
\lim_{x\rightarrow\infty}\frac{\log(U(x))}{\gamma(x)\,\log(x)+\int_b^x\frac{\log(U(t))}{t\log(t)}\,dt}=1\textrm{}
\end{equation}
Hence, defining the function $\displaystyle \epsilon_U(x):=\frac{\log(U(x))}{\gamma(x)\,\log(x)+\int_b^x\frac{\log(U(t))}{t\log(t)}\,dt}$, for $x\ge b$, 
we can express $U$, for $x\ge b$, as
$$
 U(x) = \exp\left\{\alpha_U(x) + \epsilon_U(x)\,\int_b^x\frac{\beta_U(t)}{t}\,dt \right\}
 $$
 \begin{equation}\label{betaU-const}
 \text{where}  \quad \alpha_U(x):=\epsilon_U(x)\,\gamma(x)\,\log(x) \quad \text{and} \quad \beta_U(x):=\frac{\log(U(x))}{\log(x)}.\quad
 \end{equation}
It is then straightforward to
check that 
the functions $\alpha_U$, $\beta_U$ and $\epsilon_U$ satisfy the conditions given in Theorem\,\ref{teo:main:003}.
Indeed, 
by \eqref{eq:main:teoRepresentation:000bis} and \eqref{eq:teo:main:003:002}, 
$
\displaystyle \lim_{x\rightarrow\infty}\frac{\alpha_U(x)}{\log(x)}=\lim_{x\rightarrow\infty}\epsilon_U(x)\,\gamma(x)=0
$. 
Using \eqref{eq:main:teoRepresentation:000}, we obtain
$
\displaystyle \lim_{x\rightarrow\infty}\beta_U(x)=\lim_{x\rightarrow\infty}\frac{\log(U(x))}{\log(x)}=-\kappa_U=\rho_U\textrm{.}
$
Finally, by \eqref{eq:teo:main:003:002}, we have
$
\displaystyle \lim_{x\rightarrow\infty}\epsilon_U(x)=1\,.
$
\item[$\triangleright$] Now suppose $\kappa_U=0$.

We want to prove \eqref{eq:main:000d} for some functions $\alpha$, $\beta$, and $\epsilon$ satisfying \eqref{alpha-eps-beta}.

Notice that \eqref{eq:main:teoRepresentation:000} with $\kappa_U=0$ allows to write  that
$\displaystyle \lim_{x\rightarrow\infty}\frac{\log(x\,U(x))}{\log(x)}=1$.

So applying Theorem\,\ref{teo:main:001} to the function $V$ defined by $V(x)=xU(x)$, gives that $\displaystyle V \in\mathcal{M}$ with $\rho_V=-\kappa_V=1$. 
Since $\kappa_V \neq 0$, we can proceed in the same way as previously, and obtain a representation for $V$ of the form \eqref{eq:main:000d}, namely,  for $d>1$, $\forall x\ge d$, 
$$
V(x)=\exp\left\{\alpha_V(x)+\epsilon_V(x)\,\int_{d}^x\frac{\beta_V(t)}{t}\, dt \right\}
$$
where $\alpha_V$, $\beta_V$, $\epsilon_V$ satisfy the conditions of Theorem\,\ref{teo:main:003}
and $\displaystyle \beta_V=\frac{\log(V(x))}{\log(x)}$ (see \eqref{betaU-const}). 
Hence we have, for $x\ge d$,
\begin{eqnarray*}
U(x)&=& \frac{V(x)}{x}  =\exp\left\{-\log(x)+\alpha_V(x)+\epsilon_V(x)\,\int_{d}^x\frac{\log(t\;U(t))}{t\;\log(t)}\, dt \right\} \\
 & = & \exp\left\{\alpha_V(x)+(\epsilon_V(x)-1)\,\log(x)-\epsilon_V(x)\,\log(d)+\epsilon_V(x)\,\int_{d}^x\frac{\log(U(t))}{t\;\log(t)}dt\right\}\textrm{.}
\end{eqnarray*}
Noticing that
$\displaystyle \lim_{x\rightarrow\infty}\frac{\alpha_V(x)+(\epsilon_V(x)-1)\,\log(x)-\epsilon_V(x)\,\log(d)}{\log(x)}=0 $, 
we obtain that $U$ satisfies \eqref{eq:main:000d} when setting, for $x\ge d$, $\displaystyle \alpha_U(x):=\alpha_V(x)+(\epsilon_V(x)-1)\,\log(x)-\epsilon_V(x)\,\log(d)$, $\displaystyle \beta_U(x):=\frac{\log(U(x))}{\log(x)}$ and  $\displaystyle \epsilon_U:=\epsilon_V$.
\end{itemize}

\item \emph{Proof of (ii)}

Let $U$ be a positive function with support $\Rset^+$, bounded on finite intervals. Assume that $U$ can be expressed as 
 \eqref{eq:main:000d} for some functions $\alpha$, $\beta$, and $\epsilon$ satisfying (\ref{alpha-eps-beta}). 
 We are going to check the sufficient condition given in Properties \ref{propt:main:001}, (vi), to prove that $U\in\mathcal{M}$.

Since
$
\displaystyle \frac{\log(U(x))}{\log(x)}=\frac{\alpha(x)}{\log(x)}+\epsilon(x)\frac{\int_b^x\frac{\beta(t)}{t}dt}{\log(x)}
$
and that, via L'H\^{o}pital's rule,
$$
\lim_{x\rightarrow\infty}\frac{\int_b^x\frac{\beta(t)}{t}dt}{\log(x)}=\lim_{x\rightarrow\infty}\frac{\beta(x)/x}{1/x}=\lim_{x\rightarrow\infty}\beta(x)
$$
then using the limits of $\alpha$, $\beta$, and $\epsilon$ allows to conclude.
\end{itemize}
\end{proof}

\begin{proof}[Proof of Properties \ref{propt:main:001}] ~
\begin{itemize}
\item {\it Proof of (i)}

Let us prove this property by contradiction.

Suppose there exist $\rho$ and $\rho'$, with $\rho'<\rho$, both satisfying \eqref{Mkappa}, for $U\in\mathcal{M}$. 
Choosing $\epsilon=(\rho-\rho')/2$ in \eqref{Mkappa} gives 
$$
\lim_{x\rightarrow\infty}\frac{U(x)}{x^{\rho'+\epsilon}}=0\textrm{\ \ \ and \ \ }\limx\frac{U(x)}{x^{\rho-\epsilon}}=\limx\frac{U(x)}{x^{\rho'+\epsilon}}=\infty\textrm{,}
$$
hence the contradiction.
\item {\it Proof of (ii)}

Choosing $\epsilon=(\rho_U-\rho_V)/2$, we can write
$$
\frac{V(x)}{U(x)}=\frac{V(x)}{x^{\rho_V+\epsilon}}\,\frac{x^{\rho_V+\epsilon}}{U(x)}
=\frac{V(x)}{x^{\rho_V+\epsilon}}\,\left(\frac{U(x)}{x^{\rho_U-\epsilon}}\right)^{-1}
$$
from which we deduce (ii).

\item {\it Proof of (iii)}

Let $U,V\in\M$, $a>0$, $\epsilon>0$ and suppose w.l.o.g. that $\rho_U\leq\rho_V$.

Since $\rho_V-\rho_U>0$, writing
$
\displaystyle \frac{aU(x)}{x^{\rho_V\pm\epsilon}}
$ 
$
\displaystyle =\frac{a}{x^{\rho_V-\rho_U}}\frac{U(x)}{x^{\rho_U\pm\epsilon}}
$
gives
$
\displaystyle \limx\frac{aU(x)+V(x)}{x^{\rho_V+\epsilon}}=0
$
and \newline
$
\displaystyle \limx\frac{aU(x)+V(x)}{x^{\rho_V-\epsilon}}=\infty
$,
we conclude thus that $\rho_{aU+V}=\rho_U\vee\rho_V$.
\item {\it Proof of (iv)}

It is straightforward since \eqref{Mkappa} can be rewritten as
$$
\lim_{x\rightarrow\infty}\frac{1/U(x)}{x^{-\rho_U-\epsilon}}
=\infty\textrm{\ \ \ and \ \ }\lim_{x\rightarrow\infty}\frac{1/U(x)}{x^{-\rho_U+\epsilon}}
=0
\textrm{.}
$$

\item {\it Proof of (v)}

First, let us consider $U\in\M$ with $\rho_U<-1$.

Choosing $\epsilon_0=-(\rho_U+1)/2$ ($>0$) in \eqref{Mkappa} implies that there exist $C>0$ and $x_0>1$ such that, for $x\geq x_0$,
$
\displaystyle U(x)\leq C\,x^{\rho_U+\epsilon_0}=C\,x^{(\rho_U-1)/2}
$,
from which we deduce that
$$
\int_{x_0}^\infty U(x)\,dx<\infty\textrm{.}
$$
We conclude that
$\displaystyle \int_0^\infty U(x)\,dx<\infty$ because $U$ is bounded on finite intervals.

Now suppose that $\rho_U>-1$.

Choosing $\epsilon_0=(\rho_U+1)/2$ ($>0$) in \eqref{Mkappa} gives that for $C>0$ there exists $x_0>1$ such that, for $x\geq x_0$,
$
U(x)\geq C\,x^{(\rho_U-1)/2}
$ 
$\displaystyle \int_0^\infty U(x)\,dx\geq\int_{x_0}^\infty U(x)\,dx\geq\infty$.
\item {\it Proof of (vi)}

Assuming $\displaystyle -\infty<\lim_{x\rightarrow\infty}\frac{\log\left(U(x)\right)}{\log(x)}<\infty $, 
we want to prove that $U$ satisfies \eqref{Mkappa},
which implies that $U\in\M$.

So let us prove \eqref{Mkappa}.

Consider
$
\displaystyle \rho=\lim_{x\rightarrow\infty}\frac{\log\left(U(x)\right)}{\log(x)}\textrm{}
$
well defined under our assumption,  and from which we can deduce that,
\begin{equation*}
\forall \epsilon>0,  \exists x_0>1 \, \text{such that}, \,\forall x\geq x_0,\quad -\frac{\epsilon}{2}\leq\frac{\log\left(U(x)\right)}{\log(x)}-\rho\leq\frac{\epsilon}{2}\textrm{.}
\end{equation*}
Therefore we can write that, for $x\geq x_0$, on one hand,
$$
0\leq \frac{U(x)}{x^{\rho+\epsilon}} 
=\exp\left\{\left(\frac{\log\left(U(x)\right)}{\log(x)}-\rho-\epsilon\right)\log(x)\right\} 
 \le   \exp\left\{-\frac{\epsilon}{2}\log(x)\right\} \underset{x\rightarrow\infty} {\longrightarrow} 0
$$
and on the other hand, 
$$
\frac{U(x)}{x^{\rho-\epsilon}} =\exp\left\{\left(\frac{\log\left(U(x)\right)}{\log(x)}-\rho+\epsilon\right)\log(x)\right\}  \ge \exp\left\{\frac{\epsilon}{2}\log(x)\right\}\underset{x\rightarrow\infty} {\longrightarrow} \infty
$$
hence the result.
\end{itemize}
\end{proof}

\begin{proof}[Proof of Properties \ref{propt:20140630}] ~

Let $U$, $V$ $\in$ $\M$ with $\rho_U$ and $\rho_V$ respectively, defined in \eqref{Mkappa}.
\begin{itemize}
\item {\it Proof of (i)}~

It is immediate since
$$
\limx\frac{\log\left(U(x)\,V(x)\right)}{\log(x)}=
\limx\left(\frac{\log\left(U(x)\right)}{\log(x)}+\frac{\log\left(V(x)\right)}{\log(x)}\right)=\rho_U+\rho_V
$$
\item {\it Proof of (ii)}~

First notice that, since $U, V\in\M$, via Theorems \ref{teo:main:001} and \ref{teo:main:002},
for $\epsilon>0$, there exist $x_U>0$, $x_V>0$, such that, for $x\geq x_0=x_U\vee x_V$,
$$
x^{\rho_U-\epsilon/2}\leq U(x)\leq x^{\rho_U+\epsilon/2}
\quad\textrm{and}\quad
x^{\rho_V-\epsilon/2}\leq V(x)\leq x^{\rho_V+\epsilon/2}\textrm{.}
$$

\begin{itemize}
\item[$\triangleright$]
\emph{Assume $\rho_U \leq \rho_V < -1$}. Hence, via Properties \ref{propt:main:001}, (v), both $U$ and $V$ are integrable on $\Rset^+$.
Choose $\rho=\rho_V$.

Via the change of variable $s=x-t$, we have, $\forall$ $x\geq 2x_0>0$,
\begin{eqnarray*}
\lefteqn{\frac{U\ast V(x)}{x^{\rho+\epsilon}} = \int_0^{x/2} U(t)\frac{V(x-t)}{x^{\rho+\epsilon}}dt+\int_{x/2}^x U(t)\frac{V(x-t)}{x^{\rho+\epsilon}}dt} \\
 & & \le \frac{1}{x^{\epsilon/2}}\int_0^{x/2} U(t)\left(1-\frac{t}{x}\right)^{\rho_V+\epsilon/2}dt
+\frac{1}{x^{\rho_V-\rho_U+\epsilon/2}}\int_0^{x/2} V(s)\left(1-\frac{s}{x}\right)^{\rho_U+\epsilon/2}ds \\
& & \le \frac{\max\left(1,c^{\rho_V+\epsilon/2}\right)}{x^{\epsilon/2}}\int_0^{x/2} U(t)dt+\frac{\max\left(1,c^{\rho_U+\epsilon/2}\right)}{x^{\rho_V-\rho_U+\epsilon/2}}\int_0^{x/2}V(s)ds
\end{eqnarray*}
since, for $0\leq t \leq x/2$, {\it i.e.} $\displaystyle 0<c<\frac{1}{2}\leq1-\frac{t}{x}\leq1$, 
$$
\left(1-\frac{t}{x}\right)^{\rho_V+\epsilon/2}\!\!\! \leq \max\left(1,c^{\rho_V+\epsilon/2}\right) 
\quad\text{and}\quad 
\left(1-\frac{t}{x}\right)^{\rho_U+\epsilon/2} \!\!\! \leq \max\left(1,c^{\rho_U+\epsilon/2}\right).
$$
Hence we obtain, $U$ and $V$ being integrable, and since $\rho_V-\rho_U+\epsilon/2>0$,
$$
\limx\frac{\max\left(1,c^{\rho_V+\epsilon/2}\right)}{x^{\epsilon/2}}\int_0^{x/2} U(t)dt=0
\quad
\text{and}
\quad
\limx\frac{\max\left(1,c^{\rho_U+\epsilon/2}\right)}{x^{\rho_V-\rho_U+\epsilon/2}}\int_0^{x/2}V(s)ds=0,
$$
from which we deduce that, for any $\epsilon>0$, 
$ \displaystyle \limx\frac{U\ast V(x)}{x^{\rho+\epsilon}} =0$.

Applying Fatou's Lemma, then using that  $V\in \M$ with $\rho_V=\rho$, gives, for any $\epsilon$,
$$
\limx\frac{U\ast V(x)}{x^{\rho-\epsilon}} \ge \limx\int_0^1 \!\!\!\! U(t)\frac{V(x-t)}{x^{\rho-\epsilon}}dt  
 \geq  \liminfx\int_0^1\!\!\!\!  U(t)\frac{V(x-t)}{x^{\rho-\epsilon}}dt   \geq  \int_0^1 \!\!\!\! U(t)\limx\left(\frac{V(x-t)}{x^{\rho-\epsilon}}\right)dt =\infty.
$$
We can conclude that $U\ast V\in\M$ with $\rho_{U\ast V}=\rho_V$.

\item[$\triangleright$]
\emph{Assume $\rho_U<-1<0\leq\rho_V$}. Therefore $U$ is integrable on $\Rset^+$, but not $V$ (Properties \ref{propt:main:001}, (v)).
Choose $\rho=\rho_V$.

Using the change of variable $s=x-t$, we have, $\forall$ $x\geq 2x_0>x_0(>0)$,
\begin{eqnarray*}
\lefteqn{\frac{U\ast V(x)}{x^{\rho+\epsilon}} = \int_0^{x-x_0} \!\!\!\! U(t)\frac{V(x-t)}{x^{\rho+\epsilon}}dt+\int_{x-x_0}^x \!\!\!\!\!\!\! U(t)\frac{V(x-t)}{x^{\rho+\epsilon}}dt}  \\
 & & = \int_0^{x-x_0} \!\!\!\! U(t)\frac{V(x-t)}{x^{\rho+\epsilon}}dt+\int_0^{x_0}\!\!\!\!  V(s)\frac{U(x-s)}{x^{\rho+\epsilon}}ds \\
 & & \leq \int_0^{x-x_0} U(t)\frac{(x-t)^{\rho_V+\epsilon/2}}{x^{\rho+\epsilon}}dt+\int_0^{x_0} V(s)\frac{(x-s)^{\rho_U+\epsilon/2}}{x^{\rho+\epsilon}}ds \\
 & & = \frac{1}{x^{\epsilon/2}}\int_0^{x-x_0} U(t)\left(1-\frac{t}{x}\right)^{\rho_V+\epsilon/2} \!\!\!\! dt
 +\frac{1}{x^{\rho_V-\rho_U+\epsilon/2}}\int_0^{x_0}V(s)\left(1-\frac{s}{x}\right)^{\rho_U+\epsilon/2}\!\!\!\! ds\textrm{.}
\end{eqnarray*}
Noticing that for $0\leq t\leq x-x_0$, 
so $\displaystyle  \left(1-\frac{t}{x}\right)^{\rho_V+\epsilon/2}\leq1$,
and for $0\leq s\leq x_0<2x_0\leq x$, $\displaystyle 0< c< \frac{1}{2}\le 1-\frac{x_0}{x}\leq1-\frac{s}{x}\leq1$, 
so $\displaystyle  \left(1-\frac{s}{x}\right)^{\rho_U+\epsilon/2}\leq \max\left(1,c^{\,\rho_U+\epsilon/2}\right)$,
we obtain
$$
\frac{U\ast V(x)}{x^{\rho+\epsilon}} \leq\frac{1}{x^{\epsilon/2}}\int_0^{x-x_0} U(t)dt+\frac{\max\left(1,c^{\,\rho_U+\epsilon/2}\right)}{x^{\rho_V-\rho_U+\epsilon/2}}\int_0^{x_0}V(s)ds\, .
$$
Since $U$ is integrable, $V$ bounded on finite intervals, and $\rho_V-\rho_U+\epsilon/2>0$,  we have
$$
 \limx\frac{1}{x^{\epsilon/2}}\int_0^{x-x_0} U(t)dt=0 \quad \text{and}\quad
 \limx\frac{\max\left(1,c^{\,\rho_U+\epsilon/2}\right)}{x^{\rho_V-\rho_U+\epsilon/2}}\int_0^{x_0}V(t)dt=0.
 $$
therefore, for any $\epsilon>0$, we have $ \displaystyle \limx\frac{U\ast V(x)}{x^{\rho+\epsilon}} =0 $.

Applying Fatou's Lemma, then using that  $V\in \M$ with $\rho_V=\rho$, gives, for any $\epsilon$,
$$
\limx\frac{U\ast V(x)}{x^{\rho-\epsilon}} \ge \limx\int_0^1 \!\!\!\! U(t)\frac{V(x-t)}{x^{\rho-\epsilon}}dt  
 \geq  \liminfx\int_0^1\!\!\!\!  U(t)\frac{V(x-t)}{x^{\rho-\epsilon}}dt   \geq  \int_0^1 \!\!\!\! U(t)\limx\left(\frac{V(x-t)}{x^{\rho-\epsilon}}\right)dt =\infty.
$$
We can conclude that $U\ast V\in\M$ with $\rho_{U\ast V}=\rho_V$.

\item[$\triangleright$]
\emph{Assume $-1<\rho_U\leq\rho_V$}. Then both $U$ and $V$ are not integrable on $\Rset^+$ (Properties \ref{propt:main:001}, (v)).
Choose $\rho=\rho_U+\rho_V+1$.

Let $0<\epsilon<\rho_U+1$.
Since $V$ is not integrable on $\Rset^+$,
we have $\displaystyle \int_0^xV(t)dt  \underset{x\to\infty}{\rightarrow} \infty$.
So we can apply \thelr\ and obtain
$$
\limx\frac{\int_0^xV(t)dt}{x^{\rho_V+1+\epsilon}}=
\limx\frac{\left(\int_0^xV(t)dt\right)'}{\left(x^{\rho_V+1+\epsilon}\right)'}=
\limx\frac{V(x)}{(\rho_V+1+\epsilon)x^{\rho_V+\epsilon}}=0
$$
and
$$
\limx\frac{\int_0^xV(t)dt}{x^{\rho_V+1-\epsilon}}=
\limx\frac{\left(\int_0^xV(t)dt\right)'}{\left(x^{\rho_V+1-\epsilon}\right)'}=
\limx\frac{V(x)}{(\rho_V+1-\epsilon)x^{\rho_V-\epsilon}}=\infty\textrm{,}
$$
from which we deduce that $\displaystyle W_V(x):=\int_0^xV(t)dt\in\M$ with $\M$-index $\rho_V+1$.

We obtain in the same way that $\displaystyle W_U(x):=\int_0^xU(t)dt\in\M$ with $\M$-index $\rho_U+1$.

We have, via the change of variable $s=x-t$, $\forall\ x\geq2 x_0>0$,
\begin{eqnarray*}
\lefteqn{\frac{U\ast V(x)}{x^{\rho+\epsilon}} = \int_0^{x/2} U(t)\frac{V(x-t)}{x^{\rho+\epsilon}}dt+\int_{x/2}^x U(t)\frac{V(x-t)}{x^{\rho+\epsilon}}dt} \nonumber \\
 & & \le \frac{1}{x^{\rho_U+1+\epsilon/2}}\int_0^{x/2} U(t)\left(1-\frac{t}{x}\right)^{\rho_V+\epsilon/2}dt
+\frac{1}{x^{\rho_V+1+\epsilon/2}}\int_0^{x/2} V(s)\left(1-\frac{s}{x}\right)^{\rho_U+\epsilon/2}ds \nonumber \\
& & \le \max\left(1,c^{\rho_V+\epsilon/2}\right)\frac{W_U(x/2)}{x^{\rho_U+1+\epsilon/2}}+\max\left(1,c^{\rho_U+\epsilon/2}\right)\frac{W_V(x/2)}{x^{\rho_V+1+\epsilon/2}}
\end{eqnarray*}
and
\begin{eqnarray*}
\lefteqn{\frac{U\ast V(x)}{x^{\rho-\epsilon}} = \int_0^{x/2} U(t)\frac{V(x-t)}{x^{\rho-\epsilon}}dt+\int_{x/2}^x U(t)\frac{V(x-t)}{x^{\rho-\epsilon}}dt} \nonumber \\
 & & \ge \frac{1}{x^{\rho_U+1-\epsilon/2}}\int_0^{x/2} U(t)\left(1-\frac{t}{x}\right)^{\rho_V-\epsilon/2}dt
+\frac{1}{x^{\rho_V+1-\epsilon/2}}\int_0^{x/2} V(s)\left(1-\frac{s}{x}\right)^{\rho_U-\epsilon/2}ds \nonumber \\
& & \ge \min\left(1,c^{\rho_V-\epsilon/2}\right)\frac{W_U(x/2)}{x^{\rho_U+1-\epsilon/2}}+\min\left(1,c^{\rho_U-\epsilon/2}\right)\frac{W_V(x/2)}{x^{\rho_V+1-\epsilon/2}}
\end{eqnarray*}
since, for $0\leq t \leq x/2$, {\it i.e.} $\displaystyle 0<c<\frac{1}{2}\leq1-\frac{t}{x}\leq1$, 
$$
\min\left(1,c^{\rho_V-\epsilon/2}\right)\leq\left(1-\frac{t}{x}\right)^{\rho_V-\epsilon/2}\!\!\! \leq\left(1-\frac{t}{x}\right)^{\rho_V+\epsilon/2}\!\!\! \leq \max\left(1,c^{\rho_V+\epsilon/2}\right) 
$$
and
$$
\min\left(1,c^{\rho_U-\epsilon/2}\right)\leq\left(1-\frac{t}{x}\right)^{\rho_U-\epsilon/2} \!\!\!\leq\left(1-\frac{t}{x}\right)^{\rho_U+\epsilon/2} \!\!\! \leq \max\left(1,c^{\rho_U+\epsilon/2}\right).
$$
Hence, for any $0<\epsilon<\rho_U+1$, we have $ \displaystyle \limx\frac{U\ast V(x)}{x^{\rho+\epsilon}}=0$ and $ \displaystyle \limx\frac{U\ast V(x)}{x^{\rho-\epsilon}}=\infty$.
We can conclude that $U\ast V\in\M$ with $\rho_{U\ast V}=\rho_U+\rho_V+1$.
\end{itemize}

\item {\it Proof of (iii)}~

It is straightforward, since we can write,  with $y=V(x)\to\infty$ as $x\to\infty$, 
$$
\limx\frac{\log(U(V(x)))}{\log(x)}=
\limy\frac{\log(U(y))}{\log(y)}\times
\limx\frac{\log(V(x))}{\log(x)}=\rho_U\,\rho_V
$$
Hence we obtain $\rho_{U\circ V}=\rho_U\,\rho_V$.
\end{itemize}
\end{proof}


\subsection{Proofs of results concerning $\mathcal{M}_\infty$ and $\mathcal{M}_{-\infty}$}

\begin{proof}[Proof of Theorem\,\ref{teo:main:001extension}]~

%
It is enough to prove \eqref{eq:main:000bextension01}
because by this equivalence and Properties \ref{propt:main:001extension}, (i), one has
$$
U\in\M_{-\infty}\quad\Longleftrightarrow\quad 1/U\in\M_{\infty}\quad\Longleftrightarrow\quad\lim_{x\rightarrow\infty}-\frac{\log\left(1/U(x)\right)}{\log(x)}=\infty
\quad\Longleftrightarrow\quad\lim_{x\rightarrow\infty}-\frac{\log\left(U(x)\right)}{\log(x)}=-\infty\textrm{,}
$$
i.e. \eqref{eq:main:000bextension02}.
\begin{itemize}
\item
Let us prove that
$
\displaystyle U\in\M_\infty\quad\Longrightarrow\quad\lim_{x\rightarrow\infty}\frac{\log\left(U(x)\right)}{\log(x)}=-\infty
$.

Suppose $U\in\M_\infty$.
This implies that for all $\rho\in\Rset$, one has $\displaystyle \limx\frac{U(x)}{x^\rho}=0$, i.e.
for all $\epsilon>0$ there exists $x_0>1$ such that, for $x\geq x_0$,
$
U(x)\leq\epsilon x^\rho
$
which implies
$
\displaystyle
\frac{\log\left(U(x)\right)}{\log(x)}\leq\frac{\log(\epsilon)}{\log(x)}+\rho
$, 
hence
$
\displaystyle \limx\frac{\log\left(U(x)\right)}{\log(x)}\leq\rho
$
and the statement follows since the argument applies for all $\rho\in\Rset$.
\item
Now let us prove that
%
$
\displaystyle \lim_{x\rightarrow\infty}-\frac{\log\left(U(x)\right)}{\log(x)}=\infty\quad\Longrightarrow\quad U\in\M_\infty
$.

For any $\rho\in \Rset$, we can write
$$
\lim_{x\rightarrow\infty}-\frac{\log\left(\frac{U(x)}{x^{\rho}}\right)}{\log(x)}=
\lim_{x\rightarrow\infty}\left(-\frac{\log\left(U(x)\right)}{\log(x)}+\rho\right)=\infty\quad\textrm{under the hypothesis,}
$$
which implies that $U(x)\big/x^{\rho}<1$ and hence $\displaystyle  \lim_{x\rightarrow\infty}\frac{U(x)}{x^{\rho}}=0$.
\end{itemize}
\end{proof}

\begin{proof}[Proof of Theorem \ref{teo:main:002extension}]~

\begin{itemize}
\item\emph{Proof of (i)-(a)}

Suppose $U\in\M_\infty$.
Then, by definition (\ref{M+}), for any $\rho\in\Rset$, 
$
\displaystyle \lim_{x\rightarrow\infty}x^\rho U(x)=0\textrm{}
$, 
which implies that for $c>0$, 
there exists $x_0>1$ such that, for all $x\geq x_0$,
$
\displaystyle U(x)\leq cx^{-\rho}\textrm{,}
$
%
from which we deduce that
$$
\int_{x_0}^{\infty}x^{r-1}U(x)dx\leq c\int_{x_0}^{\infty}x^{r-1-\rho}dx
$$
which is finite whenever $r<\rho$.
This result holds also on $(1;\infty)$ since $U$ is bounded on finite intervals.

Thus we conclude that $\kappa_U=\infty$, $\rho$ being any real number.
\item \emph{Proof of (i)-(b)}

Note that $U$ is integrable on $\Rset^+$ since $\displaystyle \int_1^\infty x^{r-1}U(x)dx<\infty$, for any $r\in\Rset$, in particular for $r=1$. Moreover $U$ is bounded on finite intervals.

For $r>0$, we have, via the continuity of $U$,
$$
\int_0^\infty x^{r+1}dU(x) = (r+1)\int_0^\infty\int_0^xy^{r}dy\, dU(x)=(r+1)\int_0^\infty y^r\left(\int_y^{\infty} dU(x)\right)\, dy
$$
which implies, since $\displaystyle \limx U(x)=0$, that 
\begin{equation}\label{eq:20141106:001}
-\int_0^\infty x^{r+1}dU(x) = (r+1)\int_0^\infty y^{r}U(y)dy,
\end{equation}
which is positive and finite.

Now, for $t>0$, we have, integrating by parts and using again the continuity of $U$,
$$
t^{r+1}U(t)=(r+1)\int_0^tx^{r}U(x)dx+\int_0^tx^{r+1}dU(x)
$$
where the integrals on the right hand side of the equality are finite as $t\to\infty$ and their sum tends to 0 via \eqref{eq:20141106:001}.
This implies that, $\forall r>0$, $t^{r+1}U(t)\to0$ as $t\to\infty$.

For $r\leq0$, we have, for $t\geq1$, using the previous result, $t^{r+1}U(t)\leq t^2U(t)\to0$ as $t\to\infty$.

This completes the proof that $U\in\M_\infty$.
\item \emph{Proof of (ii)-(a)}

Suppose $U\in\M_{-\infty}$.
%
%
Then, by definition (\ref{M-}), for any $\rho\in\Rset$, we have
$
\displaystyle \lim_{x\rightarrow\infty}\frac{U(x)}{x^\rho}=\infty\textrm{}
$, 
which implies that for $c>0$, 
there exists $x_0>1$ such that, for all $x\geq x_0$,
$
\displaystyle U(x)\geq cx^{\rho}\textrm{,}
$
%
from which we deduce that, $U$ being bounded on finite intervals,
$$
\int_{1}^{\infty}x^{r-1}U(x)dx\geq c\int_{x_0}^{\infty}x^{r-1+\rho}dx
$$
which is infinite whenever $r\geq-\rho$.
%

The argument applying for any $\rho$, we conclude that $\kappa_U=-\infty$.
\item\emph{Proof of (ii)-(b)}

Let $r\geq0$. We can write, for $s+2<0$ and $t>1$,
\begin{eqnarray*}
0 & \geq & -\int_{1}^t x^{s+1}d\left(x^rU(x)\right)\qquad (x^rU(x) \, \textrm{being non-decreasing}) \\
 & = & \int_{1}^t\left(\int_x^t d\left(y^{s+1}\right)\, - t^{s+1}\right)d\left(x^rU(x)\right) \\
 & = & \int_1^t y^{s+1}\left( \int_{1}^y d\left(x^rU(x)\right)\right) dy \,-t^{s+1}  \int_{1}^t d\left(x^rU(x)\right)  \\
 & = & \int_1^t y^{s+r+1}U(y)dy\, - \frac{t^{s+2}-1}{s+2} \,U(1)\, - t^{s+1}\left(t^rU(t)-U(1)\right)\quad \textrm{( $U$ being continue)}.
\end{eqnarray*}
Hence we obtain, as $t\to\infty$, $\displaystyle t^{s+r+1}U(t)\to\infty$ since $\displaystyle \int_1^t y^{s+r+1}U(y)dy\to\infty$ and $\displaystyle \frac{t^{s+2}}{s+2}+t^{s+1}\to0$ (under the assumption $s<-2$).\\
This implies that $U\in\M_{-\infty}$ since $s+r+1\in\Rset$.
\end{itemize}
\end{proof}

\begin{proof}[Proof of Remark \ref{rmq:20141107:001} -1]~

Set $\displaystyle A=\int_1^{\infty}e^{-x}dx=e^{-1}$ and let us prove that $U\in\M_\infty$.\\
If $r>0$, then 
\begin{eqnarray*}
\lefteqn{\int_1^\infty x^{r}U(x)dx\leq A+\sum_{n=1}^\infty\int_{n}^{n+1/n^n} x^{r}U(x)dx=A+\sum_{n=1}^\infty\int_{n}^{n+1/n^n} x^{r-1}dx} \\
 & & \leq A+\sum_{n=1}^\infty\int_{n}^{n+1/n^n} x^{\left\lceil r\right\rceil-1}dx
=A+\frac{1}{\left\lceil r\right\rceil}\sum_{n=1}^\infty\left((n+1/n^n)^{\left\lceil r\right\rceil}-n^{\left\lceil r\right\rceil}\right)dx \\
 & & =A+\frac{1}{\left\lceil r\right\rceil}\sum_{n=1}^\infty n^{-(n-1)\left\lceil r\right\rceil-1} \sum_{k=0}^{\left\lceil r\right\rceil-1}(1+1/n^{n-1})^{k}  <\infty \, .
\end{eqnarray*}
If $r\leq0$, then we can write $\displaystyle \int_1^\infty x^{r}U(x)dx \, \leq \, \int_1^\infty xU(x)dx$, which is finite using the previous result with $r=1$.

Now, let us prove $U\not\in\M_{\infty}$ by contradiction.\\
Suppose $U\in\M_{\infty}$. Then Theorem \ref{teo:main:001extension} implies that
$\displaystyle \limx\frac{\log\left(U(x)\right)}{\log(x)}=-\infty$, which contradicts
$$
\limn\frac{\log\left(U(n)\right)}{\log(n)}=
\limn\frac{\log\left(1/n\right)}{\log(n)}=-1>-\infty\textrm{.}
$$
\end{proof}


\begin{proof}[Proof of Theorem \ref{teo:main:003extension}]~
\begin{itemize}
\item \emph{Proof of (i)}

Suppose $U\in\mathcal{M}_\infty$.
By Theorem \ref{teo:main:001extension}, we have
%
%
$
\displaystyle \lim_{x\rightarrow\infty}-\frac{\log(U(x))}{\log(x)}=\infty\textrm{.}
$
It implies that there exists $b>1$ such that, for $x\geq b$,
$
\displaystyle \beta(x):=-\frac{\log(U(x))}{\log(x)}>0\textrm{.}
$
Defining, for $x\geq b$, 
$
\alpha(x):=\beta(x)\,\log(x)\textrm{,}
$
gives (i).
\item \emph{Proof of (ii)}

Suppose $U\in\mathcal{M}_{-\infty}$.
By Properties \ref{propt:main:001extension}, (i), $1/U\in\mathcal{M}_{\infty}$.
Applying the previous result to $1/U$ implies that there exists a positive function $\alpha$ satisfying $\alpha(x)/\log(x)\underset{x\rightarrow\infty}{\rightarrow}\infty$ such that $1/U(x)=\exp(-\alpha(x))$, $x\geq b$ for some $b>1$.
Hence we get $U(x)=\exp(-\alpha(x))$, $x\geq b$, as required.

\item \emph{Proof of (iii)}


Assume that $U$ satisfies, for $x\geq b$,
$
U(x)=\exp(-\alpha(x))
$,
for some $b>1$ and $\alpha$ satisfying $\alpha(x)/\log(x)\underset{x\rightarrow\infty}{\rightarrow}\infty$.
A straightforward computation gives
$
\displaystyle
\lim_{x\rightarrow\infty}-\frac{\log(U(x))}{\log(x)}=
\lim_{x\rightarrow\infty}\frac{\alpha(x)}{\log(x)}=\infty\textrm{.}
$
Hence $U\in\M_\infty$.

We can proceed exactly in the same way when supposing
that $U$ satisfies, for $x\geq b$,
$
U(x)=\exp(\alpha(x))
$
for some $b>1$ and $\alpha$ satisfying $\alpha(x)/\log(x)\underset{x\rightarrow\infty}{\rightarrow}\infty$,
to conclude that
$U\in\M_{-\infty}$.
\end{itemize}
\end{proof}

\begin{proof}[Proof of Properties \ref{propt:main:001extension}]~
\begin{itemize}
\item
\emph{Proof of (i)}

It is straightforward since, for $\rho\in\Rset$, 
$
\displaystyle \lim_{x\rightarrow\infty}\frac{U(x)}{x^{\rho}}=0\textrm{}
\quad\Longleftrightarrow\quad
\lim_{x\rightarrow\infty}\frac{1/U(x)}{x^{-\rho}}=\infty\textrm{.}
$
\item \emph{Proof of (ii)}
\begin{itemize}
\item[$\triangleright$]
Suppose $(U,V)\in\M_{-\infty}\times\M_\fty$ with $\rho_V$ defined in (\ref{Mkappa}).

Let $\epsilon>0$.
Writing
$
\displaystyle
\frac{V(x)}{U(x)}
=\frac{V(x)}{x^{\rho_V+\epsilon}}\,\left(\frac{U(x)}{x^{\rho_V+\epsilon}}\right)^{-1}\textrm{,}
$
we obtain
$
\displaystyle
\lim_{x\rightarrow\infty}\frac{V(x)}{U(x)} 
=0\textrm{}
$
since $V\in\M$ with $\rho_V$ satisfying (\ref{Mkappa}) and $U$ satisfies (\ref{M-}) with $\rho_U=\rho_V+\epsilon\in\Rset$.
\item[$\triangleright$]
Suppose $(U,V)\in\M_{-\infty}\times\M_\infty$.

Let $\rho>0$. We have
$
\displaystyle
\lim_{x\rightarrow\infty}\frac{V(x)}{U(x)} = \lim_{x\rightarrow\infty}\frac{V(x)}{x^{\rho}}\,\left(\frac{U(x)}{x^{\rho}}\right)^{-1}
=0\textrm{}
$
since $V$ satisfies (\ref{M+}) and $U$ 
(\ref{M-}).
\item[$\triangleright$] Suppose $(U,V)\in\M_\fty\times\M_\infty$ with $\rho_U$ defined in (\ref{Mkappa}).

By Properties \ref{propt:main:001}, (iv), and Properties \ref{propt:main:001extension}, (i), we have $(1/U,1/V)\in\M_\fty\times\M_{-\infty}$.
The result follows because
$
\displaystyle
\limx\frac{V(x)}{U(x)}=
\limx\frac{1/U(x)}{1/V(x)}=0
$.
\end{itemize}
\item The proof of (iii) is immediate.
\end{itemize}
\end{proof}

\begin{proof}[Proof of Properties \ref{propt:20140630ext}] ~
Let $U$, $V$ $\in$ $\M$ with $\M$-index $\kappa_U$ and $\kappa_V$ respectively.
\begin{itemize}
\item {\it Proof of (i)}~

It is straightforward as
%
$
\displaystyle
\limx\frac{\log\left(U(x)\,V(x)\right)}{\log(x)}=
\limx\left(\frac{\log\left(U(x)\right)}{\log(x)}+\frac{\log\left(V(x)\right)}{\log(x)}\right)
$.
\item {\it Proof of (ii)}~

We distinguish the next three cases.\\
(a) {\it Let $U\in\M_{\infty}$ and $V\in$ $\M$ with $\rho_V\notin [-1, 0)$.}~

Let $W(x)=x^\eta 1_{(x\ge 1)}+ 1_{(0<x<1)}$,  with $\eta=-2$ if $\rho_V\geq0$, or $\eta=\rho_V-1$ if $\rho_V<-1$.
Note that $W\in\M$ with $\rho_W=\eta < \rho_V$.

By Properties \ref{propt:main:001extension}, (ii), $\displaystyle \limx\frac{U(x)}{W(x)}=0$,
so for $0<\delta<1$, there exists $x_0\geq1$ such that, for all $x\geq x_0$, $U(x)\leq  \delta W(x)$.

Consider $Z$ defined by $Z(x)=U(x)1_{(0<x<x_0)}+ W(x)1_{(x\ge x_0)}$, which satisfies $Z\geq U$ and $Z\in\M$ with $\rho_Z=\rho_W=\eta<\rho_V$.  
Applying Properties \ref{propt:20140630}, (ii), gives $Z\ast  V\in\M$ with $\rho_{Z\ast  V}=\rho_Z \vee \rho_V=\rho_V$ (note that the restriction on $\rho_v$ corresponds to the condition given in Properties \ref{propt:20140630}, (ii)).

We deduce that, for any $x>0$, $U\ast  V(x)\le Z\ast  V(x)$, and, for  $\epsilon>0$,
$$
\frac{U\ast  V(x)}{x^{\rho_V+\epsilon}}\leq
\frac{Z\ast  V(x)}{x^{\rho_V+\epsilon}}\ \underset{x\to\infty}{\rightarrow}\ \ 0\,.
$$
Moreover, applying Fatou's Lemma gives
$$
\limx\frac{U\ast  V(x)}{x^{\rho_V-\epsilon}} \ge \limx\int_0^1 \!\!\!\! U(t)\frac{V(x-t)}{x^{\rho_V-\epsilon}}dt \ge \liminfx\int_0^1  \!\!\!\! U(t)\frac{V(x-t)}{x^{\rho_V-\epsilon}}dt
\geq\int_0^1  \!\!\!\! U(t)\limx\left(\frac{V(x-t)}{x^{\rho_V-\epsilon}}\right)dt=\infty\textrm{.}
$$
Therefore, $U\ast V\in \M$ with $\M$-index $\rho_{U\ast V}=\rho_V$.\\

(b) {\it $(U,V)\in\M_\infty\times\M_\infty$, then $U\ast V\in\M_\infty$}

Let $\rho\in\Rset$.

Consider $U\in\M_\infty$.
We have, applying Theorem \ref{teo:main:001extension},
$
\displaystyle \limx\frac{\log(U(x))}{\log(x)}=-\infty
$.
Rewriting this limit as
$$
\displaystyle \limx\frac{\log(U(x))}{\log(1/x)}=\infty
$$
we deduce that, for $c\geq|\rho|+1>0$, there exists $x_U>1$ such that, for $x\geq x_U$, $\log(U(x))\leq c\log(1/x)$, i.e. $U(x)\leq x^{-c}$.
On $V\in\M_\infty$, a similar reasoning leads to that there exists $x_V>1$ such that, for $x\geq x_V$, $V(x)\leq x^{-c}$.

Using the change of variable $s=x-t$, we have, $\forall\ x\geq2\max(x_U,x_V)>0$,

\begin{eqnarray*}
\lefteqn{\frac{U\ast V(x)}{x^{\rho}} = \int_0^{x/2} U(t)\frac{V(x-t)}{x^{\rho}}dt+\int_{x/2}^x U(t)\frac{V(x-t)}{x^{\rho}}dt} \\
 & & \le \frac{1}{x^{\rho+c}}\int_0^{x/2} U(t)\Big(1-\frac{t}{x}\Big)^{-c}dt
+\frac{1}{x^{\rho+c}}\int_0^{x/2} V(s)\Big(1-\frac{s}{x}\Big)^{-c}ds \\
& & \le \frac{2^c}{x^{\rho+c}}\int_0^{x/2} U(t)dt+\frac{2^c}{x^{\rho+c}}\int_0^{x/2} V(s)ds
\end{eqnarray*}
since, for $0\leq t \leq x/2$, {\it i.e.} $\displaystyle 0<\frac{1}{2}\leq1-\frac{t}{x}\leq1$, 
$
\displaystyle \Big(1-\frac{t}{x}\Big)^{-c}\!\!\! \leq 2^c
$.

This implies, via the integrability of $U$ and $V$, for $\rho\in\Rset$, $\displaystyle \limx\frac{U\ast V(x)}{x^{\rho}}=0$.
Hence $U\ast V\in\M_\infty$.

(c) {\it Let $U\in\M_{-\infty}$ and $V\in$ $\M$ or $\M_{\pm\infty}$.}~

We apply Fatou's Lemma, as in (a), to obtain, for any $\rho\in\Rset$,
$$
\limx\frac{U\ast V(x)}{x^{\rho}}  \ge 
\liminfx\int_0^1  V(t)\frac{U(x-t)}{x^{\rho}}dt
 \geq \int_0^1 V(t)\limx\left(\frac{U(x-t)}{x^{\rho}}\right)dt=\infty\textrm{.}
$$
We conclude that $U\ast V\in\M_{-\infty}$.

\item {\it Proof of (iii)}~

First, note that if $V$ $\in$ $\M_{-\infty}$, then $\displaystyle \limx V(x)=\infty$.
Hence writing
$$
\frac{\log\left(U(V(x))\right)}{\log(x)}=
\frac{\log\left(U(y)\right)}{\log(y)}\,
\times
\frac{\log\left(V(x)\right)}{\log(x)}\textrm{,\quad with $y=V(x)$}
$$
allows to conclude.
\end{itemize}
\vspace{-2ex}
\end{proof}

\subsection{Proofs of results concerning $\Oset$}  \label{ProofsofresultsconcerningOset}

Let us recall the \PBdH, that we will need to prove Theorem \ref{teo:20140801:001}.

Let us define the Generalized Pareto Distribution (GPD)
$$
G_\xi(x)=\left\{
\begin{array}{ll}
(1+\xi x)^{-1/\xi} & \xi\in\Rset\textrm{, }\xi\neq0\textrm{, }1+\xi x>0 \\
 & \\
e^{-x} & \xi=0\textrm{, }x\in\Rset\textrm{.}
\end{array}
\right.
$$
\begin{teo}\label{tpbdh}\PBdH\ (see e.g. Theorem 3.4.5 in \cite{embrechts1997}, \PBdH)~

For $\xi\in\Rset$, the following assertions are equivalent:
\begin{itemize}
\item[(i)]
$F\in DA(\exp(-G_\xi))$
\item[(ii)]
There exists a positive function $a>0$ such that for $1+\xi x>0$,
$$
\lim_{u\rightarrow\infty}\frac{\barF(u+x\,a(u))}{\barF(u)}=G_\xi(x)\textrm{.}
$$
\end{itemize}
\end{teo}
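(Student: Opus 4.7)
The plan is to reduce the statement to known characterizations of the three max-domains of attraction $DA(\Phi_\alpha)$, $DA(\Psi_\alpha)$, and $DA(\Lambda)$ by splitting into cases according to the sign of $\xi$, and in each case exhibiting an explicit auxiliary function $a(\cdot)$ that converts the domain-of-attraction condition into the tail convergence in (ii). The backbone is the convergence-of-types theorem together with the classical facts: $F\in DA(\Phi_\alpha)$ iff $x^*=\infty$ and $\barF\in\mathrm{RV}_{-\alpha}$ (recalled above as Theorem \ref{teo:mises:dehaanferreira0}), an analogous RV characterization for $DA(\Psi_\alpha)$ after inverting about the finite endpoint, and de Haan's characterization of $DA(\Lambda)$ by $\Pi$-variation with an auxiliary function.

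For the direction (i)$\Rightarrow$(ii), I would handle the three cases separately. If $\xi>0$ then $G_\xi=(1+\xi x)^{-1/\xi}$ corresponds to Fr\'echet$_{1/\xi}$, so $\barF\in \mathrm{RV}_{-1/\xi}$ and choosing $a(u)=\xi u$ yields
\begin{equation*}
\frac{\barF(u+xa(u))}{\barF(u)}=\frac{\barF\bigl(u(1+\xi x)\bigr)}{\barF(u)}\underset{u\to\infty}{\longrightarrow}(1+\xi x)^{-1/\xi}=G_\xi(x),
\end{equation*}
for $1+\xi x>0$. If $\xi<0$, then $F$ has a finite right endpoint $x^*$ and $\barF(x^*-1/\cdot)\in\mathrm{RV}_{1/\xi}$; picking $a(u)=-\xi(x^*-u)$ and applying the same RV argument after the change of variables $u\mapsto x^*-1/u$ gives the required limit. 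For $\xi=0$, the von Mises / de Haan characterization of $DA(\Lambda)$ states precisely that there exists a positive function $a$ (the \emph{auxiliary function}, which may be taken as the mean-excess function $a(u)=\int_u^{x^*}\barF(t)\,dt/\barF(u)$) such that $\barF(u+xa(u))/\barF(u)\to e^{-x}=G_0(x)$; so there is nothing further to prove.

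For the converse (ii)$\Rightarrow$(i), I would construct explicit normalizing sequences from $a$ and the tail quantile, and then show that $F^n(a_nx+b_n)\to \exp(-G_\xi(x))$. Setting $b_n:=\barF^{\leftarrow}(1/n)$ and $a_n:=a(b_n)$, the assumption in (ii) applied at $u=b_n$ yields, for $1+\xi x>0$,
\begin{equation*}
n\,\barF(a_nx+b_n)=\frac{\barF(b_n+xa(b_n))}{\barF(b_n)}\cdot n\barF(b_n)\longrightarrow G_\xi(x)\cdot 1,
\end{equation*}
so $F^n(a_nx+b_n)=(1-\barF(a_nx+b_n))^n\to\exp(-G_\xi(x))$, proving $F\in DA(\exp(-G_\xi))$. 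A technical preliminary is to verify that the convergence in (ii) holds \emph{uniformly} on compact sets of $\{x:1+\xi x>0\}$ and that $\barF$ is tail-continuous enough for $b_n$ to be well defined (a right-continuous generalized inverse suffices).

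The main obstacle is the case $\xi=0$: unlike the $\xi\neq 0$ cases, one cannot simply read off $a$ from a power law. It requires showing that the existence of \emph{some} positive auxiliary function $a$ making the ratio converge to $e^{-x}$ is equivalent to $\barF$ lying in de Haan's class $\Pi$, which in turn is equivalent to $F\in DA(\Lambda)$. Establishing this equivalence rigorously is the nontrivial classical step (and is where the original proofs of Balkema--de Haan and Pickands go to work); it rests on showing that $a$ is essentially unique up to asymptotic equivalence and that the limit function, being a multiplicative flow in $u$, must be of exponential type by a Cauchy-type functional equation.
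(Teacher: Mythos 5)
The paper does not prove this theorem: it is stated in Appendix~\ref{ProofsofresultsconcerningOset} purely as a \emph{recalled} classical result, with the citation ``(see e.g.\ Theorem 3.4.5 in \cite{embrechts1997})'', and is used only as an input to the proof of Theorem~\ref{teo:20140801:001}. So there is no in-paper argument against which to compare your attempt; what follows is an assessment of your sketch on its own terms.

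Your outline is essentially the classical route to the result cited (the ``tail ratio'' characterization of the max-domains of attraction), and the case split by the sign of $\xi$, the choice $a(u)=\xi u$ for $\xi>0$ via $\barF\in\textrm{RV}_{-1/\xi}$, and the converse via $b_n=\barF^{\leftarrow}(1/n)$, $a_n=a(b_n)$, $n\barF(a_nx+b_n)\to G_\xi(x)$ are all sound. Three points deserve more care than you give them. First, the statement as written has $u\to\infty$, which is only meaningful when $x^*=\infty$, i.e.\ $\xi\geq 0$; your $\xi<0$ branch silently reinterprets the limit as $u\uparrow x^*$, which is what \cite{embrechts1997} actually does but is not what is on the page, and the paper itself deliberately restricts to $\xi\geq 0$ in the only place it invokes this theorem. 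Second, in the converse direction the factor $n\barF(b_n)\to 1$ is not automatic for a generalized inverse: it must be derived from the hypothesis (ii), which forces $\barF$ to be regularly varying when $\xi>0$ and $\Gamma$-varying (hence tail-continuous) when $\xi=0$; this deduction should be made explicit rather than absorbed into ``a technical preliminary''. Third, you correctly flag the $\xi=0$ case as the genuinely nontrivial one and then defer it wholesale to de~Haan's $\Pi$-variation theory; that is a legitimate citation but leaves this branch of (i)$\Rightarrow$(ii) unproved within your proposal, so the write-up is a proof outline rather than a proof. None of these are wrong turns, but a self-contained argument would need to close them.
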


Note that Theorem \ref{teo:20140801:001} refers to distributions $F$ with endpoint $x^* =\sup\{x:F(x)<1\}=\infty$.

\begin{proof}[Proof of Theorem \ref{teo:20140801:001}:]~

Let us prove this theorem by contradiction,
assuming that $F$ satisfies the \PBdH\ and that $\barF$ satisfies $\mu(\barF)<\nu(\barF)$.
Note that $x^* =\infty$.
We consider the two possibilities given in (i) and (ii) in Theorem \ref{tpbdh}.
\begin{itemize}
\item \emph{Assume that $F$ satisfies Theorem \ref{tpbdh}, (i), with $\xi\geq0$ (because $x^* =\infty$).}

Let $\epsilon>0$.
By Theorem \ref{tpbdh}, (ii), there exists $u_0>0$ such that, for $u\geq u_0$ and $x\geq0$,
\begin{equation}\label{eq:20140803:001}
\frac{\barF(u+x)}{\barF(u)\,G_\xi(x/a(u))}\leq1+\epsilon\textrm{.}
\end{equation}
By the definition of upper order, we have that there exists a sequence $(x_n)_{n\in\Nset}$ satisfying $x_n\rightarrow\infty$ as $n\to\infty$ such that
\begin{eqnarray*}
\lefteqn{\nu(\barF)
=\lim_{x_n\rightarrow\infty}\frac{\log\left(\barF(u+x_n)\right)}{\log(u+x_n)}
=\lim_{x_n\rightarrow\infty}\frac{\log\left(\barF(u+x_n)\right)}{\log(x_n)}} \\
 & & \leq\lim_{x_n\rightarrow\infty}\frac{\log\left((1+\epsilon)\,\barF(u)\,G_\xi(x_n/a(u))\right)}{\log(x_n)}\quad\textrm{by \eqref{eq:20140803:001}} \\
 & & =\lim_{x_n\rightarrow\infty}\frac{\log\left(\barF(u)\right)}{\log(x_n)}+\lim_{x_n\rightarrow\infty}\frac{\log\left(G_\xi(x_n/a(u))\right)}{\log(x_n)} \\
 & & =\left\{
\begin{array}{ll}
-\frac{1}{\xi}\lim_{x_n\rightarrow\infty}\frac{\log\left(1+\xi\,x_n/a(u)\right)}{\log(x_n)} & \textrm{if $\xi>0$} \\
-\lim_{x_n\rightarrow\infty}\frac{x_n/a(u)}{\log(x_n)} & \textrm{if $\xi=0$}
\end{array}
\right. \\
 & & =\left\{
\begin{array}{ll}
-\frac{1}{\xi} & \textrm{if $\xi>0$} \\
-\infty & \textrm{if $\xi=0$.}
\end{array}
\right.\textrm{}
\end{eqnarray*}
If $\xi>0$, we conclude that $\nu(\barF)\leq-1/\xi$. A similar procedure provides $\mu(\barF)\geq-1/\xi$.
Hence we conclude $\mu(\barF)=\nu(\barF)$ which contradicts $\mu(\barF)<\nu(\barF)$.

If $\xi=0$, we conclude that $-\infty\leq\mu(\barF)\leq\nu(\barF)\leq-\infty$.
Hence we conclude $\mu(\barF)=\nu(\barF)=-\infty$ which contradicts $\mu(\barF)<\nu(\barF)$.
\item \emph{Assuming that $F$ satisfies Theorem \ref{tpbdh}, (ii),}
and following the previous proof (done when assuming (i)), we deduce that $\mu(\barF)=\nu(\barF)$ which contradicts $\mu(\barF)<\nu(\barF)$.
\end{itemize}
\end{proof}

\begin{proof}[Proof of Example \ref{exm:20140802:001}]~

Let $x\in[x_n,x_{n+1})$, $n\geq1$.
We can write
\begin{equation}\label{eq:20140610:002}
\frac{\log\left(U(x)\right)}{\log(x)}=\frac{\log\left(x_n^{\alpha(1+\beta)}\right)}{\log(x)}=\alpha(1+\beta)\frac{\log\left(x_n\right)}{\log(x)}\textrm{.}
\end{equation}
Since
$
\log(x_n)\leq\log(x)<\log(x_{n+1})=(1+\alpha)\log(x_n)
$, 
we obtain
$$
\frac{\alpha(1+\beta)}{1+\alpha}  < \frac{\log\left(U(x)\right)}{\log(x)} \leq \alpha(1+\beta)\textrm{,} \quad\textrm{if\quad $1+\beta>0$}
$$
and
$$
\alpha(1+\beta) \leq \frac{\log\left(U(x)\right)}{\log(x)} < \frac{\alpha(1+\beta)}{1+\alpha}\textrm{,} \quad\textrm{if\quad $1+\beta<0$}
$$
from which we deduce
$$
\mu(U)\geq\frac{\alpha(1+\beta)}{1+\alpha}\quad\textrm{and}\quad\nu(U)\leq\alpha(1+\beta)\textrm{,} \quad\textrm{if\quad $1+\beta>0$}
$$
and
$$
\mu(U)\geq\alpha(1+\beta)\quad\textrm{and}\quad\nu(U)\leq\frac{\alpha(1+\beta)}{1+\alpha}\textrm{,} \quad\textrm{if\quad $1+\beta<0$.}
$$
Moreover,  taking $x=x_n$ in \eqref{eq:20140610:002} leads to
$$
\lim_{n\rightarrow\infty}\frac{\log\left(U(x_n)\right)}{\log(x_n)}=\alpha(1+\beta)
$$
which implies
$$
\nu(U)\geq\alpha(1+\beta)\textrm{,} \quad\textrm{if\quad $1+\beta>0$}
$$
and
$$
\mu(U)\leq\alpha(1+\beta)\textrm{,} \quad\textrm{if\quad $1+\beta<0$.}
$$
Hence, to conclude, it remains to prove that
$$
\mu(U)\leq\frac{\alpha(1+\beta)}{1+\alpha}\textrm{,} \quad\textrm{if\quad $1+\beta>0$,}
\quad\textrm{and}\quad
\nu(U)\geq\frac{\alpha(1+\beta)}{1+\alpha}\textrm{,} \quad\textrm{if\quad $1+\beta<0$.}
$$
If $1+\beta>0$, the function $\log\left(U(x)\right)/\log(x)$ is strictly decreasing continuous on $(x_n;x_{n+1})$ reaching the supremum value $\alpha(1+\beta)$ 
and the infimum value $\alpha(1+\beta)/(1+\alpha)$. Hence, for $\delta>0$ such that 
$$
\frac{\alpha(1+\beta)}{1+\alpha}<\frac{\alpha(1+\beta)}{1+\alpha}+\delta<\alpha(1+\beta)\textrm{,}
$$
there exists $x_n<y_n<x_{n+1}$ satisfying
$$
\frac{\log\left(U(y_n)\right)}{\log(y_n)}=\frac{\alpha(1+\beta)}{1+\alpha}+\delta\textrm{.}
$$
Since $y_n\rightarrow\infty$ as $n\rightarrow\infty$ because $x_n\rightarrow\infty$ as $n\rightarrow\infty$, 
$
\displaystyle
\mu(U)\leq\lim_{n\rightarrow\infty}\frac{\log\left(U(y_n)\right)}{\log(y_n)}=\frac{\alpha(1+\beta)}{1+\alpha}+\delta
$
follows.
Hence we conclude $\displaystyle \mu(U)\leq\frac{\alpha(1+\beta)}{1+\alpha}$ since $\delta$ is arbitrary.

If $1+\beta<0$, a similar development to the case $1+\beta>0$ allows proving $\displaystyle \nu(U)\geq\frac{\alpha(1+\beta)}{1+\alpha}$.

Moreover, if $1+\beta<0$ we have that $U$ is a tail of distribution.
Let us check that the rv having a tail of distribution $\barF=U$ has a finite $s$th moment whenver $0\leq s<-\alpha(1+\beta)/(1+\alpha)$.

Let $s\geq 0$. We have
\begin{eqnarray*}
\lefteqn{\int_0^\infty x^sdF(x) = \sum_{n=1}^\infty x_n^s\left(U(x_{n}^-)-U(x_n^+)\right)} \\
 & &  = \sum_{n=2}^\infty x_n^s\left(x_{n-1}^{\alpha(1+\beta)}-x_{n}^{\alpha(1+\beta)}\right)
=\sum_{n=2}^\infty x_n^s\left(x_{n}^{\frac{\alpha(1+\beta)}{1+\alpha}}-x_{n}^{\alpha(1+\beta)}\right)
\leq \sum_{n=2}^\infty x_{n}^{s+\frac{\alpha(1+\beta)}{1+\alpha}}<\infty
\end{eqnarray*}
because $s<-\alpha(1+\beta)/(1+\alpha)$.


Note that if $s\geq-\alpha(1+\beta)/(1+\alpha)$, $\displaystyle \int_0^\infty x^sdF(x)=\infty$.
\end{proof}

\begin{proof}[Proof of Example \ref{exm:20140802:002}]~

If $\alpha>0$, $\nu(U)=\infty$ comes from
$$
\nu(U)=\limsupx\frac{\log\left(U(x)\right)}{\log(x)}\geq\lim_{x_n\rightarrow\infty}\frac{\log\left(U(x_n)\right)}{\log(x_n)}
=\lim_{x_n\rightarrow\infty}\frac{\alpha x_n\,\log(2)}{\log(x_n)}=\infty\textrm{,}
$$
and, if $\alpha<0$, $\mu(U)=-\infty$ comes from
$$
\mu(U)=\liminfx\frac{\log\left(U(x)\right)}{\log(x)}\leq\lim_{x_n\rightarrow\infty}\frac{\log\left(U(x_n)\right)}{\log(x_n)}
=\lim_{x_n\rightarrow\infty}\frac{\alpha x_n\,\log(2)}{\log(x_n)}=-\infty\textrm{.}
$$
Next, let $\epsilon>0$ be small enough. Then, we have, if $\alpha>0$,
\begin{eqnarray*}
\lefteqn{\mu(U)=\liminfx\frac{\log\left(U(x)\right)}{\log(x)}\leq\lim_{x_n\rightarrow\infty}\frac{\log\left(U(x_n-\epsilon)\right)}{\log(x_n-\epsilon)}
} \\
 & & =\lim_{x_n\rightarrow\infty}\frac{\log\left(2^{\alpha x_{n-1}}\right)}{\log(2^{x_{n-1}/c})}\,\frac{\log(2^{x_{n-1}/c})}{\log(2^{x_{n-1}/c}-\epsilon)}
=\lim_{x_n\rightarrow\infty}\frac{\log\left(2^{\alpha x_{n-1}}\right)}{\log(2^{x_{n-1}/c})}=\alpha c\textrm{,}
\end{eqnarray*}
and, if $\alpha<0$,
\begin{eqnarray*}
\lefteqn{\nu(U)=\limsupx\frac{\log\left(U(x)\right)}{\log(x)}\geq\lim_{x_n\rightarrow\infty}\frac{\log\left(U(x_n-\epsilon)\right)}{\log(x_n-\epsilon)}
} \\
 & & =\lim_{x_n\rightarrow\infty}\frac{\log\left(2^{\alpha x_{n-1}}\right)}{\log(2^{x_{n-1}/c})}\,\frac{\log(2^{x_{n-1}/c})}{\log(2^{x_{n-1}/c}-\epsilon)}
=\lim_{x_n\rightarrow\infty}\frac{\log\left(2^{\alpha x_{n-1}}\right)}{\log(2^{x_{n-1}/c})}=\alpha c\textrm{.}
\end{eqnarray*}
It remains to prove that, if $\alpha>0$, $\mu(U)\geq\alpha c$, and, if $\alpha<0$, $\nu(U)\leq\alpha c$.
It follows from the fact that, for $x_n\leq x<x_{n+1}$,
$$
\frac{\log\left(U(x)\right)}{\log(x)}
=\alpha\frac{x_n\,\log\left(2\right)}{\log(x)}
=\alpha c\frac{\log\left(x_{n+1}\right)}{\log(x)}
\left\{
\begin{array}{cll}
> & \alpha c\textrm{,} & \textrm{if $\alpha>0$} \\
< & \alpha c\textrm{,} & \textrm{if $\alpha<0$.}
\end{array}
\right.
$$
Next, if $\alpha<0$ we have that $U$ is a tail of distribution.
Let us check that the rv having a tail of distribution $\barF=U$ has a finite $s$th moment whenever $0\leq s<-\alpha c$.

Let $s>0$ and denote $x_0=0$.  We have
$$
\int_0^\infty x^sdF(x) = \sum_{n=1}^\infty x_n^s\left(U(x_{n}^-)-U(x_n^+)\right)
=\sum_{n=1}^\infty x_n^s\left(2^{\alpha x_{n-1}}-2^{\alpha x_n}\right)
\leq \sum_{n=1}^\infty 2^{(s/c-\alpha) x_{n-1}}<\infty
$$
because $s<-\alpha c$.

If $s=0$, let $\epsilon=-\alpha c/2$ ($>0$) and the statement follows from $\displaystyle \int_0^\infty dF(x)=\int_0^1 dF(x)+\int_1^\infty dF(x)\leq\int_0^1 dF(x)+\int_1^\infty x^\alpha dF(x)<\infty$.

Note that if $s\geq-\alpha c$, $\displaystyle \int_0^\infty x^sdF(x)=\infty$.
\end{proof}

\section{Proofs of results given in Section 2}

\subsection{Section 2.1}  \label{ProofsofSection2.1}

Let us introduce the following functions that will be used in the proofs.

We define, for some $b>0$ and $r\in\Rset$,
\begin{equation}\label{eq:20140405:002}
V_r(x)=\left\{
\begin{array}{ll}
\int_b^xy^rU(y)dy\text{,} & x\geq b \\
1\text{,} & 0< x<b\textrm{}
\end{array}
\right.
\quad\textrm{;}\quad
W_r(x)=\left\{
\begin{array}{ll}
\int_x^{\infty}y^rU(y)dy\text{,} & x\geq b \\
1\text{,} & 0< x<b\textrm{}
\end{array}
\right.
\end{equation}
For the main result, we will need the following lemma which is of interest on its own.
\begin{lem}\label{lem:kar:001}
Let $U\in\mathcal{M}$ with finite $\mathcal{M}$-index $\kappa_U$ and let $b>0$.
\begin{enumerate}
\item[(i)]
Consider $V_r$ defined in (\ref{eq:20140405:002}) with $r+1>\kappa_U$.
Then $V_r\in\mathcal{M}$ and its $\mathcal{M}$-index $\kappa_{V_r}$ satisfies $\kappa_{V_r}=\kappa_U-(r+1)$.
\item[(ii)]
Consider $W_r$ defined in (\ref{eq:20140405:002}) with $r+1<\kappa_U$.
Then $W_r\in\mathcal{M}$ and its $\mathcal{M}$-index $\kappa_{W_r}$ satisfies $\kappa_{W_r}=\kappa_U-(r+1)$.
\end{enumerate}
\end{lem}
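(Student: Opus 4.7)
My plan is to reduce both parts of the lemma to the logarithmic characterization of $\mathcal{M}$ given by Theorem~\ref{teo:main:001}, together with the bridge $\kappa_U=-\rho_U$ supplied by Theorem~\ref{teo:main:002} and the two-sided polynomial envelope on $U$ encoded in the definition \eqref{Mkappa}. Since $U\in\mathcal{M}$ has finite $\mathcal{M}$-index $\kappa_U$, Theorem~\ref{teo:main:002} gives $\rho_U=-\kappa_U\in\Rset$, so for each $\epsilon>0$ there is $x_0=x_0(\epsilon)\ge b$ with
\[
x^{\rho_U-\epsilon}\le U(x)\le x^{\rho_U+\epsilon},\qquad x\ge x_0.
\]
The idea is to plug this envelope into the defining integrals of $V_r$ and $W_r$, evaluate $\log V_r(x)/\log x$ and $\log W_r(x)/\log x$ in the limit $x\to\infty$, and then invoke Theorems~\ref{teo:main:001} and~\ref{teo:main:002} in the other direction to recover both membership in $\mathcal{M}$ and the claimed $\mathcal{M}$-index.

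For part~(i), the hypothesis $r+1>\kappa_U$ rewrites as $r+1+\rho_U>0$, so for $\epsilon$ small enough both exponents $s_\pm:=r+1+\rho_U\pm\epsilon$ are positive. Multiplying the envelope by $y^r$ and integrating from $x_0$ to $x$, with $C:=\int_b^{x_0}y^r U(y)\,dy$,
\[
C+\frac{x^{s_-}-x_0^{s_-}}{s_-}\;\le\; V_r(x)\;\le\; C+\frac{x^{s_+}-x_0^{s_+}}{s_+}.
\]
Taking logarithms, dividing by $\log x$, letting $x\to\infty$ and then $\epsilon\to 0$ yields $\lim_{x\to\infty}\log V_r(x)/\log x = r+1+\rho_U$. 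Theorem~\ref{teo:main:001} then gives $V_r\in\mathcal{M}$ with $\rho_{V_r}=r+1+\rho_U$, and Theorem~\ref{teo:main:002} yields $\kappa_{V_r}=-\rho_{V_r}=\kappa_U-(r+1)$. The remaining prerequisites on $V_r$ (positivity, measurability, boundedness on finite intervals) are immediate from the construction.

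Part~(ii) follows the same template with two sign adjustments. The hypothesis $r+1<\kappa_U$ rewrites as $r+1+\rho_U<0$, so for $\epsilon$ small enough both $s_\pm$ are negative; this both legitimises the definition of $W_r$ on $[b,\infty)$ and gives $\int_x^\infty y^{s_\pm-1}\,dy = x^{s_\pm}/|s_\pm|$. Integrating the envelope against $y^r$ on $[x,\infty)$ produces
\[
\frac{x^{s_-}}{|s_-|}\;\le\; W_r(x)\;\le\; \frac{x^{s_+}}{|s_+|}\qquad (x\ge x_0),
\]
and the same log-and-divide argument yields $\lim_{x\to\infty}\log W_r(x)/\log x = r+1+\rho_U$, whence $W_r\in\mathcal{M}$ with $\rho_{W_r}=r+1+\rho_U$ and $\kappa_{W_r}=\kappa_U-(r+1)$.

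The only real subtlety is bookkeeping on $\epsilon$: it must be chosen small enough that the $\pm\epsilon$ perturbations of the critical exponent $r+1+\rho_U$ stay on the correct side of the relevant threshold (namely $0$ for the definite integrals in part~(i), and $-1$ for the convergence of the integrand at infinity in part~(ii)). Since in each case the strict inequality in the hypothesis leaves positive room, this is painless. A slicker alternative would apply L'H\^opital's rule directly to $\log V_r(x)/\log x$, but that would draw on the $\mathcal{M}$-version of Karamata Theorem~\ref{prop:kar:001}, which itself rests on the present lemma; the direct sandwich argument above avoids this circularity.
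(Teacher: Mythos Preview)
Your sandwich argument via the polynomial envelope $x^{\rho_U-\epsilon}\le U(x)\le x^{\rho_U+\epsilon}$ is correct and complete; plugging it into the integrals defining $V_r$ and $W_r$, computing the limits of $\log/\log$, and then closing with Theorems~\ref{teo:main:001}--\ref{teo:main:002} is a clean route.

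The paper's own proof takes a different path: rather than sandwiching the integrals, it verifies the defining property~\eqref{Mkappa} for $V_r$ and $W_r$ directly by applying L'H\^opital's rule to the ratios $V_r(x)/x^{\rho\pm\epsilon}$ and $W_r(x)/x^{\rho\pm\epsilon}$ themselves (after checking that the indeterminate form $\infty/\infty$, respectively $0/0$, applies). Differentiating brings the ratio back to $U(x)/\big((\rho\pm\epsilon)\,x^{-\kappa_U\pm\epsilon}\big)$, whose limit is $0$ or $\infty$ precisely because $U\in\mathcal M$. So both arguments ultimately rest on the same input (the two limits in~\eqref{Mkappa} for $U$), but the paper reaches~\eqref{Mkappa} for $V_r,W_r$ via differentiation, whereas you reach the equivalent log-characterization via explicit integration of the envelope. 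Your approach has the virtue of producing quantitative bounds on $V_r$ and $W_r$ along the way; the paper's is a touch shorter since it bypasses the explicit antiderivatives.

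One correction to your closing remark: the L'H\^opital alternative is \emph{not} circular. Applying L'H\^opital to $V_r(x)/x^{\rho\pm\epsilon}$ (as the paper does) only uses~\eqref{Mkappa} for $U$, not Theorem~\ref{prop:kar:001}. What would require extra care is applying L'H\^opital to $\log V_r(x)/\log x$, whose derivative ratio $x^{r+1}U(x)/V_r(x)$ does not have an a~priori limit; but that is a different (and avoidable) version of the L'H\^opital route.
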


\begin{proof}[Proof of Theorem \ref{prop:kar:001}] \hfill

\begin{itemize}
\item
\emph{Proof of the necessary condition of (K1$^*$)}

As an immediate consequence of Lemma \ref{lem:kar:001}, (i), we have, assuming that $\rho+r>0$:
\begin{eqnarray*}
\lefteqn{U\in\M\textrm{ with $\M$-index $\kappa_U=-\rho$ such that $(r-1)+1=r>-\rho=\kappa_U$}} \\
 & & \Longrightarrow\quad V_{r-1}(x)=\int_b^xt^{r-1}U(t)dt\in\M\textrm{ with $\M$-index $\kappa_{V_{r-1}}=\kappa_U-r=-\rho-r$}
\end{eqnarray*}
Hence, by applying Theorems \ref{teo:main:001} and \ref{teo:main:002} to $V_{r-1}$, the result follows:
$$
\limx\frac{\log\left(\int_b^xt^{r-1}U(t)dt\right)}{\log(x)}=\limx\frac{\log\left(V_{r-1}(x)\right)}{\log(x)}=-\kappa_{V_{r-1}}=\rho+r>0\textrm{.}
$$
\item
\emph{Proof of the sufficient of (K1$^*$)}

Using ($C1r$) and 
$
\displaystyle
\lim_{x\rightarrow\infty}\frac{\log\left(\int_b^xt^{r-1}U(t)dt\right)}{\log(x)}=\rho+r
$
gives
\begin{eqnarray*}
\lefteqn{\limx-\frac{\log\left(U(x)\right)}{\log(x)} =
\limx-\frac{\log\left(\frac{x^rU(x)}{\int_b^xt^{r-1}U(t)dt}\right)+\log\left(x^{-r}\int_b^xt^{r-1}U(t)dt\right)}{\log(x)}} \\
 & & \qquad\qquad=r+\limx-\frac{\log\left(\int_b^xt^{r-1}U(t)dt\right)}{\log(x)} = r-(\rho+r)=-\rho
\end{eqnarray*}
and the statement follows.
\item
\emph{Proof of the necessary condition of (K2$^*$)}

As an immediate consequence of Lemma \ref{lem:kar:001}, (ii), we have, assuming that $\rho+r<0$:
\begin{eqnarray*}
\lefteqn{U\in\M\textrm{ with $\M$-index $\kappa_U=-\rho$ such that $(r-1)+1=r<-\rho=\kappa_U$}} \\
 & & \Longrightarrow\quad W_{r-1}(x)=\int_x^\infty t^{r-1}U(t)dt\in\M\textrm{ with $\M$-index $\kappa_{W_{r-1}}=\kappa_U-r=-\rho-r$}
\end{eqnarray*}
Hence, by applying Theorems \ref{teo:main:001} and \ref{teo:main:002} to $W_{r-1}$, the result follows:
$$
\limx\frac{\log\left(\int_x^\infty t^{r-1}U(t)dt\right)}{\log(x)}=\limx\frac{\log\left(W_{r-1}(x)\right)}{\log(x)}=-\kappa_{W_{r-1}}=\rho+r<0\textrm{.}
$$
\item
\emph{Proof of the sufficient of (K2$^*$)}

Using ($C2r$) and 
$
\displaystyle
\lim_{x\rightarrow\infty}\frac{\log\left(\int_x^{\infty}t^{r-1}U(t)dt\right)}{\log(x)}=\rho+r
$ gives
\begin{eqnarray*}
\lefteqn{\limx-\frac{\log\left(U(x)\right)}{\log(x)} =
\limx-\frac{\log\left(\frac{x^rU(x)}{\int_x^{\infty}t^{r-1}U(t)dt}\right)+\log\left(x^{-r}\int_x^{\infty}t^{r-1}U(t)dt\right)}{\log(x)}} \\
 & & \qquad\qquad=r+\limx-\frac{\log\left(\int_x^{\infty}t^{r-1}U(t)dt\right)}{\log(x)} = r-(\rho+r)=-\rho
\end{eqnarray*}
and the statement follows.
\item
\emph{Proof of the necessary condition of (K3$^* $); case $\displaystyle \int_b^{\infty}t^{r-1}U(t)dt=\infty$ with $b>1$.}


On one hand, assumed $\rho+r=0$, $U\in\M$ with $\M$-index $\kappa_U=-\rho$ implies, for any $\epsilon>0$,
\begin{equation}\label{eq:20140402:101}
\lim_{x\rightarrow\infty}\frac{U(x)}{x^{\rho+\epsilon}}=0\textrm{\ \ \ and \ \ }\lim_{x\rightarrow\infty}\frac{U(x)}{x^{\rho-\epsilon}}=\infty\textrm{}
\end{equation}
On the other hand, $\displaystyle \int_b^{\infty}t^{r-1}U(t)dt=\infty$ implies
%
$
\displaystyle \lim_{x\rightarrow\infty}\int_b^xt^{r-1}U(t)dt=\infty\textrm{.}
$
Hence we can apply \thelr\, to the first limit of (\ref{eq:20140402:101}) to get, for any $\epsilon>0$,
\begin{equation}\label{eq:20140402:002}
\lim_{x\rightarrow\infty}\frac{\int_b^xt^{r-1}U(t)dt}{x^{\epsilon}}=\lim_{x\rightarrow\infty}\frac{x^{r-1}U(x)}{\epsilon x^{-1+\epsilon}}=
\lim_{x\rightarrow\infty}\frac{U(x)}{\epsilon x^{-r-1+\epsilon}}=\lim_{x\rightarrow\infty}\frac{U(x)}{\epsilon x^{\rho+\epsilon}}=0\textrm{}
\end{equation}
Moreover, we have, for any $\epsilon>0$,
\begin{equation}\label{eq:20140402:003}
\lim_{x\rightarrow\infty}\frac{\int_b^xt^{r-1}U(t)dt}{x^{-\epsilon}}=
\left(\lim_{x\rightarrow\infty}\int_b^xt^{r-1}U(t)dt\right)\,
\left(\lim_{x\rightarrow\infty}x^{\epsilon}\right)
=\infty\times\infty
=\infty\textrm{}
\end{equation}
Defining $V_{r-1}$ as in (\ref{eq:20140405:002}) 
we deduce from (\ref{eq:20140402:002}) and (\ref{eq:20140402:003}) that $V_{r-1}\in\M$ with $\M$-index $0=\rho+r$.
So, 
taking $x\geq b$, the required result follows:
$$
\lim_{x\rightarrow\infty}\frac{\log\left(\int_b^{x}t^{r-1}U(t)dt\right)}{\log(x)}=\lim_{x\rightarrow\infty}\frac{\log\left(V_{r-1}(x)\right)}{\log(x)}=\rho+r=0
$$
\item \emph{Proof of the necessary condition of (K3$^* $); case $\displaystyle \int_b^{\infty}t^{r-1}U(t)dt<\infty$ with $b>1$.}

Suppose $U\in\M\textrm{ with $\M$-index $\kappa_U=-\rho$}$. By a straightforward computation we have
$$
\lim_{x\rightarrow\infty}\frac{\log\left(\int_b^xt^{r-1}U(t)dt\right)}{\log(x)}=
\frac{\log\left(\int_b^{\infty}t^{r-1}U(t)dt\right)}{\lim_{x\rightarrow\infty}\log(x)}=0=\rho+r\textrm{}
$$
\item \emph{Proof of the sufficient condition of (K3$^* $)}

A similar proof used to prove the sufficient condition of (K1$^{* }$).
\end{itemize}
\vspace{-4ex}
\end{proof}

\begin{proof}[Proof of Lemma \ref{lem:kar:001}]~

\begin{itemize}
\item \emph{Proof of (i)}

Let us prove that $V_r$ defined in (\ref{eq:20140405:002}) belongs to $\M$ with $\M$-index $\kappa_{V_r}=\kappa_U-(r+1)$.

Choose 
$\rho=-\kappa_U+r+1>0$ and $0<\epsilon<\rho$.
Note that $x^{\rho\pm\epsilon}\to\infty$ as $\rho\pm\epsilon>0$.

Combining, for $x>1$, under the assumption $r+1>\kappa_U$, and for $U\in\M$,
$$
\lim_{x\rightarrow\infty}V_r(x)=\int_b^1y^rU(y)dy+\int_1^{\infty}y^rU(y)dy=\infty\textrm{,}
$$
and,
$$
\displaystyle
\limx\frac{\left(V_r(x)\right)'}{\left(x^{\rho+\delta}\right)'}
=\limx\frac{U(x)}{(\rho+\delta)x^{-\kappa_U+\delta}}
=\left\{
\begin{array}{ll}
0 & \textrm{if $\delta=\epsilon$} \\
\infty & \textrm{if $\delta=-\epsilon$}
\end{array}
\right.
$$
provides, applying \thelr,
$$
\displaystyle
\limx\frac{V_r(x)}{x^{\rho+\delta}}
=\limx\frac{\left(V_r(x)\right)'}{\left(x^{\rho+\delta}\right)'}
=\left\{
\begin{array}{ll}
0 & \textrm{if $\delta=\epsilon$} \\
\infty & \textrm{if $\delta=-\epsilon$,}
\end{array}
\right.
$$
which implies that $V_r\in\mathcal{M}$ with $\mathcal{M}$-index $\kappa_{V_r}=-\rho=\kappa_U-(r+1)$, as required.
\item \emph{Proof of (ii)}

First let us check that $W_r$ is well-defined.
Let $\delta=(\kappa_U-r-1)/2$ ($>0$ by assumption).
We have, for $U\in\mathcal{M}$,
$
\displaystyle
\lim_{x\rightarrow\infty}\frac{U(x)}{x^{-\kappa_U+\delta}}=0\textrm{,}
$
which implies that for $c>0$ there exists $x_0\geq1$ such that for all $x\geq x_0$, 
$
\displaystyle
\frac{U(x)}{x^{-\kappa_U+\delta}}\leq c\textrm{}
$.

Hence, one has, $\forall$ $x\geq x_0$,
$$
\int_x^{\infty}y^rU(y)dy
\leq c\int_x^{\infty}y^{-\kappa_U+\delta+r}dy
=c\int_x^{\infty}y^{\frac{-\kappa_U+r+1}{2}-1}dy
<\infty
$$
because of $-\kappa_U+r+1<0$.
Then, we can conclude, $U$ being bounded on finite intervals, that $W_r$ is well-defined.

Now choose $\rho=-\kappa_U+r+1<0$ and $0<\epsilon<-\rho$.
We have $x^{\rho\pm\epsilon}\to0$ as $\rho\pm\epsilon<0$.
We will proceed as in (i).

For $x>1$, under the assumption $r+1<\kappa_U$, for $U\in\M$, we have
$
\displaystyle
\lim_{x\rightarrow\infty}W_r(x)=\int_x^\infty y^rU(y)dy=0\textrm{,}
$
and 
$
\displaystyle
\limx\frac{\left(W_r(x)\right)'}{\left(x^{\rho+\delta}\right)'}
=\limx-\frac{U(x)}{(\rho+\delta)x^{-\kappa+\delta}}
=\left\{
\begin{array}{ll}
0 & \textrm{if $\delta=\epsilon$} \\
\infty & \textrm{if $\delta=-\epsilon$}
\end{array}
\right.
$.

Hence applying \thelr\  gives
$$
\displaystyle
\limx\frac{W_r(x)}{x^{\rho+\delta}}
=\limx\frac{\left(W_r(x)\right)'}{\left(x^{\rho+\delta}\right)'}
=\left\{
\begin{array}{ll}
0 & \textrm{if $\delta=\epsilon$} \\
\infty & \textrm{if $\delta=-\epsilon$,}
\end{array}
\right.
$$
which implies that $W_r\in\mathcal{M}$ with $\mathcal{M}$-index $\kappa_{W_r}=-\rho=\kappa_U-(r+1)$.
\end{itemize}
\vspace{-4ex}
\end{proof}

\subsection{Section 2.2} \label{ProofsofSection2.2}

\begin{proof}[Proof of Theorem \ref{teo:KaramatasTauberianTheoremExtension}]~

\vspace{-2ex}

\begin{itemize}
\item\emph{Proof of (i)}

Changing the order of integration in \eqref{eq:20140328:001}, using the continuity of $U$ and the assumption $U(0^+)=0$, give, for $s>0$,
$$
\widehat{U}(s)=s\int_{(0;\infty)}e^{-xs}U(x)dx \, ,
$$
%
or, with the change of variable $y=x/s$,
$$
\widehat{U}\left(\frac{1}{s}\right)=\int_{(0;\infty)}e^{-y}U(sy)dy\textrm{.}
$$
Let $U\in\M$ with $\M$-index $(-\alpha)<0$.
Let $0<\epsilon< \alpha$.

We have, via Theorems \ref{teo:main:001} and \ref{teo:main:002}, that there exists $x_0>1$ such that, for $x\geq x_0$,
$$
x^{\alpha-\epsilon}\leq U(x)\leq x^{\alpha+\epsilon}\textrm{.}
$$
Hence, for $s>1$, we can write
$$
\int_{x_0/s}^{\infty}e^{-x}(xs)^{\alpha-\epsilon}dx\leq\int_{x_0/s}^{\infty}e^{-x}U(xs)dx\leq\int_{x_0/s}^{\infty}e^{-x}(xs)^{\alpha+\epsilon}dx
$$
so
$
\displaystyle
\frac{\int_0^{x_0/s}e^{-x}U(xs)dx+\int_{x_0/s}^{\infty}e^{-x}x^{\alpha-\epsilon}dx}{s^{-\alpha+\epsilon}}\leq\widehat{U}\left(\frac{1}{s}\right)
\leq \frac{\int_0^{x_0/s}e^{-x}U(xs)dx+\int_{x_0/s}^{\infty}e^{-x}x^{\alpha+\epsilon}dx}{s^{-\alpha-\epsilon}}\textrm{}
$,
from which we deduce that
$$
-\alpha-\epsilon\leq\lim_{s\rightarrow\infty}-\frac{\log\left(\widehat{U}(1/s)\right)}{\log(s)}\leq-\alpha+\epsilon\textrm{.}
$$
Then we obtain, $\epsilon$ being arbitrary,
$
\displaystyle
\lim_{s\rightarrow\infty}-\frac{\log\left(\widehat{U}(1/s)\right)}{\log(s)}=-\alpha\textrm{.}
$

The conclusion follows, applying Theorem \ref{teo:main:001}, to get $\widehat{U}\circ g\in\M$ with $g(s)=1/s$, ($s>0$), and, Theorem \ref{teo:main:002}, 
for the $\M$-index.
\item\emph{Proof of (ii)}

Let $0<\epsilon< \alpha$. 

Since we assumed $U(0^+)=0$, we have, for $s>1$,
\begin{equation}\label{eq:20141107:010}
e^{-1}U(s)
 \leq \int_{(0;s)}e^{-\frac{x}{s}}dU(x)
 \leq \int_{(0;\infty)}e^{-\frac{x}{s}}dU(x) = \widehat{U}\left(\frac{1}{s}\right).
\end{equation}
Changing the order of integration in the last integral (on the right hand side of the previous equation), and using the continuity of $U$ and the fact that $U(0^+)=0$, gives, for $s>0$,
\begin{equation}\label{eq:20141107:011}
\widehat{U}\left(\frac{1}{s}\right)=\int_{(0;\infty)}e^{-x}U(sx)dx\textrm{.}
\end{equation}
Set $\displaystyle I_{\eta}=\int_{(0;\infty)}e^{-x}x^\eta dx$, for $\eta\in [0,\alpha)$ (such that $x^{-\eta}U(x)$ concave, by assumption).
Introducing the function $\displaystyle V(x):=I_{\eta}\,(sx)^{-\eta}\,U(sx)$, which is concave, and the rv $Z$ having the probability density function defined on $\Rset^+$ by $\displaystyle e^{-x}x^\eta\big/I_\eta$, we can write
$$
\int_{(0;\infty)}e^{-x}U(sx)dx = s^\eta \int_{(0;\infty)}  V(x)\, \frac{e^{-x}x^\eta}{I_\eta} dx = s^\eta \,E[V(Z)] \le s^\eta \,V\left(E[Z]\right) 
$$
applying Jensen's inequality. 
Hence we obtain, using that $\displaystyle E[Z]=I_{\eta+1}\big/I_{\eta}$ and the definition of $V$, 
$$
\int_{(0;\infty)}e^{-x}U(sx)dx \le \frac{I_{\eta}^{\,\eta+1}}{I_{\eta +1}^{\,\eta}}\,U\left(s\,I_{\eta+1}\big/I_{\eta}\right),
$$
from which we deduce, using \eqref{eq:20141107:011}, that
\begin{equation*} 
\frac1{s^{\alpha-\epsilon}}\widehat{U}\left(\frac{1}{s}\right) \le \frac{I_{\eta}^{\,\eta+1-\alpha+\epsilon}}{I_{\eta +1}^{\,\eta-\alpha+\epsilon}}\times \frac{U\left(s\,I_{\eta+1}\big/I_{\eta}\right)}{\left(s\,I_{\eta+1}\big/I_{\eta} \right)^{\alpha-\epsilon}} \,.
\end{equation*}
Therefore, since $\widehat{U}\circ g\in\M$ with $g(s)=1/s$ and $\M$-index ($-\alpha$), we obtain
$$ 
\frac{I_{\eta}^{\,\eta+1-\alpha+\epsilon}}{I_{\eta +1}^{\,\eta-\alpha+\epsilon}}\times \frac{U\left(s\,I_{\eta+1}\big/I_{\eta}\right)}{\left(s\,I_{\eta+1}\big/I_{\eta} \right)^{\alpha-\epsilon}}
\, \underset{s\to\infty}{\longrightarrow}\, \infty \, .
$$
But $\widehat{U}\circ g\in\M$ with $\M$-index ($-\alpha$) also implies in  \eqref{eq:20141107:010} that $\displaystyle \frac{e^{-1}U(s)}{s^{\alpha + \epsilon}}\, \underset{s\to\infty}{\longrightarrow} \, 0$.\\
From these last two limits, we obtain that $U\in\M$ with $\M$-index $(-\alpha)$.
\end{itemize}
\vspace{-4ex}
\end{proof}

\subsection{Section \ref{Section2.3}} \label{ProofsofSection2.3}

\vspace{-2ex}
\begin{proof}[Proof of Proposition \ref{prop:main:vonmises:001}]~

\vspace{-2ex}

\begin{itemize}
\item \emph{Proof of (i)}

Suppose that $F$ satisfies 
$
\displaystyle \lim_{x\rightarrow\infty}\frac{x\,F'(x)}{\overline{F}(x)}=\alpha
$.
Applying the L'H\^{o}pital's rule gives
%
$$
\lim_{x\rightarrow\infty}\frac{x\,F'(x)}{\overline{F}(x)}=
\lim_{x\rightarrow\infty}-\frac{\left(\log\left(\overline{F}(x)\right)\right)'}{\left(\log(x)\right)'}=
\lim_{x\rightarrow\infty}-\frac{\log\left(\overline{F}(x)\right)}{\log(x)}=
\frac{1}{\alpha}\textrm{,}
$$
hence $\overline{F}\in\M$, via Theorem \ref{teo:main:001}, with $\M$-index $\kappa_{\barF}=1/\alpha$, via Theorem \ref{teo:main:002}.
\item
\emph{Proof of (ii)}

Suppose that $F$ satisfies $\displaystyle \limx\left(\frac{\overline{F}(x)}{F'(x)}\right)'$=0.
It implies that, for all $\epsilon>0$, there exists $x_0>0$ such that, for $x\geq x_0$,
$
\displaystyle
-\epsilon\leq\left(\frac{\overline{F}(x)}{F'(x)}\right)'\leq\epsilon\textrm{.}
$

Integrating this inequality on $[x_0,x]$ gives
$$
-\epsilon(x-x_0)\leq\left(\frac{\overline{F}(x)}{F'(x)}\right)-\left(\frac{\overline{F}(x_0)}{F'(x_0)}\right)\leq\epsilon(x-x_0)\textrm{}
$$
from which we deduce
%
$
\displaystyle -\epsilon\leq\lim_{x\rightarrow\infty}\frac{\overline{F}(x)}{xF'(x)}\leq\epsilon\textrm{,}
$
hence 
%
$
\displaystyle
\lim_{x\rightarrow\infty}\frac{\overline{F}(x)}{xF'(x)}=0\textrm{.}
$
$$
\lim_{x\rightarrow\infty}\frac{x\,F'(x)}{\overline{F}(x)}=
\lim_{x\rightarrow\infty}-\frac{\left(\log\left(\overline{F}(x)\right)\right)'}{\left(\log(x)\right)'}=
\lim_{x\rightarrow\infty}-\frac{\log\left(\overline{F}(x)\right)}{\log(x)}=
\frac{1}{0}=\infty\textrm{,}
$$
since $F'(x)>0$ as $x\to\infty$.

We conclude that $\overline{F}\in\M_\infty$, via Theorem \ref{teo:main:001extension}.
\end{itemize}
\vspace{-2ex}
\end{proof}

\vspace{-4ex}

\begin{proof}[Proof of Theorem \ref{teo:20140411:001}]~


\begin{itemize}
\item
Let $F\in DA(\Phi_\alpha)$, $\alpha>0$.
Then Theorem \ref{teo:mises:dehaanferreira0} and Proposition \ref{prop:20140329:strictsubset} imply that $\overline{F}\in RV_{-\alpha}\subseteq\M$ with $\M$-index $\kappa_{\barF}=-\alpha$.
\item
Assume $F\in DA(\Lambda_{\infty})$.
Applying Corollary \ref{cor:mises:dehaan0} gives
$
\displaystyle \lim_{x\rightarrow\infty}-\frac{\log\left(\overline{F}(x)\right)}{\log(x)}=\infty
$.
Theorem \ref{teo:main:001extension} allows to conclude. 
\end{itemize}
\vspace{-2ex}
\end{proof}

\vspace{-4ex}

\begin{proof}[Proof of Example \ref{exm:20140925:001}]~

Let us check that $F\not \in DA(\Lambda_{\infty})$. 

We prove it by contradiction.

Suppose that $F$ defined in \eqref{eq:20140526:002} belongs to $DA(\Lambda_{\infty})$.
By applying Theorem \ref{teo:mises:gnedenko:20140412:001}, we conclude that there exists a function $A$ such that $A(x)\rightarrow0$ as $x\rightarrow\infty$ and \eqref{eq:mises:20140412:010bis} holds.

Introducing the definition \eqref{eq:20140526:002} into \eqref{eq:mises:20140412:010bis}, we can write, for all $x\in\Rset$,
\begin{eqnarray}
\lefteqn{\lim_{z\rightarrow\infty}\Big(\lfloor z\,(1+A(z)\,x)\rfloor\,\log\big(z\,(1+A(z)\,x)\big)-\lfloor z\rfloor\,\log(z)\Big)} \nonumber \\
 & & =\lim_{z\rightarrow\infty}\Big(\big(\lfloor z\,(1+A(z)\,x)\rfloor-\lfloor z\rfloor\big)\log\left(z\right)+\lfloor z\,(1+A(z)\,x)\rfloor\,\log\big(1+A(z)\,x\big)\Big)=x\textrm{}
\label{eq:20140413:001}
\end{eqnarray}
Let us see that the assumption of the existence of such function $A$ leads to a contradiction when considering some values $x$.
\begin{itemize}
\item Suppose $\displaystyle \lim_{z\to\infty} z\,A(z)=c>0$.

Take $x>0$ such that $cx/2\geq1$ and $z$ large enough such that $z\,A(z)\geq c/2$.

On one hand, we have
$
\lfloor z\,(1+A(z)\,x)\rfloor-\lfloor z\rfloor>0
$
since
$
z\,(1+A(z)\,x)
\geq
z+cx/2
\geq
z+1
$.
This implies that
$$
\lim_{z\rightarrow\infty}\big(\lfloor z\,(1+A(z)\,x)\rfloor-\lfloor z\rfloor\big)\log\left(z\right)=\infty\textrm{.}
$$
On the other hand, we have, taking $z$ large enough to have $\log\big(1+A(z)\,x\big)\approx A(z)\,x$ and $z\,A(z)\leq 2c$,
\begin{eqnarray*}
\lefteqn{\lfloor z\,(1+A(z)\,x)\rfloor\,\log\big(1+A(z)\,x\big)} \\
 & & \leq
z\,(1+A(z)\,x)\,\log\big(1+A(z)\,x\big)
\approx
z\,(1+A(z)\,x)\,A(z)\,x
\leq
2\,c\,(1+A(z)\,x)\,x<\infty
\end{eqnarray*}
Combining these results and taking $z\to\infty$ contradict \eqref{eq:20140413:001}.

\item Suppose $\displaystyle \lim_{z\to\infty} z\,A(z)=0$.

Let $x>0$.

On one hand, we have that
$$
\lim_{z\rightarrow\infty}\big(\lfloor z\,(1+A(z)\,x)\rfloor-\lfloor z\rfloor\big)\log\left(z\right)
$$
could be 0 or $\infty$ depending on the behavior of $z\,A(z)$ as $z\to\infty$.

On the other hand, we have, taking $z$ large enough such that $\log\big(1+A(z)\,x\big)\approx A(z)\,x$,
\begin{eqnarray*}
\lefteqn{\lfloor z\,(1+A(z)\,x)\rfloor\,\log\big(1+A(z)\,x\big)} \\
 & & \leq
z\,(1+A(z)\,x)\,\log\big(1+A(z)\,x\big)
\approx
z\,(1+A(z)\,x)\,A(z)\,x
\to0
\quad\textrm{as}
\quad
z\to\infty\textrm{.}
\end{eqnarray*}
Combining these results contradicts \eqref{eq:20140413:001}.
\end{itemize}
\end{proof}

\end{document}